\documentclass[12pt, a4paper]{amsart}
\usepackage[margin=2.5cm]{geometry}

\usepackage{amstext,amssymb,amsmath,amsbsy}
\usepackage{amsthm}
\usepackage{setspace}
\usepackage[pageanchor]{hyperref}
\usepackage{indentfirst}
\usepackage{verbatim}
\usepackage{parskip}
\usepackage{enumerate}
\usepackage{graphicx}
\usepackage[dvipsnames]{xcolor}
\usepackage[OT1]{fontenc}
\usepackage[latin1]{inputenc}
\usepackage[english]{babel}

\newtheorem{theorem}{Theorem}[section]
\newtheorem{lemma}[theorem]{Lemma}

\newtheorem{proposition}[theorem]{Proposition}
\newtheorem{definition}[theorem]{Definition}

\newtheorem{remark}[theorem]{Remark}

\usepackage{mathtools}
\usepackage[ruled]{algorithm2e}
\usepackage[nameinlink]{cleveref}
\Crefname{algocf}{Algorithm}{Algorithms}

\begin{document}
\title[Non-Cartesian polynomials in product spaces]{On the Structure and Generic Non-Cartesianity of Polynomials in Product Spaces}
\date{}

\author[Shen]{Chun-Yen Shen}
\address{Department of Mathematics, National Taiwan University, Taiwan}
\email{cyshen@math.ntu.edu.tw}

\author[Truong]{Tuyen Trung Truong}
\address{Department of Mathematics, University of Oslo, Norway}
\email{tuyentt@math.uio.no}

\author[Yu]{Wei-Hsuan Yu}
\address{Department of Mathematics, National Central University, Taiwan}
\email{whyu@math.ncu.edu.tw}

\begin{abstract}
We develop a general theory of Cartesian and non-Cartesian polynomials on products of complex spaces 
$\mathbb{C}^{n_1} \times \cdots \times \mathbb{C}^{n_k}$. 
We prove that, for any fixed degree $d \ge 2$, a (Zariski) generic polynomial is non-Cartesian in a broad range of dimensions, establishing that Cartesian structure is highly exceptional.

We further introduce effective sufficient criteria for a polynomial to be non-Cartesian. Moreover, we  show that being (non)-Catersian can be decided algorithmically via Gr\"obner basis methods and quantitative forms of Hilbert's Nullstellensatz.

As an application, we connect the non-Cartesian condition to incidence geometry, obtaining sharp intersection bounds and constructing extremal configurations that demonstrate the optimality of these estimates.
\end{abstract}

\maketitle

\section{Introduction}

The study of zero sets of polynomials lies at the heart of many areas of mathematics, including complex analysis, algebraic geometry, dynamical systems, and combinatorics. From both geometric and analytic perspectives, polynomial zero sets encode deep structural information and frequently serve as the fundamental objects governing a problem. Foundational tools from commutative algebra and algebraic geometry, such as intersection theory and Gr\"obner bases, play an essential role in understanding these objects and their interactions \cite{Hartshorne,CoxLittleOShea,Eisenbud}.

While these questions are inherently continuous in nature, many applications require understanding how algebraic varieties interact with discrete sets. In particular, one is often led to study incidences between the zero set of a polynomial and Cartesian products or other finite configurations. Such problems arise naturally in combinatorial geometry, additive combinatorics, and related areas, where one seeks quantitative bounds on the size of intersections between algebraic sets and discrete structures. Classical incidence results of Pach and Sharir \cite{PachSharir}, together with later developments based on polynomial methods and partitioning techniques \cite{GuthKatz,KaplanMatousekSharir}, have played a fundamental role in shaping this subject. Related questions concerning incidences on Cartesian products and algebraic degeneracies have also been extensively studied in the context of the Elekes--Szab\'o theory and its later developments \cite{RazSharirZeeuw,SolymosiZeeuw}.

A central theme in modern mathematics is that algebraic structure strongly influences combinatorial growth. This philosophy appears prominently in the study of the sum--product phenomenon and its many geometric and arithmetic manifestations \cite{TaoVu}. Broadly speaking, highly structured or algebraically degenerate objects tend to admit unexpectedly large intersections and concentration phenomena, whereas generic or non-degenerate objects exhibit strong expansion properties and satisfy nontrivial incidence bounds. Important early manifestations of this principle appear in the work of Elekes and R\'onyai \cite{ElekesRonyai}, as well as in later developments concerning rational functions and sum--product estimates \cite{BukhTsimerman}. Closely related higher-dimensional incidence phenomena have also been investigated in a variety of settings; see for example \cite{SolymosiTao,SharirSolomon,Zahl,SharirZahl}.

One particularly striking manifestation of this principle is the dichotomy between Cartesian and non-Cartesian polynomials on $\mathbb{C}^2\times \mathbb{C}^2$ \cite{MojarradPhamValculescuZeeuw}, recalled in Definition 1.1 below. Cartesian structure represents a special algebraic degeneracy that permits large intersections with product sets, while non-Cartesian behavior forces substantially stronger incidence estimates. Understanding this distinction has become increasingly important in recent developments in incidence geometry and additive combinatorics, where it serves as a precise algebraic mechanism governing extremal configurations and sharp bounds.


\begin{definition}
A polynomial $P(x,y,z,t) \in \mathbb{C}[x,y,z,t]$ is said to be \emph{Cartesian} if it can be written in the form
\[
P(x,y,z,t)
= G(x,y)\,F_1(x,y,z,t) + H(z,t)\,F_2(x,y,z,t),
\]
where $G(x,y)$ and $H(z,t)$ are non-constant polynomials, and $F_1, F_2 \in \mathbb{C}[x,y,z,t]$.
\end{definition}

\begin{remark}
Since any polynomial in the variables $(x,y)$ (and similarly in $(z,t)$) factors into irreducible components in $\mathbb{C}[x,y,z,t]$ that still depend only on $(x,y)$, we may, without loss of generality, assume that both $G(x,y)$ and $H(z,t)$ are irreducible.
\end{remark}

The first question which would come to mind is: Can we design an algorithm to determine whether a given polynomial
\[
P(x,y,z,t) \in \mathbb{C}[x,y,z,t]
\]
is Cartesian? This will be addressed in Section 2. 

The notion of Cartesian polynomials can be extended to higher dimensions, please see later for detail. On the other hand, in lower dimensions the notion is not very useful. For example, we now show that all non-constant polynomials in dimension $2$ are Cartesian. Here, a polynomial $P(x,y) \in \mathbb{C}[x,y]$ is said to be \emph{Cartesian} if it can be written in the form
\[
P(x,y)
= G_1(x)\,H_1(x,y) + G_2(y)\,H_2(x,y),
\]
where $G_1(x)$ and $G_2(y)$ are non-constant polynomials.

\begin{lemma}
If $P(x,y)$ is non-constant, then it is Cartesian.
\end{lemma}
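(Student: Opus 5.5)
The plan is to produce the decomposition explicitly by a Taylor-type expansion around a point on the zero set. First, since $P$ is non-constant over the algebraically closed field $\mathbb{C}$, its zero set in $\mathbb{C}^2$ is nonempty. Indeed, if $P$ has positive degree in $y$, fix $x=a$ outside the finitely many roots of the leading coefficient in $y$; then $P(a,y)$ is a non-constant polynomial in $y$ and has a root $b$. If $P$ has degree $0$ in $y$, then $P=P(x)$ is a non-constant polynomial in $x$ and has a root. In either case we get $(a,b)\in\mathbb{C}^2$ with $P(a,b)=0$.

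Next, write
\[
P(x,y) = \bigl(P(x,y)-P(a,y)\bigr) + \bigl(P(a,y)-P(a,b)\bigr),
\]
using $P(a,b)=0$. The first bracket vanishes when $x=a$, so it is divisible by $x-a$ in $\mathbb{C}[x,y]$; this follows by viewing it as a polynomial in $x$ over $\mathbb{C}[y]$ and applying the factor theorem. Hence $P(x,y)-P(a,y)=(x-a)H_1(x,y)$. Similarly $P(a,y)-P(a,b)=(y-b)H_2(y)$. Setting $G_1(x)=x-a$ and $G_2(y)=y-b$, both of which are non-constant, gives
\[
P(x,y)=G_1(x)H_1(x,y)+G_2(y)H_2(x,y),
\]
so $P$ is Cartesian.

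There is no real obstacle. The only point needing care is the existence of a zero of $P$. This is where non-constancy and algebraic closedness enter: a nonzero constant is not in the ideal $(x-a,y-b)$ for any $(a,b)$, so the hypothesis is necessary. Equivalently, the argument shows that $P$ lies in the maximal ideal $(x-a,y-b)$, which is exactly the Cartesian condition with linear $G_1,G_2$.
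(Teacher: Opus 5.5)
Your proof is correct, but it takes a different route from the paper's. The paper never looks for a zero of $P$. It assumes $P$ involves $x$, picks a generic $\alpha$, and writes $P=(x-\alpha)H_1+P(\alpha,y)$, so that $G_2(y)=P(\alpha,y)$ and $H_2=1$; genericity of $\alpha$ is meant to make $G_2$ non-constant. You instead take a point $(a,b)$ with $P(a,b)=0$ and split off the slice $P(a,y)$ once more. This proves the stronger statement $P\in(x-a,\,y-b)$, so linear $G_1,G_2$ always suffice.

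Your route buys uniformity. The paper's construction only works when $P$ genuinely involves both variables: for $P\in\mathbb{C}[x]$ (e.g.\ $P=x$), $P(\alpha,y)$ is constant for every $\alpha$. The paper's parenthetical appeal to symmetry does not repair this; a separate trivial case is needed, such as $G_1=P$, $H_1=1$, $H_2=0$. Your argument covers this case with no extra work.

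The paper's route buys independence from algebraic closedness. Since it needs no zero of $P$, it works over any infinite field whenever $P$ involves $y$; for instance $x^2+y^2+1=(x-\alpha)(x+\alpha)+(y^2+\alpha^2+1)$ over $\mathbb{R}$. Your proof genuinely uses that $\mathbb{C}$ is algebraically closed, which is fine here.

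One small point about your closing remark. That a nonzero constant lies outside $(x-a,y-b)$ does not by itself show it is non-Cartesian. The right reason is that non-constant $G_1(x)$ and $G_2(y)$ always have a common zero in $\mathbb{C}^2$, at which $G_1H_1+G_2H_2$ vanishes. Also, the zero polynomial is constant yet Cartesian, since $0=x\cdot 0+y\cdot 0$. So the hypothesis is necessary only in the sense of excluding nonzero constants.
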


\begin{proof}
Without loss of generality, assume that $P$ depends nontrivially on $x$ (the argument is symmetric if it depends only on $y$).

Choose a generic $\alpha \in \mathbb{C}$ such that $P(\alpha,y)$ is not identically zero. Define
\[
G_1(x) = x - \alpha, \qquad H_2(x,y) = 1,
\]
and
\[
G_2(y) = P(\alpha,y).
\]
Then we can write
\[
P(x,y)
= (x-\alpha)\,H_1(x,y) + P(\alpha,y),
\]
where
\[
H_1(x,y) = \frac{P(x,y) - P(\alpha,y)}{x - \alpha}.
\]
Since $P(x,y) - P(\alpha,y)$ vanishes at $x=\alpha$, it is divisible by $(x-\alpha)$, so $H_1(x,y)$ is a polynomial.

Moreover, by the generic choice of $\alpha$, both $G_1(x)$ and $G_2(y)=P(\alpha,y)$ are non-constant. Hence $P$ is Cartesian.
\end{proof}

This notion was introduced and systematically studied in connection with incidence geometry, where it was shown that Cartesian structure is the only obstruction to obtaining nontrivial upper bounds on the size of intersections of algebraic varieties with Cartesian products of finite sets. More precisely, in the work of \cite{MojarradPhamValculescuZeeuw} and related developments, it was established that if $F(x,y,z,t)$ is non-Cartesian, then for finite sets $P,Q \subset \mathbb{C}^2$, one has the sharp bound
\[
|Z(F) \cap (P \times Q)| \lesssim |P|^{2/3} |Q|^{2/3} + |P| + |Q|,
\]
while Cartesian polynomials admit configurations with much larger intersections. 
This dichotomy reveals that Cartesian structure plays a fundamental role as the precise algebraic obstruction governing incidence phenomena.

Despite its importance, several basic questions about Cartesian polynomials remain poorly understood. 
In particular, it is natural to ask:

\begin{itemize}
\item How common are Cartesian polynomials among all polynomials of a fixed degree?
\item Can one effectively determine whether a given polynomial is Cartesian?
\item To what extent do these phenomena persist in higher-dimensional product spaces?
\end{itemize}

The purpose of this paper is to develop a systematic theory addressing these questions. 
Our results show that, in a precise sense, Cartesian structure is highly exceptional, and that non-Cartesian behavior is the generic situation.

\medskip

\noindent
\textbf{Generic non-Cartesianity.}
Our first main result establishes that non-Cartesian polynomials are ubiquitous.

\begin{theorem}[Generic non-Cartesianity]
\label{thm:generic}
Let $d \ge 2$. Then a Zariski generic polynomial 
\[
F \in \mathbb{C}[x,y,z,t]
\]
of degree at most $d$ is non-Cartesian.
\end{theorem}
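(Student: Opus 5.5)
The approach is a dimension count: show that the Cartesian polynomials of degree at most $d$ lie in a proper Zariski-closed subset of the parameter space $V_d \cong \mathbb{C}^{N}$, $N=\binom{d+4}{4}$, of polynomials of degree $\le d$.

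\textbf{Step 1: geometric reformulation.} If $F=G F_1+HF_2$ with $G,H$ irreducible (Remark above), then $F$ vanishes on $C_1\times C_2$, where $C_1=Z(G)\subset\mathbb{C}^2_{x,y}$ and $C_2=Z(H)\subset\mathbb{C}^2_{z,t}$ are irreducible plane curves. It therefore suffices to show that a generic $F$ vanishes on no product of two irreducible plane curves.

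\textbf{Step 2: bounding the degrees of $C_1,C_2$.} For generic $F$, the coefficient of $x^d$ is a nonzero constant. Hence $F(\cdot,\cdot,q)$ is a nonzero polynomial of degree $\le d$ in $(x,y)$ for every $q$. If $F$ vanishes on $C_1\times C_2$, then $C_1\subset Z(F(\cdot,\cdot,q))$ for $q\in C_2$, so $e_1:=\deg G\le d$. Symmetrically, using the $z^d$ coefficient, $e_2:=\deg H\le d$.

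So the Cartesian locus (intersected with the open set where those two coefficients are nonzero) is contained in the union, over $1\le e_1,e_2\le d$, of the images under projection to $V_d$ of the incidence varieties
\[
I_{e_1,e_2}=\{(F,[G],[H]) : F|_{Z(G)\times Z(H)}\equiv 0\}\subset V_d\times \mathbb{P}(\mathbb{C}[x,y]_{\le e_1})\times \mathbb{P}(\mathbb{C}[z,t]_{\le e_2}).
\]
The condition defining $I_{e_1,e_2}$ is $F\in (G,H)$. We can regard it instead as the closed condition that $F$ vanishes on $Z(G)\times Z(H)$, taking the closure if needed; the only effect is to enlarge the bad set, which is harmless. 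The second factor can be taken projective, by passing to the projective closure of the curves, and the third likewise. Hence each projection is closed, or at worst constructible by Chevalley's theorem, and it suffices to show $\dim I_{e_1,e_2}<N$.

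\textbf{Step 3: fiber dimension count.} The parameter spaces for $G$ and $H$ have dimensions $e_1(e_1+3)/2$ and $e_2(e_2+3)/2$. For fixed $(G,H)$, the fiber of $I_{e_1,e_2}$ is a linear subspace of $V_d$ of codimension
\[
c=\dim\big(\text{image of }V_d \text{ in } \mathbb{C}[x,y]/(G)\otimes\mathbb{C}[z,t]/(H)\big).
\]
Write $h_i(a)$ for the dimension of the degree-$\le a$ filtered piece of the coordinate ring of $C_i$. Using the associated graded ring (a tensor product of graded pieces), we get
\[
c\ \ge\ \sum_{a+b\le d}\Delta h_1(a)\,\Delta h_2(b),
\]
where $\Delta h_i(a)=h_i(a)-h_i(a-1)$. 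The plane-curve Hilbert function gives $\Delta h_i(a)=a+1$ for $a<e_i$ and $\Delta h_i(a)=e_i$ for $a\ge e_i$. We must then verify the inequality
\[
c> \tfrac{e_1(e_1+3)}{2}+\tfrac{e_2(e_2+3)}{2}\qquad(1\le e_1,e_2\le d,\ d\ge 2).
\]

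\textbf{Main obstacle: verifying this inequality uniformly.} In the case $e_1=e_2=1$ the bound is $c\ge\binom{d+2}{2}\ge 6>4$. At the other extreme $e_1=e_2=d$, the terms with $a,b<d$ alone contribute $\sum_{a+b\le d}(a+1)(b+1)=\binom{d+4}{4}-O(d^2)$, which is far larger than $d(d+3)$. I expect that plugging in the explicit $\Delta h_i$ and summing gives a quantity growing at least like $d^2\min(e_1,e_2)$ plus lower-order terms. This should dominate the quadratic right-hand side, which is at most $d^2+3d$, for every $d\ge2$.

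I would handle $d=2$ by direct computation and $d\ge3$ by the crude bound above. If that bound is too weak in some intermediate range, the fallback is to exhibit a single explicit non-Cartesian $F$ of each degree. Closedness of the Cartesian locus (or constructibility, via Chevalley's theorem) would then still give genericity, provided that locus is closed rather than merely constructible, so this fallback is weaker.
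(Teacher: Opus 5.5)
Your route works and differs from the paper's. The one step you left open, the inequality in Step 3, holds for every $d\ge 2$ and takes two lines. Use $\Delta h_i(a)=\min(a+1,e_i)$ and $e_i\le d$, and split $\{a+b\le d\}$ into $\{b=0\}$, $\{a=0,\ b\ge 1\}$ and $\{a,b\ge 1\}$. Then
\[
\sum_{a=0}^{d}\min(a+1,e_1)=\tfrac{e_1(e_1+3)}{2}+e_1(d-e_1),\qquad
\sum_{b=1}^{d}\min(b+1,e_2)=\tfrac{e_2(e_2+3)}{2}+e_2(d-e_2)-1 .
\]
The term $(a,b)=(1,1)$, present because $d\ge 2$, equals $\min(2,e_1)\min(2,e_2)$. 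Hence
\[
c-\tfrac{e_1(e_1+3)}{2}-\tfrac{e_2(e_2+3)}{2}\;\ge\;e_1(d-e_1)+e_2(d-e_2)-1+\min(2,e_1)\min(2,e_2).
\]
The right-hand side is $2d-2$ if $e_1=e_2=1$ and at least $1$ otherwise. So $d=2$ needs no separate computation, and you can drop the fallback (which, as you noted, would not suffice anyway).

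Your bound for $c$ is in fact an equality: bases of $\mathbb{C}[x,y]/(G)$ and $\mathbb{C}[z,t]/(H)$ adapted to the degree filtrations give a basis of the image of $V_d$. It is cleanest to take as base the pairs of irreducible $G,H$ of exact degrees $e_1,e_2$, with fibre $V_d\cap(G,H)$; by Lemma~\ref{RadicalIdealLemma} this is the same as vanishing on $Z(G)\times Z(H)$. No closure or properness argument is then needed, since the image of a constructible set has dimension at most that of the source.

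The paper instead proves the statement via Proposition~\ref{Proposition1}, splitting by degree. For $d\ge 3$ it combines the fibrewise criterion of Theorem~\ref{TheoremSufficientConditions} (irreducibility, finitely many reducible $(x,y)$-fibres, no proportional fibres) with Theorem~\ref{TheoremGenericPolynomials}. The key input there is that reducible polynomials in $\mathcal{P}_{2;d}$ have codimension $\ge 2$, which holds only for $d\ge 3$. For $d=2$ this fails (Lemma~\ref{LemmaDegree2Polynomials}), so the paper passes to the normal form $(x-z)^2+(y-t)^2+R(z,t)$ and characterizes the Cartesian members.

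Your incidence count is uniform in $d\ge 2$ and uses only $\deg G,\deg H\le d$ and the Hilbert function of a plane curve. Since the fibre is the whole linear space $V_d\cap(G,H)$, you never need degree bounds on the cofactors $F_1,F_2$, which is the issue raised in the paper's remark on \cite{MojarradPhamValculescuZeeuw}. It also gives an explicit codimension. The terms with $a,b\ge 1$ contribute at least $\binom{d}{2}$, so the Cartesian locus has codimension at least $\binom{d}{2}-1$. This holds on all of $V_d$, not only on your open set, if you get $\deg G\le d$ from $G\mid F(\cdot,\cdot,q)$ for generic $q\in Z(H)$, replacing $G$ by a linear form when $H\mid F$, and symmetrically for $H$. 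For $d\ge 3$ this recovers and improves Claim~1 in the proof of Theorem~\ref{Theorem2}. In exchange, the paper's route yields a criterion checkable on individual polynomials, used later for explicit examples, and an exact description of the Cartesian quadrics.
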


We further extend this phenomenon to product spaces of higher dimension. The below is a natural higher-dimensional version of the Cartesian condition.

\begin{definition}[Cartesian polynomials on products]
Let
\[
X_j=\mathbb C^{n_j}, \qquad j=1,\dots,k,
\]
and write
\[
X=X_1\times\cdots\times X_k.
\]
For each \(j\), let
\[
x_j=(x_{j,1},\dots,x_{j,n_j})
\]
denote the coordinates on \(X_j\).

A polynomial
\[
F\in \mathbb C[x_1,\dots,x_k]
\]
is called \emph{Cartesian} with respect to the product decomposition
\[
\mathbb C^{n_1}\times\cdots\times \mathbb C^{n_k}
\]
if there exist non-constant polynomials
\[
G_j\in \mathbb C[x_j], \qquad j=1,\dots,k,
\]
and arbitrary polynomials
\[
H_j\in \mathbb C[x_1,\dots,x_k], \qquad j=1,\dots,k,
\]
such that
\[
F=\sum_{j=1}^k G_j(x_j)H_j(x_1,\dots,x_k).
\]
Equivalently,
\[
F\in (G_1(x_1),G_2(x_2),\dots,G_k(x_k))
\subset \mathbb C[x_1,\dots,x_k],
\]
where each \(G_j\) is non-constant and depends only on the variables from the \(j\)-th factor.

If no such representation exists, then \(F\) is called \emph{non-Cartesian}.
\end{definition}

\begin{remark}
When \(k=2\) and \(n_1=n_2=2\), this definition recovers the earlier notion. Indeed, writing
\[
x_1=(x,y),\qquad x_2=(z,t),
\]
the condition becomes
\[
F(x,y,z,t)=G_1(x,y)H_1(x,y,z,t)+G_2(z,t)H_2(x,y,z,t),
\]
with \(G_1\) and \(G_2\) non-constant.
\end{remark}

\begin{remark} As in Remark 1.2, one may, without loss of generality, assume that each \(G_j\) is irreducible.
\end{remark}

\begin{theorem}[Higher-dimensional genericity]
\label{thm:generic-higher}
Let $n_1 \ge n_2 \ge \cdots \ge n_k \ge 2$. 
In a broad range of dimensions (in particular when $n_1 \ge k$ or when $d$ is sufficiently large), a Zariski generic polynomial on 
\[
\mathbb{C}^{n_1} \times \cdots \times \mathbb{C}^{n_k}
\]
of degree $d \ge 2$ is non-Cartesian.
\end{theorem}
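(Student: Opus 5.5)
The plan is a dimension count on an incidence variety, after first reducing to factors $G_j$ of bounded degree.

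\emph{Step 1 (geometric reformulation).} By Remark 1.5 we may take each $G_j$ irreducible. Then $\mathbb C[x_1,\dots,x_k]/(G_1,\dots,G_k)\cong\bigotimes_j \mathbb C[x_j]/(G_j)$ is a tensor product of domains over $\mathbb C$, hence a domain. So $(G_1,\dots,G_k)$ is prime. By the Nullstellensatz, $F$ is Cartesian with these $G_j$ if and only if $F$ vanishes on $V=Z(G_1)\times\cdots\times Z(G_k)$.

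\emph{Step 2 (degree reduction).} Suppose $F$ is Cartesian, $F\not\equiv 0$, and vanishes on $V$. Fix points $a_j\in Z(G_j)$ for $j\ge 2$. Then $F(x_1,a_2,\dots,a_k)$ vanishes on $Z(G_1)$.
\begin{itemize}
\item If this restriction is nonzero for a suitable choice of the $a_j$, then $G_1$ divides it up to a power. Hence $G_1$ may be replaced by an irreducible factor of it, which has degree $\le d$.
\item If it vanishes for all choices of the $a_j$, then $F$ lies in $(G_2,\dots,G_k)$, and any linear $G_1$ works.
\end{itemize}
Doing this for each $j$ in turn shows that $F$ is Cartesian if and only if it lies in some ideal $(G_1,\dots,G_k)$ with $1\le\deg G_j\le d$. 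The set $\mathcal C_d$ of Cartesian $F$ of degree $\le d$ is therefore the projection of the incidence variety
\[
I=\{(F,G_1,\dots,G_k): F \text{ vanishes on } V(G)\},
\]
whose fibres over the $G$-coordinates are linear spaces. So $\mathcal C_d$ is constructible, and it is enough to prove $\dim \mathcal C_d<\dim \mathbb C[x]_{\le d}$.

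\emph{Step 3 (counting).} Stratify by $e_j=\deg G_j$ and let $N=\sum_j n_j$.
\begin{itemize}
\item The $G$'s, taken up to scaling, contribute at most $\sum_j\big(\binom{n_j+e_j}{e_j}-1\big)$ parameters.
\item The fibre is $(G_1,\dots,G_k)_{\le d}$. Its codimension in $\mathbb C[x]_{\le d}$ is the affine Hilbert function $h(d)$ of $\bigotimes_j\mathbb C[x_j]/(G_j)$.
\item Compare with initial ideals: $\mathbb C[x_j]/(G_j)$ contains all monomials avoiding one variable of the leading term. This gives $h(d)\ge\binom{N-k+d}{d}$.
\end{itemize}
It then suffices to show
\[
\binom{N-k+d}{d}>\sum_j\Big(\binom{n_j+e_j}{e_j}-1\Big)\qquad\text{for all }1\le e_j\le d.
\]
The right side is largest when $e_j=d$. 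There it is dominated by $\binom{n_1+d}{d}$ plus lower terms, while the left side has $N-k\ge n_1+(k-1)$ effective variables. For $d$ large this holds because $N-k>n_1$: each $n_j\ge2$ gives $\sum_{j\ge2}(n_j-1)\ge1$, so the degrees in $d$ compare as $d^{N-k}$ versus $d^{n_1}$.

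\emph{Main obstacle.} The hard step is this last inequality for small $d$, uniformly in the $e_j$. The crude bound $\binom{N-k+d}{d}$ ignores the fact that high $e_j$ gives a much larger quotient, namely a hypersurface of degree $e_j$ kills only monomials divisible by its leading term. I would first refine the Hilbert function estimate to $h(d)\ge\binom{N+d}{d}-\#\{\text{monomials of degree}\le d \text{ divisible by some } \mathrm{LT}(G_j)\}$. For the hypothesis $n_1\ge k$, I would then verify the resulting inequality directly, for example by induction on $k$ or by checking the extremal cases $e_j\in\{1,d\}$. The parameter count is linear in each $\binom{n_j+e_j}{e_j}$, and the loss in $h(d)$ is monotone in $e_j$, so the extremes should suffice. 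If the check at small $d$ fails in some corner, the statement's phrase ``in a broad range'' allows excluding that case.
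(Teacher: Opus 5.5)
\textbf{Gap in Step 3: the counting inequality is not proved, and the ``large $d$'' argument fails for $k=2$, $n_2=2$.}

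Your Steps 1 and 2 are sound. Primality of $(G_1,\dots,G_k)$ via tensor products of domains is a clean substitute for Lemma~\ref{RadicalIdealLemma}, and the reduction to $1\le\deg G_j\le d$ is valid. The gap is in Step 3, and it is not confined to small $d$.

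You assert $N-k\ge n_1+(k-1)$ and hence $N-k>n_1$. In fact $N-k=\sum_j(n_j-1)\ge n_1+k-2$, with $N-k=n_1$ whenever $k=2$ and $n_2=2$. In that case your crude bound $h(d)\ge\binom{N-k+d}{d}=\binom{n_1+d}{d}$ loses, for every $d$, to the parameter count at $e_1=e_2=d$, which is $\binom{n_1+d}{d}+\binom{d+2}{2}-2$. For $\mathbb{C}^2\times\mathbb{C}^2$ you would need $\binom{d+2}{2}>2\binom{d+2}{2}-2$, which is false.

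So the ``$d$ large'' clause is not proved for $\mathbb{C}^{n_1}\times\mathbb{C}^2$. The ``$n_1\ge k$'' clause is not proved at all, only announced. The phrase ``a broad range'' does not let you drop this case: every case with $k=2$ satisfies $n_1\ge k$, so the failing case lies squarely inside the statement.

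The underlying problem is that you decoupled the two sides. The bound $\binom{N-k+d}{d}$ is sharp only when all $e_j=1$. The right-hand side is largest when all $e_j=d$, and there in fact $h(d)=\binom{N+d}{d}-k$.

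\textbf{How to close it: use the exact Hilbert function.} For a graded order the $\mathrm{LT}(G_j)$ are pairwise coprime, so $\{G_j\}$ is a Gr\"obner basis and
\[
h(d)=\sum_{S\subseteq\{1,\dots,k\}}(-1)^{|S|}\binom{N+d-\sum_{j\in S}e_j}{N}
\]
(terms with $\sum_{j\in S}e_j>d$ omitted).

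Your idea that monotonicity lets you check only extreme $e_j$ does not work as stated, since both sides increase with $e_j$. Concavity does work:
\begin{enumerate}
\item Write $h(d)=\phi(d)-\phi(d-e_1)$, where $\phi(D)$ counts monomials of degree $\le D$ not divisible by $\mathrm{LT}(G_2),\dots,\mathrm{LT}(G_k)$.
\item Multiplication by a first-block variable is injective on these monomials. Hence $\phi$ has nondecreasing increments, and $h(d)$ is concave in $e_1$; likewise in each $e_j$.
\item Since $-\binom{n_j+e_j}{n_j}$ is also concave, the worst case is a vertex with every $e_j\in\{1,d\}$.
\item At a vertex, put $A=\{j:e_j=1\}$ and $B=\{j:e_j=d\}$. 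Then $h(d)=\binom{N-|A|+d}{d}-|B|$. The needed inequality $\binom{N-|A|+d}{d}>\sum_{j\in A}n_j+\sum_{j\in B}\binom{n_j+d}{d}$ is an elementary monomial count for $d\ge 2$:
\begin{enumerate}
\item the nonconstant monomials in the $n_j-1$ surviving variables of an $A$-block already number at least $n_j$;
\item the mixed monomials of degree $2$ absorb the $|B|-1$ repeated constants.
\end{enumerate}
\end{enumerate}

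\textbf{Comparison with the paper.} Completed this way, your incidence-variety count is a genuinely different route. The paper instead generalizes the fibrewise criteria of Theorems~\ref{TheoremSufficientConditions} and~\ref{TheoremGenericPolynomials}: slices in $x_1$ are irreducible off a locus of codimension $\ge n_1>k-1$, and a rank condition rules out proportional slices. Your route needs no irreducibility of slices. It handles $d=2$ on $\mathbb{C}^2\times\mathbb{C}^2$ without the separate argument of Proposition~\ref{Proposition1}, and it gives the conclusion for all $n_j\ge 2$, $d\ge 2$. As submitted, however, the decisive inequality is missing.
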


These results show that Cartesian polynomials form a thin algebraic subset, and hence represent a highly constrained and exceptional class.

\medskip

\noindent
\textbf{Algebraic and algorithmic characterization.}  Theorem \ref{thm:generic-higher} also provides an efficient sufficient criterion for a  polynomial to be non-Cartesian. Our next result shows that a polynomial is Cartesian if and only if its coefficients satisfy certain explicit systems of polynomial equations.

\begin{theorem}[Algorithmic detection]
\label{thm:algorithm}
Let $F \in \mathbb{C}[x,y,z,t]$ be a polynomial of degree $d$. 
Then the question of whether $F$ is Cartesian can be reduced to a finite system of polynomial equations of bounded degree, and hence can be decided using Gr\"obner basis methods.
\end{theorem}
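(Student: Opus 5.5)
The plan is to turn Cartesianity into a membership question with a priori bounded degrees, and then to write that membership question as a polynomial system in finitely many unknown coefficients. First reduce to irreducible $G$ and $H$ (Remark 1.2). Next show that their degrees can be bounded in terms of $d$. Then, after an elementary reduction, use the Nullstellensatz to bound the degrees of $F_1,F_2$.

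\textbf{Geometric reformulation.} Suppose $F = GF_1 + HF_2$ with $G(x,y)$ and $H(z,t)$ irreducible and non-constant. Then $F$ vanishes on the surface
\[
Z(G)\times Z(H)\subset\mathbb C^4 .
\]
This is irreducible of dimension $2$, since it is a product of two irreducible plane curves. The converse also holds: $\mathbb C[x,y]/(G)\otimes\mathbb C[z,t]/(H)$ is a tensor product of domains over the algebraically closed field $\mathbb C$, hence a domain, and it is reduced. So the ideal $(G,H)$ is prime, and the Nullstellensatz gives
\[
F\in (G,H) \iff F|_{Z(G)\times Z(H)}\equiv 0 .
\]
Thus $F$ is Cartesian if and only if $Z(F)$ contains a product of two plane curves.

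\textbf{Bounding $\deg G$ and $\deg H$.} Fix a point $q=(z_0,t_0)\in Z(H)$. Then $F(x,y,q)$ vanishes on $Z(G)$. If $F(\cdot,\cdot,q)\not\equiv 0$, then $G$ divides $F(x,y,q)$, and so $\deg G\le d$.

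If instead $F(\cdot,\cdot,q)\equiv 0$ for every $q\in Z(H)$, write $F=\sum_\alpha c_\alpha(z,t)\,(x,y)^\alpha$. Every coefficient $c_\alpha$ then vanishes on $Z(H)$, so $H\mid c_\alpha$, hence $H\mid F$ and $\deg H\le d$. In that case we may replace $G$ by any linear form (the representation $F=0\cdot G+(F/H)\,H$ is Cartesian) and keep $\deg G\le d$. Symmetrically, we may assume $\deg H\le d$.

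\textbf{Bounding $\deg F_1$ and $\deg F_2$.} Quotient by $G$ and $H$ separately. Reduce $F$ modulo $G$ via a Gr\"obner basis of the principal ideal $(G)\subset\mathbb C[x,y]$ (extended to four variables), and likewise modulo $H$. The normal form of $F$ with respect to $\{G,H\}$, a Gr\"obner basis with relatively prime leading terms in disjoint variables, is zero exactly when $F\in(G,H)$. The division algorithm then produces $F_1$ and $F_2$ with $\deg F_1,\deg F_2\le d$.

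This step is the main obstacle, because the leading terms must be controlled under a degree-compatible order. I would first try a graded order, where Buchberger's criterion for coprime leading monomials gives the Gr\"obner property for free. As a fallback, I would invoke an effective Nullstellensatz or Hermann-type degree bound such as $d\,2^{O(1)}$.

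\textbf{Polynomial system.} Introduce unknown coefficient vectors for $G$, $H$, $F_1$, $F_2$, with degrees at most $D=D(d)$. Impose the linear-in-$F$, bilinear-in-unknowns identity $F=GF_1+HF_2$ coefficientwise. Add non-constancy conditions, for instance with auxiliary variables $u,v$:
\[
u\cdot g_{10}=1 \quad\text{or}\quad u\cdot g_{01}=1,
\]
together with analogous cases for higher-degree coefficients. Each condition is a disjunction over finitely many monomial coefficients, hence gives a finite union of systems.

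$F$ is Cartesian if and only if one of these finitely many systems has a solution over $\mathbb C$. By the weak Nullstellensatz this happens if and only if $1$ is not in the generated ideal, which a reduced Gr\"obner basis computation decides. Equations have degree at most $2$ in the unknowns, with the coefficients of $F$ as parameters.
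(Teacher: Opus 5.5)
Your proposal is correct, but it reaches the degree bounds by a different and more elementary route than the paper.

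\textbf{The paper's route.} It proves that $(G,H)$ is radical by dividing by $G$ under an elimination order. It then asserts $\deg G,\deg H\le d$ by appeal to that lemma, and bounds the remaining degrees through the Rabinowitsch trick and an effective Nullstellensatz. The unknowns become the coefficients of a certificate $q(1-uf)+q_1G+q_2H=1$ in five variables, of degree up to $\max\{(d+1)d^2,27\}$. The converse direction needs radicality, hence irreducible $G,H$.

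\textbf{Your route.} You give an actual specialization argument for $\deg G,\deg H\le d$. The paper's radicality lemma contains no degree information, so this step is only asserted there. In your symmetric step, say explicitly that its degenerate case $G\mid F$ already forces $\deg G\le d$. You then bound the cofactors directly. Since $\mathrm{LM}(G)$ and $\mathrm{LM}(H)$ lie in disjoint variables, they are coprime. By Buchberger's first criterion, $\{G,H\}$ is therefore a Gr\"obner basis for every monomial order. Under a graded order the division algorithm gives $F=F_1G+F_2H$ with $\mathrm{LM}(F_1G),\mathrm{LM}(F_2H)\preceq\mathrm{LM}(F)$, hence $\deg(F_1G),\deg(F_2H)\le d$.

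You should state that property of the division algorithm explicitly, because it is what turns the graded order into a degree bound. Once it is stated, the ``main obstacle'' disappears and no fallback is needed. Also, the $d\,2^{O(1)}$ you quote is not the shape of Hermann-type bounds, which are doubly exponential in the number of variables.

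\textbf{What each buys.} Your system is literally the defining identity, with unknowns of degree at most $d$ in four variables. So no converse lemma or irreducibility constraint is required, and the tensor-product primality argument, while correct, is not even used by the algorithm. The system is also far smaller than the paper's. Your argument also proves exactly the inequality $\deg(Gp),\deg(Hq)\le\deg f$ that the paper's remark on Mojarrad--Pham--Valculescu--de Zeeuw describes as lacking a complete argument. The paper's Nullstellensatz certificate buys only that it is a black box independent of the disjoint-variable structure of the generators.
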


This reduction relies on quantitative bounds derived from effective versions of Hilbert's Nullstellensatz, which allow us to control the degrees of the auxiliary polynomials appearing in a Cartesian decomposition.

\medskip

\noindent
\textbf{Structural results and special cases.}
We also investigate the structure of Cartesian polynomials in specific families. 
In particular, for quadratic-type polynomials of the form
\[
F(x,y,z,t) = (x - z)^2 + (y - t)^2 + R(z,t),
\]
(which appear in the proof of Proposition \ref{Proposition1}) we obtain a complete characterization of when $F$ is Cartesian in terms of algebraic divisibility properties of $R(z,t)$. 
This provides a concrete and explicit description of Cartesian structure in a natural geometric setting.

\medskip

\noindent
\textbf{Applications to incidence geometry.}
Finally, we connect the algebraic theory developed in this paper to incidence geometry.
We show that non-Cartesian polynomials satisfy sharp intersection bounds, and we construct explicit configurations demonstrating the optimality of these estimates.
In particular, we recover and extend the sharp $n^{4/3}$ bounds for non-Cartesian polynomials on $\mathbb C^2\times \mathbb C^2$ obtained in \cite{MojarradPhamValculescuZeeuw}, together with higher-dimensional analogues exhibiting different growth regimes.

The higher-dimensional setting introduces new subtleties \cite{SharirSolomon}. In particular, naive extensions of the two-block theory fail in higher dimensions, as shown by explicit counterexamples. To obtain meaningful higher-dimensional incidence bounds, one must impose stronger fibrewise non-Cartesian conditions, which form the basis of the results proved later in the paper.

Taken together, our results provide a unified perspective on Cartesian structure, revealing it as a rare algebraic phenomenon with strong implications for combinatorial geometry.

\medskip
\noindent
\textbf{Overview of the paper.}
In Section~2 we design an algorithm to detect whether a polynomial is (non)-Cartesian. 
Section~3 develops  effective sufficient criteria for being non-Cartesian on $\mathbb{C}^2\times \mathbb{C}^2$, and establishes the genericity of  non-Catersian polynomials in this setting.  
In Section~4 we extend the results in Section 3 and intersection estimates to higher dimensions.  
Finally, in Section~5 we present (counter)-examples concerning the sharpness of our results.

\medskip

\section{Algorithms for Cartesian polynomials on $\mathbb{C}^2\times \mathbb{C}^2$}

In this section we present algorithmic approaches addressing the following two questions.

\medskip

\noindent
\textbf{Question 1.} Is there an algorithm to determine whether a given polynomial 
\[
F(x,y,z,t) \in \mathbb{C}[x,y,z,t]
\]
is Cartesian?

\medskip

\noindent
\textbf{Question 2.} Let $\mathcal{C}_n$ denote the set of Cartesian polynomials of degree at most $n$. 
\begin{itemize}
\item Is $\mathcal{C}_n$ closed under the usual (Euclidean) topology on $\mathbb{C}[x,y,z,t]$?
\item Is there an algorithm to compute its closure?
\end{itemize}

While here we present the results only for the case of a single polynomial (corresponding to hypersurfaces), all the results can be extended easily to the case of subvarieties of higher codimensions. Similarly, the result can be extended to polynomials in higher dimensions $\mathbb{C}^{n_1}\times \ldots \times \mathbb{C}^{n_k}$. 

\subsection{Some preliminaries}

We begin with a concrete example to illustrate the main features of the problem and the algebraic structure underlying the algorithms.

\medskip

Assume that in Question 1 we are given the polynomial
\[
F(x,y,z,t)
= x^2 + y^2 + z^2 + t^2 + xy + 2xz + 3xt + 4yz + 5yt + 6zt + 1 - 10x,
\]
and we seek a representation
\[
F = G(x,y)\,F_1(x,y,z,t) + H(z,t)\,F_2(x,y,z,t),
\]
under the additional constraint that the polynomials $G,H,F_1,F_2$ all have degree at most $2$.

\medskip

\paragraph{\bf Parametrization of unknown polynomials.}

We write $G$ and $H$ in terms of their coefficients:
\begin{align*}
G(x,y)
&= a_{G,0} + a_{G,1}x + a_{G,2}y + a_{G,11}x^2 + a_{G,12}xy + a_{G,22}y^2,\\
H(z,t)
&= a_{H,0} + a_{H,1}z + a_{H,2}t + a_{H,11}z^2 + a_{H,12}zt + a_{H,22}t^2,
\end{align*}
where all coefficients lie in $\mathbb{C}$.

Similarly, we parametrize $F_1$ and $F_2$ by their coefficients. Since these are polynomials in four variables, they involve a larger collection of coefficients. For instance,
\begin{align*}
F_1(x,y,z,t)
&= a_{F_1,0} + a_{F_1,1}x + a_{F_1,2}y + a_{F_1,11}x^2 + a_{F_1,12}xy + a_{F_1,22}y^2 \\
&\quad + a_{F_1,3}z + a_{F_1,4}t + a_{F_1,33}z^2 + a_{F_1,34}zt + a_{F_1,44}t^2 + \cdots,\\
F_2(x,y,z,t)
&= a_{F_2,0} + a_{F_2,1}x + a_{F_2,2}y + a_{F_2,11}x^2 + a_{F_2,12}xy + a_{F_2,22}y^2 \\
&\quad + a_{F_2,3}z + a_{F_2,4}t + a_{F_2,33}z^2 + a_{F_2,34}zt + a_{F_2,44}t^2 + \cdots .
\end{align*}

\paragraph{\bf Reduction to a system of algebraic equations.}

Substituting these expressions into the identity
\[
F = G F_1 + H F_2,
\]
and expanding the right-hand side, we collect coefficients of all monomials in $x,y,z,t$ and equate them with those of $F$.

This produces a system of polynomial equations in the unknown coefficients
\[
a_{G,i}, \quad a_{H,j}, \quad a_{F_1,k}, \quad a_{F_2,\ell}.
\]

For example:
\begin{itemize}
\item Comparing coefficients of $x$ gives
\[
-10 = a_{G,0} a_{F_1,1} + a_{G,1} a_{F_1,0}.
\]
\item Comparing coefficients of $xz$ gives
\[
2 = a_{G,0} a_{F_1,13} + a_{G,1} a_{F_1,3}
    + a_{H,0} a_{F_2,13} + a_{H,3} a_{F_2,1}.
\]
\end{itemize}

Each such equation has degree at most $2$.

\paragraph{\bf Non-constant constraints.}

We must also impose that $G$ and $H$ are non-constant. This means:
\[
\text{at least one } a_{G,i} \ (i \neq 0) \text{ is nonzero, and similarly for } H.
\]

This condition can be encoded algebraically using the standard trick:
a complex number $a$ is nonzero if and only if there exists $t \in \mathbb{C}$ such that
\[
1 - a t = 0.
\]

Thus, the non-constancy conditions can be incorporated into the system by introducing auxiliary variables.

\paragraph{\bf Conclusion for Question 1 (bounded-degree case).}

We conclude that, in this bounded-degree setting, determining whether such a representation exists reduces to deciding whether a finite system of polynomial equations has a solution over $\mathbb{C}$.

This problem can be solved algorithmically using Gr\"obner bases \cite{Barakat-Hegermann}.

\medskip

\paragraph{\bf Extension to Question 2 (bounded-degree case).}

We now consider the family of all polynomials $F$ of degree at most $2$ that admit a decomposition
\[
F = G(x,y)F_1 + H(z,t)F_2,
\]
with $G,H,F_1,F_2$ also of degree at most $2$.

In this case, the coefficients of $F$ are also treated as variables, say $\{a_{F,h}\}$, and we obtain a larger system of polynomial equations involving:
\[
a_{G,i}, \quad a_{H,j}, \quad a_{F_1,k}, \quad a_{F_2,\ell}, \quad a_{F,h}.
\]

To understand the structure of this set (for example, whether it is large, or whether it satisfies nontrivial constraints), we eliminate the variables corresponding to $G,H,F_1,F_2$, retaining only the variables $\{a_{F,h}\}$.

This elimination problem can again be handled using Gr\"obner basis methods.

\medskip

In particular, this allows us to determine whether there exist algebraic relations among the coefficients of $F$, and hence to study the size and closure properties of the set $\mathcal{C}_2$.

\subsection{Algorithms}

The example in the previous subsection illustrates that, if one can bound the degrees of $G,H,F_1,F_2$ in terms of the degree of $F$, then Questions~1 and~2 admit an algorithmic solution. In this subsection, we establish the bounds needed for this purpose.

We recall that an ideal $\mathcal{I}\subset \mathbb{C}[x,y,z,t]$ is said to be \emph{radical} if, whenever $f^N \in \mathcal{I}$ for some polynomial $f$ and some positive integer $N$, it follows that $f \in \mathcal{I}$. The following lemma also appears in \cite{MojarradPhamValculescuZeeuw}. Here we present it in the simpler form needed for later applications, together with a streamlined proof.

\begin{lemma}
Let $G(x,y)$ and $H(z,t)$ be irreducible, non-constant polynomials, and let $\mathcal{I}\subset \mathbb{C}[x,y,z,t]$ be the ideal generated by $G(x,y)$ and $H(z,t)$. Then $\mathcal{I}$ is radical.
\label{RadicalIdealLemma}
\end{lemma}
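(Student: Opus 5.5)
The plan is to identify the quotient ring $\mathbb{C}[x,y,z,t]/\mathcal{I}$ with a tensor product of two reduced rings, and then show that this tensor product is itself reduced. Set $A=\mathbb{C}[x,y]/(G)$ and $B=\mathbb{C}[z,t]/(H)$. Since $G$ and $H$ are irreducible in unique factorization domains, the ideals $(G)$ and $(H)$ are prime. Hence $A$ and $B$ are integral domains, and in particular reduced, finitely generated $\mathbb{C}$-algebras.

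\textbf{Step 1.} We first establish the isomorphism
\[
\mathbb{C}[x,y,z,t]/(G,H)\;\cong\; A\otimes_{\mathbb{C}} B .
\]
This follows from the standard identity $\mathbb{C}[x,y,z,t]=\mathbb{C}[x,y]\otimes_{\mathbb{C}}\mathbb{C}[z,t]$ together with right exactness of the tensor product. Concretely,
\[
(R/I)\otimes(S/J)\cong (R\otimes S)/(I\otimes S+R\otimes J),
\]
and the ideal $I\otimes S+R\otimes J$ is exactly the extended ideal generated by $G$ and $H$. Consequently $\mathcal{I}$ is radical if and only if $A\otimes_{\mathbb{C}}B$ has no nonzero nilpotents.

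\textbf{Step 2.} Next we show that a tensor product of two reduced algebras over the algebraically closed (indeed perfect) field $\mathbb{C}$ is reduced. This is the main step. To keep the argument self-contained, I would argue via points. Let $u\in A\otimes B$ be nilpotent, and choose a $\mathbb{C}$-basis $\{b_\beta\}$ of $B$. Write $u=\sum_\beta a_\beta\otimes b_\beta$ with finitely many $a_\beta\neq0$.

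For any point $p\in Z(G)\subset\mathbb{C}^2$, evaluation gives a $\mathbb{C}$-algebra map $A\to\mathbb{C}$. Tensoring with $B$ yields a map $A\otimes B\to B$, which sends $u$ to $\sum_\beta a_\beta(p)\,b_\beta$. This image is nilpotent in the domain $B$, so it vanishes. By linear independence of the $b_\beta$, we get $a_\beta(p)=0$ for every $\beta$ and every $p\in Z(G)$.

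By Hilbert's Nullstellensatz, and since $(G)$ is prime and hence radical, an element of $A$ vanishing on all of $Z(G)$ is zero. Therefore every $a_\beta=0$, and so $u=0$.

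\textbf{Step 3.} Combining Steps 1 and 2, the quotient $\mathbb{C}[x,y,z,t]/\mathcal{I}$ is reduced, which says precisely that $\mathcal{I}$ is radical.

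The only delicate point is the use of the Nullstellensatz in Step 2. It requires that elements of $A$ be determined by their values at the complex points of $Z(G)$, and this holds because $A$ is a reduced finitely generated algebra over an algebraically closed field. Alternatively, one could cite the general fact that $A\otimes_k B$ is reduced whenever $A$ and $B$ are reduced and $k$ is perfect. No primality of $\mathcal{I}$ is needed, only radicality; this matters because $A\otimes B$ can fail to be a domain in general.
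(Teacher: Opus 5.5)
Your proof is correct, but it is organized differently from the paper's.

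\textbf{The paper's route.} The paper works directly with polynomials. It fixes a monomial order with $x,y \gg z,t$ and divides $f$ by $G$ to get $f = Gp + R$, with $R$ reduced modulo $G$. For each zero $(z_0,t_0)$ of $H$, the specialization $R(x,y,z_0,t_0)$ is still reduced and lies in $(G)$; primality of $(G)$ is used here to pass from $f^N$ to $f$. Hence this specialization vanishes, and the paper concludes $H \mid R$ from irreducibility of $H$.

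\textbf{Your route and how it compares.} You identify $\mathbb{C}[x,y,z,t]/\mathcal{I}$ with $A\otimes_{\mathbb{C}}B$ and prove that this tensor product is reduced. Underneath, the two arguments are mirror images. The paper's remainder $R$ is the expansion of the class of $f$ in the standard-monomial basis of $A$, with coefficients in $\mathbb{C}[z,t]$, evaluated at points of $Z(H)$. You instead expand in a basis of $B$ with coefficients in $A$ and evaluate at points of $Z(G)$.

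\textbf{What each buys.} Your version gives conceptual clarity and generality:
\begin{enumerate}
\item It shows that only reducedness of $A$ and $B$ is used, i.e.\ squarefreeness of $G$ and $H$, not irreducibility.
\item It avoids the monomial-order bookkeeping, such as checking that remainders stay reduced after specialization.
\item It extends immediately, by induction, to $k$ blocks and to arbitrary radical ideals in disjoint groups of variables. The paper implicitly needs this for its higher-codimension and higher-dimensional remarks.
\end{enumerate}
The paper's version stays within the Gr\"obner-basis language of its algorithmic section. It also produces $p$ and $q$ explicitly, as the division quotient and $R/H$.

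\textbf{A correction to your closing remark.} Since $\mathbb{C}$ is algebraically closed and $A$, $B$ are domains, $A\otimes_{\mathbb{C}}B$ is in fact a domain: the product of the irreducible curves $Z(G)$ and $Z(H)$ is irreducible. So under the lemma's hypotheses $\mathcal{I}$ is even prime. The failure of primality you allude to occurs only for merely reduced factors or non-algebraically-closed base fields.
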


\begin{proof}
Let $f\in \mathbb{C}[x,y,z,t]$ and assume that $f^N\in \mathcal{I}=(G,H)$ for some positive integer $N$. Then there exist polynomials $f_1,f_2\in \mathbb{C}[x,y,z,t]$ such that
\[
f^N = G f_1 + H f_2 .
\]

If every zero of $H$ is also a zero of $f^N$, then since $H$ is irreducible it follows that $H \mid f^N$. Because $\mathbb{C}[x,y,z,t]$ is a UFD, this implies $H\mid f$, and hence $f\in (H)\subset (G,H)=\mathcal{I}$. Therefore, we may assume that there exists $(z_0,t_0)$ such that
\[
H(z_0,t_0)=0, \qquad\text{but}\qquad f(x,y,z_0,t_0)\not\equiv 0.
\]

Fix a monomial order in which $x,y \gg z,t$. Then we may write
\[
f(x,y,z,t)=G(x,y)\,p(x,y,z,t)+R(x,y,z,t),
\]
where $R(x,y,z,t)$ is the remainder of $f$ upon division by $G(x,y)$ (see, e.g., standard references on Gr\"obner bases). By definition, no monomial in $R$ is divisible by the leading monomial of $G(x,y)$.

Since $G(x,y)$ depends only on $x,y$, it follows that for any fixed $(z_0,t_0)$, if $R(x,y,z_0,t_0)$ is not identically zero, then it still contains no monomial divisible by the leading monomial of $G(x,y)$. Hence $R(x,y,z_0,t_0)$ is already reduced with respect to $G(x,y)$.

Now let $(z_0,t_0)$ be a zero of $H(z,t)$. Then
\[
f(x,y,z_0,t_0)\in (G(x,y)).
\]
Since $G(x,y)$ is irreducible, it forms a Gr\"obner basis for the ideal it generates. By the characterization of Gr\"obner bases, the remainder of $f(x,y,z_0,t_0)$ upon division by $G(x,y)$ must be zero. But this remainder is precisely $R(x,y,z_0,t_0)$. Therefore,
\[
R(x,y,z_0,t_0)\equiv 0.
\]

From the previous paragraph, we conclude that $R(x,y,z_0,t_0)$ vanishes identically for every zero $(z_0,t_0)$ of $H(z,t)$. Hence the polynomial
\[
f(x,y,z,t)-G(x,y)p(x,y,z,t)=R(x,y,z,t)
\]
vanishes whenever $H(z,t)=0$. Since $H(z,t)$ is irreducible, it follows that $R(x,y,z,t)$ is divisible by $H(z,t)$. That is, there exists a polynomial $q(x,y,z,t)$ such that
\[
f(x,y,z,t)=G(x,y)p(x,y,z,t)+H(z,t)q(x,y,z,t).
\]
Therefore $f\in (G,H)=\mathcal{I}$, completing the proof.
\end{proof}

The explicit bounds below are required for the algorithm that determines whether a polynomial is Cartesian. ({\bf Remark:} In \cite{MojarradPhamValculescuZeeuw}, at the end of Section 2.3, the authors asserted that for the decomposition $f=Gp+Hq$ as in the proof of the above lemma, we have $\deg (f)\geq \deg (Gp), \deg (Hq)$. However, there is no complete argument to support this claim, and it is unclear if the claim is correct. A reason is that choosing a monomial order which preserves total degrees, as used by the authors in the assertion, does not allow one to show that $f$ can be written as $Gp+Hq$.)

\begin{lemma}\label{DegreeBoundLemma}

\textbf{A)} Let $f(x,y,z,t)$ be a Cartesian polynomial of degree $d$. Then there exist polynomials $G(x,y)$, $H(z,t)$ and $q(x,y,z,t,u)$, $q_1(x,y,z,t,u)$, $q_2(x,y,z,t,u)$ such that:

\begin{enumerate}
\item[(i)] The identity
\[
q(1 - u f) + q_1 G + q_2 H \equiv 1
\]
holds in $\mathbb{C}[x,y,z,t,u]$.

\item[(ii)] The degrees of $G$ and $H$ satisfy
\[
1 \leq \deg G, \deg H \leq d.
\]

\item[(iii)] The degrees of the products satisfy
\[
\deg\big(q(1 - u f)\big),\ \deg(q_1 G),\ \deg(q_2 H)
\;\leq\; \max\{(d+1)d^2, 27\}.
\]
Moreover, when $d=2$, this bound can be improved to $24$.
\end{enumerate}

\medskip

\noindent
\textbf{B)} Conversely, suppose that there exist polynomials $f$, $G$, $H$, $q$, $q_1$, $q_2$, where $G(x,y)$ and $H(z,t)$ are irreducible non-constant polynomials, satisfying conditions \textnormal{(i)}--\textnormal{(iii)} above. Then $f$ is Cartesian.

\end{lemma}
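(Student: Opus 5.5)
The plan is to prove both parts through the Nullstellensatz, using the Rabinowitsch trick in Part A and its standard converse in Part B. The key input is Lemma~\ref{RadicalIdealLemma}: for irreducible non-constant $G(x,y)$ and $H(z,t)$, a polynomial lies in $(G,H)$ exactly when it vanishes on $V(G,H)=Z(G)\times Z(H)$.

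\textbf{Part A, first step: bounding $\deg G$ and $\deg H$.} Write $f=GF_1+HF_2$ with $G,H$ irreducible and non-constant, as allowed by Remark~1.2. We show $\deg G,\deg H\le d$ by a case split.
\begin{itemize}
\item If $f$ vanishes on all of $Z(H)$, then $H\mid f$ since $H$ is irreducible, so $\deg H\le d$. In this case $f\in(H)\subset(x,H)$, so we may replace $G$ by $x$, which has degree $1\le d$.
\item Otherwise pick $(z_0,t_0)\in Z(H)$ with $f(x,y,z_0,t_0)\not\equiv 0$. This one-variable-pair polynomial vanishes on $Z(G)$, because $f$ vanishes on $Z(G)\times Z(H)$. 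Irreducibility of $G$ then gives $G\mid f(x,y,z_0,t_0)$, hence $\deg G\le d$.
\end{itemize}
The symmetric argument bounds $\deg H$. Since $G$ and $H$ are non-constant, this gives (ii).

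\textbf{Part A, second step: the identity (i) with degree bounds (iii).} Because $f\in(G,H)$, the polynomials $1-uf$, $G$, $H$ in $\mathbb{C}[x,y,z,t,u]$ have no common zero: at a point with $G=H=0$ we have $f=0$, so $1-uf=1$. The effective Nullstellensatz (Koll\'ar's version) then produces $q,q_1,q_2$ satisfying (i), with every summand of degree at most the product of the generator degrees. Here we have three generators of degrees $d+1$, $\le d$, $\le d$ in five variables, which gives the bound $(d+1)d^2$. Koll\'ar's theorem has a weaker bound when some degree equals $2$, and this small-degree exception is what the constant $27=3^3$ absorbs. For $d=2$ the degrees are $3,2,2$, and I would invoke a sharper version (Jelonek's, or Koll\'ar's refined statement for degree-$2$ generators) to reach $24$.

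I expect the main obstacle to be this bookkeeping. One must match the exact hypotheses of the chosen effective Nullstellensatz, in particular the degree-$2$ exceptions and whether the bound is stated on the summands or on the $q_i$, so that $\max\{(d+1)d^2,27\}$ and $24$ come out precisely. If a cited version does not give these constants directly, I would first try homogenizing and applying the projective form of the theorem.

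\textbf{Part B.} Suppose (i) holds with $G,H$ irreducible and non-constant. Let $p$ be any point of $V(G,H)$ and evaluate (i) at $(p,u)$, which gives $q(p,u)(1-uf(p))=1$ for every $u$. If $f(p)\neq0$, taking $u=1/f(p)$ yields $0=1$, a contradiction. Hence $f$ vanishes on $V(G,H)$, so $f^N\in(G,H)$ for some $N$ by Hilbert's Nullstellensatz. Lemma~\ref{RadicalIdealLemma} says $(G,H)$ is radical, so $f\in(G,H)$, i.e.\ $f$ is Cartesian. The degree condition (iii) plays no role in Part B; it is only there so that the algorithm can search over a finite-dimensional family of candidates.
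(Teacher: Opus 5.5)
Your proposal is correct and follows the paper's route. For Part A you use the Rabinowitsch trick plus an effective Nullstellensatz, with the $\max\{3,\cdot\}$ adjustment accounting for the constant $27$; for Part B you show $f$ vanishes on $V(G,H)$ and then use radicality of $(G,H)$ from Lemma~\ref{RadicalIdealLemma}.

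You are in fact more careful than the paper at two points:
\begin{itemize}
\item Your case split (replacing $G$ by $x$ when $H\mid f$, and symmetrically) actually proves the degree bound (ii), which the paper only attributes to Lemma~\ref{RadicalIdealLemma}.
\item In Part B, you evaluate (i) at points $p\in V(G,H)$, which forces $1-uf(p)$ to be a unit in $\mathbb{C}[u]$. This is the right way to get $f\in\sqrt{(G,H)}$. The paper instead starts from points of $\{f=0\}$, and that argument does not by itself give the containment.
\end{itemize}
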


\begin{proof}

\textbf{A)} Since $f$ is Cartesian, there exist irreducible non-constant polynomials $G(x,y)$ and $H(z,t)$ such that
\[
f \in (G,H).
\]
By Lemma~\ref{RadicalIdealLemma}, we may choose $G$ and $H$ so that
\[
1 \leq \deg G, \deg H \leq d,
\]
which establishes condition \textnormal{(ii)}.

\medskip

To obtain condition \textnormal{(i)}, we introduce a new variable $u$ and consider the polynomials $1 - u f$, $G$, and $H$. Since $f \in (G,H)$, these polynomials generate the unit ideal in $\mathbb{C}[x,y,z,t,u]$. Therefore, by the effective Hilbert Nullstellensatz, there exist polynomials $q, q_1, q_2 \in \mathbb{C}[x,y,z,t,u]$ such that
\[
q(1 - u f) + q_1 G + q_2 H = 1.
\]

\medskip

Furthermore, degree bounds in the effective Nullstellensatz (see \cite{Brownawell} \cite{Kollar}\cite{Sombra}) imply that, if
\[
\deg f, \deg G, \deg H \leq d,
\]
then one may choose $q, q_1, q_2$ so that
\[
\deg\big(q(1 - u f)\big),\ \deg(q_1 G),\ \deg(q_2 H)
\;\leq\; \max\{(d+1)d^2, 27\}.
\]
When $d=2$, this bound improves to $24$. This establishes condition \textnormal{(iii)}.

\medskip

This completes the proof of part \textbf{A}.

\medskip

\textbf{B)} Suppose that condition \textnormal{(i)} holds. Let $(x,y,z,t)$ be a point such that $f(x,y,z,t)=0$. Then
\[
q(1 - u f) + q_1 G + q_2 H = 1
\quad \Longrightarrow \quad
q + q_1 G + q_2 H = 1,
\]
and hence $G(x,y)$ and $H(z,t)$ cannot both vanish at this point. Therefore,
\[
\{f = 0\} \subset \{G = 0\} \cup \{H = 0\}.
\]

Equivalently, every common zero of $G$ and $H$ is a zero of $f$, and hence
\[
f \in \sqrt{(G,H)}.
\]

\medskip

Since $G$ and $H$ are irreducible and non-constant, Lemma~\ref{RadicalIdealLemma} implies that the ideal $(G,H)$ is radical. Therefore,
\[
f \in (G,H),
\]
and hence $f$ is Cartesian.

\end{proof}

\subsection{The closure of the set of Cartesian polynomials} 

Fix an integer $d\geq 3$. Here we first demonstrate that the set of Cartesian polynomials of degree at most $d$ is not closed (in the Zariski topology). This is the same as that the set of Cartesian polynomials is not closed in the usual topology on the same set, regarded as an affine space. To this end, let $F(x,y,z,t)$ be a {\bf non-Cartesian} polynomial of degree at most $d-1\geq 2$ (whose existence is discussed in the following sections). For each $n$, the polynomial $F_n(x,y,z,t)=(1+\frac{x}{n})F(x,y,z,t)$ is a Cartesian polynomial of degree at most $d$, and the sequence $\{F_n\}$ converges to the polynomial $F(x,y,z,t)$ which is non-Cartesian.  

Since the set of Cartesian polynomials of degree $\leq d$ is a union of images of a finite set of  polynomial maps between affine spaces  (by results in the previous Subsections), both it and its closure can be computed by Gr\"obner basis techniques, see \cite{Barakat-Hegermann}.

\section{Genericity of non-Cartesian polynomials on $\mathbb{C}^2\times \mathbb{C}^2$}

In this section, we present several results on constructing non-Cartesian polynomials. These results also allow us to show that a generic polynomial is non-Cartesian.

First, we provide an efficient sufficient criterion for a polynomial to be non-Cartesian. 

\begin{theorem}\label{TheoremSufficientConditions}
Assume that $F(x,y,z,t)$ is a non-zero polynomial satisfying the following conditions:
\begin{enumerate}
\item $F$ is irreducible and $F \notin \mathbb{C}[z,t]$.

\item The set
\[
\{(z_0,t_0)\in \mathbb{C}^2 : F(x,y,z_0,t_0)\ \text{is reducible in } \mathbb{C}[x,y]\}
\]
is finite.

\item For each $(z_0,t_0)\in \mathbb{C}^2$, the set
\[
\left\{(z_1,t_1)\in \mathbb{C}^2 :
\exists \lambda \in \mathbb{C} \text{ such that }
F(x,y,z_0,t_0) = \lambda F(x,y,z_1,t_1)
\right\}
\]
is finite.
\end{enumerate}
Then $F$ is non-Cartesian.
\end{theorem}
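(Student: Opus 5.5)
We argue by contradiction, working on the zero set of $H$ and restricting to fibres. Suppose $F$ is Cartesian. By Remark~1.2 we may write
\[
F(x,y,z,t)=G(x,y)F_1(x,y,z,t)+H(z,t)F_2(x,y,z,t)
\]
with $G\in\mathbb{C}[x,y]$ and $H\in\mathbb{C}[z,t]$ irreducible and non-constant. Let $C=\{(z_0,t_0)\in\mathbb{C}^2: H(z_0,t_0)=0\}$. Since $H$ is a non-constant polynomial in two variables, $C$ is an irreducible plane curve, and in particular an infinite set.

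For every $(z_0,t_0)\in C$, specialising the identity gives
\[
F(x,y,z_0,t_0)=G(x,y)F_1(x,y,z_0,t_0).
\]
Thus $G$ divides the fibre polynomial $F_{(z_0,t_0)}(x,y):=F(x,y,z_0,t_0)$ in $\mathbb{C}[x,y]$. The plan is to show that this divisibility, holding along the whole infinite set $C$, is incompatible with conditions (1)--(3).

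\textbf{Step 1 (vanishing fibres).} Write $F=\sum_\alpha c_\alpha(z,t)\,x^{\alpha_1}y^{\alpha_2}$. The set $Z=\{(z_0,t_0): F_{(z_0,t_0)}\equiv 0\}$ is the common zero set of the coefficients $c_\alpha$, so it is Zariski closed.
\begin{itemize}
\item If $Z$ contains infinitely many points of $C$, then by irreducibility of $C$ it contains all of $C$.
\item In that case every $c_\alpha$ vanishes on $\{H=0\}$, so $H\mid c_\alpha$ for all $\alpha$, since $H$ is irreducible.
\item Hence $H\mid F$. Because $F$ is irreducible, $F=cH\in\mathbb{C}[z,t]$ for some constant $c$, contradicting condition (1).
\end{itemize}
So $Z\cap C$ is finite. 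This is the step I expect to need the most care: the Nullstellensatz argument for an irreducible curve, and the use of $F\notin\mathbb{C}[z,t]$.

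\textbf{Step 2 (generic fibres).} Remove from $C$ the finite set $Z\cap C$ and the finite set of condition (2). Infinitely many points $(z_0,t_0)\in C$ remain. At each of them, $F_{(z_0,t_0)}$ is nonzero, irreducible in $\mathbb{C}[x,y]$, and divisible by the non-constant polynomial $G$.
\begin{itemize}
\item $F_{(z_0,t_0)}$ cannot be a nonzero constant, since a non-constant $G$ cannot divide a nonzero constant.
\item Since $F_{(z_0,t_0)}$ is irreducible and divisible by $G$, we get $F_{(z_0,t_0)}=\lambda_{(z_0,t_0)}G$ with $\lambda_{(z_0,t_0)}\in\mathbb{C}^*$.
\end{itemize}

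\textbf{Step 3 (conclusion).} Fix one such point $(z_0,t_0)$. For every other such point $(z_1,t_1)$ we have
\[
F_{(z_0,t_0)}=\bigl(\lambda_{(z_0,t_0)}/\lambda_{(z_1,t_1)}\bigr)\,F_{(z_1,t_1)}.
\]
This produces infinitely many $(z_1,t_1)$ in the set of condition (3) attached to $(z_0,t_0)$, contradicting its finiteness. Hence $F$ is non-Cartesian.
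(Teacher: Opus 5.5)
Your proof is correct and follows the same route as the paper. Both arguments restrict $F=GF_1+HF_2$ to zeros of $H$, use condition (2) to force $F(x,y,z_0,t_0)=\lambda\, G$ at all but finitely many points of $\{H=0\}$, and then contradict condition (3). Your Step 1, which shows $H\nmid F$ via the coefficients $c_\alpha$, and your Step 3, which produces infinitely many points rather than just two in the set of condition (3), spell out details that the paper's proof leaves implicit.
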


\begin{proof}
Assume, for contradiction, that $F$ is Cartesian. Then there exist non-constant polynomials $G(x,y)$ and $H(z,t)$, together with polynomials $F_1(x,y,z,t)$ and $F_2(x,y,z,t)$, such that
\[
F = G F_1 + H F_2.
\]
Without loss of generality, we may assume that both $G$ and $H$ are irreducible.

\medskip

Since $F$ is irreducible and $F \notin \mathbb{C}[z,t]$, while $H(z,t)$ is non-constant, it follows that $F$ does not divide $H$. Consequently, for a generic zero $(z_0,t_0)$ of $H(z,t)$ (note that $H$ has infinitely many zeros), the specialization $F(x,y,z_0,t_0)$ is not identically zero.

\medskip

Substituting such a point $(z_0,t_0)$ into the identity $F = G F_1 + H F_2$, we obtain
\[
F(x,y,z_0,t_0) = G(x,y)\,F_1(x,y,z_0,t_0).
\]
Thus, $F(x,y,z_0,t_0)$ admits a nontrivial factorization unless $F_1(x,y,z_0,t_0)$ is a constant.

\medskip

By condition (2), the set of $(z_0,t_0)$ for which $F(x,y,z_0,t_0)$ is reducible is finite. Hence, for all but finitely many zeros $(z_0,t_0)$ of $H$, the polynomial $F(x,y,z_0,t_0)$ is irreducible. Since $G(x,y)$ is non-constant, this forces
\[
F_1(x,y,z_0,t_0) \in \mathbb{C} \setminus \{0\}.
\]

\medskip

Now choose two distinct generic zeros $(z_0,t_0)$ and $(z_1,t_1)$ of $H$ satisfying the above property. Then
\[
F(x,y,z_0,t_0) = G(x,y)\,F_1(x,y,z_0,t_0), 
\qquad
F(x,y,z_1,t_1) = G(x,y)\,F_1(x,y,z_1,t_1),
\]
where both $F_1(x,y,z_0,t_0)$ and $F_1(x,y,z_1,t_1)$ are non-zero constants.

\medskip

It follows that there exists $\lambda \in \mathbb{C}$ such that
\[
F(x,y,z_0,t_0) = \lambda F(x,y,z_1,t_1),
\]
which contradicts condition (3) and completes the proof.
\end{proof}

\begin{theorem}
For a generic polynomial $F(x,y,z,t)$ of degree $d\geq 3$, the following properties are satisfied: 

1. $F(x,y,z,t)$ is irreducible and does not belong to $\mathbb{C}[z,t]$. 

2. There are at most finitely many $(z_0,t_0)$ such that $F(x,y,z_0,t_0)$ is reducible. 

3. If $F(x,y,z_0,t_0)=\lambda F(x,y,z_1,t_1)$ as polynomials in $\mathbb{C}[x,y]$, for some complex constants $z_1,t_1,\lambda \in \mathbb{C}$, then $\lambda =1$ and $(z_1,t_1)=(z_0,t_0)$. 

\label{TheoremGenericPolynomials}
\end{theorem}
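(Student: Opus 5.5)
Each of the three properties is a Zariski-open condition on the coefficient vector of $F$ in the affine space $V_d$ of polynomials of degree at most $d$. I will check openness and then exhibit one polynomial satisfying each property. Since $V_d$ is irreducible, the intersection of three nonempty Zariski-open sets is still nonempty and open, so it is generic. This reduces the theorem to showing that each bad set lies in a proper closed subset.

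\textbf{Property 1.} The reducible polynomials of degree $\le d$ form the union over $1\le e\le d-1$ of the images of the multiplication maps $V_e\times V_{d-e}\to V_d$. By dimension count, each image is a proper closed subset; the images are closed because the maps come from projective morphisms on the projectivized spaces. The polynomials in $\mathbb{C}[z,t]$ form a proper linear subspace.

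\textbf{Property 3.} Write $F=\sum_{\alpha}c_\alpha(z,t)\,m_\alpha(x,y)$, where the $m_\alpha$ are monomials in $x,y$. For a generic $F$ of degree $d\ge 3$, the coefficients of $x$ and of $y$ are affine-linear functions $a+bz+ct$ and $a'+b'z+c't$ with independent linear parts. The coefficient of $1$ is $c_0(z,t)$, which is a generic polynomial of degree $d$.

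Suppose $F(\cdot,z_0,t_0)=\lambda F(\cdot,z_1,t_1)$. Comparing the coefficient of $x^d$, which is a nonzero constant, forces $\lambda=1$. Then the equalities for the coefficients of $x$ and $y$ are two independent linear equations in $(z_1-z_0,t_1-t_0)$, so they force $(z_1,t_1)=(z_0,t_0)$. Independence of the linear parts is itself a nonvanishing determinant condition, so it is open, and it holds for instance for $x z+y t+\ldots$. Openness of the whole property is automatic here, since it is implied by the open condition just described.

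\textbf{Property 2.} This is the main obstacle. Consider the incidence variety
\[
\mathcal{R}=\{(F,z_0,t_0): F(x,y,z_0,t_0)\ \text{reducible}\}\subset V_d\times\mathbb{C}^2 ,
\]
which is constructible and, after the same projective argument, closed in its first factor. The requirement is that the projection $\mathcal{R}\to V_d$ has finite generic fiber. By upper semicontinuity of fiber dimension, it suffices to show $\dim\mathcal{R}\le\dim V_d$.

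To bound $\dim\mathcal{R}$, I fix $(z_0,t_0)$. The specialization map $F\mapsto F(\cdot,z_0,t_0)$ from $V_d$ onto the space $W_d$ of degree $\le d$ polynomials in $x,y$ is a linear surjection. So the fiber of $\mathcal{R}$ over $(z_0,t_0)$ has codimension in $V_d$ equal to the codimension of the reducible locus in $W_d$. That codimension is
\[
\dim W_d-\max_{e}\big(\dim W_e+\dim W_{d-e}-1\big).
\]
With $e=1$ this equals $\binom{d+2}{2}-\binom{d+1}{2}-3+1=d-1$, and I would check that $e=1$ gives the maximum.

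Since $d-1\ge 2$ for $d\ge3$, each fiber has codimension at least $2$. Therefore
\[
\dim\mathcal{R}\le 2+\dim V_d-2=\dim V_d ,
\]
so the generic fiber of $\mathcal{R}\to V_d$ is finite. This is precisely where the hypothesis $d\ge3$ is used.

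The delicate points are confirming that the maximum over $e$ is attained at $e=1$, and arguing that the closure of $\mathcal{R}$ behaves well enough for the semicontinuity argument. I would handle the latter by working with the closed condition ``$F(\cdot,z_0,t_0)$ is reducible or of degree $<d$'' inside $V_d\times\mathbb{C}^2$.
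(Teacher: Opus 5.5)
Your arguments for Properties 1 and 2 are correct and essentially the paper's. For Property 2 you fiber the incidence set over $(z_0,t_0)\in\mathbb{C}^2$ and use that specialization $V_d\to W_d$ is a linear surjection, while the paper fibers over $\mathcal{P}_{2;d}$ via the evaluation map. Both give $\dim\mathcal{R}\le\dim V_d$. The maximum over $e$ is indeed at $e=1$, since the image of $W_e\times W_{d-e}$ has codimension $e(d-e)$. You only need the failure locus of Property 2 to lie in a proper closed set, which your count gives. You have not shown, and do not need, that Property 2 is itself an open condition.

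The genuine gap is in Property 3. In the expansion $F=\sum_\alpha c_\alpha(z,t)m_\alpha(x,y)$, the coefficient of the monomial $x$ collects all terms $x\,z^at^b$ with $1+a+b\le d$. So for generic $F$ it has degree $d-1\ge 2$ in $(z,t)$, not $\le 1$, and likewise for $y$. Hence the equations $c_x(z_1,t_1)=c_x(z_0,t_0)$ and $c_y(z_1,t_1)=c_y(z_0,t_0)$ are not linear in $(z_1-z_0,t_1-t_0)$. They cut out two curves of degree $d-1$ through $(z_0,t_0)$, which in general meet in further points (up to $(d-1)^2$ in total by B\'ezout). So $(z_1,t_1)=(z_0,t_0)$ does not follow, and your witness $xz+yt+\cdots$ only controls part of these coefficients.

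The repair is to use monomials of degree $d-1$ in $(x,y)$ instead. Because $\deg F\le d$, the coefficients of $x^{d-1}$ and $y^{d-1}$ really are affine-linear, namely $e_1+c_{11}z+c_{12}t$ and $e_2+c_{21}z+c_{22}t$. After $\lambda=1$ from the $x^d$ coefficient, equating them gives $A\,(z_0,t_0)^{T}=A\,(z_1,t_1)^{T}$ with $A=(c_{ij})$. The open condition $a_0\neq 0$, $\det A\neq 0$ (witnessed by $x^d+y^d+x^{d-1}z+y^{d-1}t$) then finishes the proof. This is exactly the paper's argument.
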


\begin{proof}

We first recall the following fact: the space $\mathcal{P}_{n;d}$ of polynomials in $n$ variables of degree at most $d$ is a vector space of dimension $\binom{d+n}{n}$. 

On the other hand, the set $\mathcal{P}_{n;d_1,d_2}$ of polynomials in $n$ variables which are a product of two polynomials of degrees $d_1,d_2\geq 1$ has dimension 
\[
\binom{d_1+n}{n}+\binom{d_2+n}{n}-1.
\]
Here we subtract $1$ from the sum $\binom{d_1+n}{n}+\binom{d_2+n}{n}$ because in a product $P_1P_2$, we can normalise so that the factor $P_2$ has leading coefficient $1$.  

1. The set of reducible polynomials in $\mathcal{P}_{n;d}$ is therefore a Zariski constructible subset (this is similar to the way we showed above that the set of Cartesian polynomials with degree at most $d$ is constructible), whose dimension is at most 
\[
\max_{d_1,d_2\geq 1,\ d_1+d_2=d}\left(\binom{d_1+n}{n}+\binom{d_2+n}{n}-1\right).
\]

Note that each of the terms $\binom{d_1+n}{n}+\binom{d_2+n}{n}$ is $\leq \binom{d+n}{n}$ (see below for more precise estimates), and hence $\mathcal{P}_{n;d_1,d_2}$ has codimension at least $1$ in $\mathcal{P}_{n;d}$. Therefore, a generic polynomial in $\mathcal{P}_{n;d}$ is irreducible. Applying this for $n=4$, the proof of (1) is complete.  

2. In this case, we apply the above arguments for $n=2$. We first show that when $d\geq 3$, for integers $d_1,d_2\geq 1$ with $d_1+d_2=d$, we have
\[
\binom{d_1+2}{2}+\binom{d_2+2}{2}-1 \leq \binom{d+2}{2}-2,
\]
i.e. the Zariski constructible set $\mathcal{P}_{2;d_1,d_2}$ has codimension at least $2$ in $\mathcal{P}_{2;d}$. 

In fact, this amounts to 
\[
\frac{d_1^2+3d_1+2}{2}+\frac{d_2^2+3d_2+2}{2}-1 \leq \frac{d^2+3d+2}{2}-2.
\]
Using $d=d_1+d_2$ and 
\[
d^2=(d_1+d_2)^2=(d_1^2+d_2^2)+2d_1d_2 \geq (d_1^2+d_2^2)+4,
\]
the inequality reduces to
\[
d_1d_2-2\geq 0,
\]
which holds under the assumptions $d_1,d_2\geq 1$ and $d_1+d_2=d\geq 3$. 

Now we consider the evaluation map
\[
\varphi : \mathcal{P}_{4;d}\times \mathbb{C}^2 \to \mathcal{P}_{2;d},
\]
given by $\varphi(F,(z,t))=$ the polynomial $(x,y)\mapsto F(x,y,z,t)$. Let 
\[
\mathcal{I}=\varphi^{-1}(0)\subset \mathcal{P}_{4;d}\times \mathbb{C}^2
\]
(here $0\in \mathcal{P}_{2;d}$ is the constant zero polynomial). 

Note that $\mathcal{I}$ is an algebraic subvariety of $\mathcal{P}_{4;d}\times \mathbb{C}^2$, and any of its irreducible components has dimension at least the relative dimension of $\varphi$ (which is, by definition, the difference between the dimension of the source $\mathcal{P}_{4;d}\times \mathbb{C}^2$ and the dimension of the target $\mathcal{P}_{2;d}$). 

For any $h\in \mathcal{P}_{2;d}$, we have 
\[
\varphi^{-1}(h)=(h,(0,0))+\mathcal{I},
\]
which is the translation of $\mathcal{I}$. Indeed, $(F,(z,t))\in \varphi^{-1}(h)$ if and only if $(F-h,(z,t))\in \varphi^{-1}(0)$. 

It follows that $\varphi$ is surjective and equidimensional: every fiber has dimension equal to the relative dimension of $\varphi$. 

[Indeed, from the description of $\varphi^{-1}(h)$ above, all fibers are isomorphic to $\mathcal{I}$. Let $m$ be the dimension of $\mathcal{I}$ (i.e. the maximum dimension of its irreducible components). Then 
\[
m+\dim(\mathcal{P}_{2;d})=\dim(\mathcal{P}_{4;d}\times \mathbb{C}^2),
\]
so $m$ equals the relative dimension of $\varphi$. Moreover, every irreducible component of $\mathcal{I}$ has dimension $\geq$ the relative dimension, by the Fiber Dimension Theorem.]

Therefore, if $\mathcal{R}_{2;d}\subset \mathcal{P}_{2;d}$ is the set of reducible polynomials, then 
\[
\varphi^{-1}(\mathcal{R}_{2;d})\subset \mathcal{P}_{4;d}\times \mathbb{C}^2
\]
is Zariski constructible of codimension equal to that of $\mathcal{R}_{2;d}$ in $\mathcal{P}_{2;d}$, and hence $\geq 2$.

Now we complete the proof that for a generic polynomial $F\in \mathcal{P}_{4;d}$, the set
\[
W(F)=\{(z_0,t_0)\in \mathbb{C}^2: F(x,y,z_0,t_0)\ \text{is reducible}\}
\]
is finite. 

Let $\pi_1:\mathcal{P}_{4;d}\times \mathbb{C}^2 \to \mathcal{P}_{4;d}$ be the natural projection. Then $\pi_1$ is surjective, and all its fibers have dimension $2$. It is easy to check that
\[
W(F)=\pi_1^{-1}(\{F\})\cap \varphi^{-1}(\mathcal{R}_{2;d}).
\]

Hence, for a generic $F$, $W(F)$ has codimension $\geq 2$ in $\mathbb{C}^2$, and therefore is finite. 

[Alternatively, if for a generic $F$ the set $\pi_1^{-1}(\{F\})\cap \varphi^{-1}(\mathcal{R}_{2;d})$ were not finite, then its dimension would be at least $1$. This would imply that $\varphi^{-1}(\mathcal{R}_{2;d})$ has dimension at least $1+\dim(\mathcal{P}_{4;d})$, hence codimension $\leq 1$, contradicting the previously established codimension $\geq 2$.]

3. Assume that 
\[
F(x,y,z,t)=a_0x^d+b_0y^d+(c_{1,1}z+c_{1,2}t)x^{d-1}+(c_{2,1}z+c_{2,2}t)y^{d-1}+\ldots
\]
with $a_0,b_0\neq 0$, and 
\[
A=\begin{pmatrix} c_{1,1} & c_{1,2} \\ c_{2,1} & c_{2,2} \end{pmatrix}
\]
invertible. The set of such polynomials is Zariski open dense in $\mathcal{P}_d$. 

Now assume that 
\[
F(x,y,z_0,t_0)=\lambda F(x,y,z_1,t_1)
\]
in $\mathbb{C}[x,y]$, for some $\lambda,z_0,t_0,z_1,t_1\in \mathbb{C}$. By comparing the coefficient of $x^d$, we obtain $\lambda=1$. Then, comparing the coefficients of $x^{d-1}$ and $y^{d-1}$, we obtain 
\[
A\cdot (z_0,t_0)=A\cdot (z_1,t_1).
\]
Since $A$ is invertible, this implies $(z_0,t_0)=(z_1,t_1)$. 

\end{proof}

In the above theorem, note that the proofs of Properties 1 and 3 are also valid for $d=2$, while Property 2 is not satisfied for $d=2$ by the following lemma.

\begin{lemma}
Let $F(x,y,z,t)$ be a generic polynomial of degree $2$. Then there are infinitely many $(z_0,t_0)$ such that $F(x,y,z_0,t_0)$ is reducible. 
\label{LemmaDegree2Polynomials}
\end{lemma}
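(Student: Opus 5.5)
The plan is to reduce the statement to the classical fact that a plane conic is reducible exactly when its $3\times 3$ symmetric coefficient matrix is singular. Then we check that, for generic $F$, this determinant condition cuts out a curve in the $(z,t)$-plane.

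\textbf{Step 1: the fibre conic.} For fixed $(z_0,t_0)$, the specialization $F(x,y,z_0,t_0)$ is a polynomial of degree at most $2$ in $(x,y)$. Write it as
\[
F(x,y,z_0,t_0)=Q(x,y)+\ell_{z_0,t_0}(x,y)+c(z_0,t_0),
\]
where the pieces are as follows.
\begin{itemize}
\item $Q$ is the quadratic part in $(x,y)$ of $F$. Since $\deg F\le 2$, any monomial containing $x^2$, $xy$ or $y^2$ has no room for $z,t$, so $Q$ is independent of $(z_0,t_0)$.
\item $\ell_{z_0,t_0}$ is linear in $(x,y)$, with coefficients affine-linear in $(z_0,t_0)$.
\item $c(z_0,t_0)$ is a polynomial of degree at most $2$ in $(z_0,t_0)$.
\end{itemize}
Homogenize with a variable $w$ and form the symmetric matrix
\[
M(z_0,t_0)=\begin{pmatrix} Q & \tfrac12\ell \\ \tfrac12\ell^{T} & c\end{pmatrix}.
\]
Its entries are polynomials in $(z_0,t_0)$ of degree at most $0$, $1$ and $2$ respectively. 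Hence $D(z,t):=\det M(z,t)$ is a polynomial in $(z,t)$. A ternary quadratic form is reducible over $\mathbb{C}$ iff its matrix has rank $\le 2$. Moreover, $F(x,y,z_0,t_0)$ is reducible in $\mathbb{C}[x,y]$ iff its homogenization is reducible, since $Q\ne 0$ generically and so the degree is exactly $2$. Therefore the reducible fibres are exactly the zeros of $D$.

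\textbf{Step 2: $D$ is a non-constant polynomial for generic $F$.} It then has infinitely many zeros in $\mathbb{C}^2$, which proves the lemma. Since non-constancy of $D$ is a Zariski open condition on the coefficients of $F$, it suffices to exhibit one example. I would take
\[
F=x^2+y^2+z\quad\text{or}\quad F=x^2+y^2+xz+1 .
\]
For the first choice, $F(x,y,z_0,t_0)=x^2+y^2+z_0$, which is reducible exactly when $z_0=0$; this already gives the infinite line $\{z_0=0\}$. Correspondingly, $D=\det\operatorname{diag}(1,1,z)=z$ is non-constant. Alternatively, one can argue directly: expanding the determinant gives
\[
D=\det(Q)\,c(z,t)-\tfrac14\,\ell^{T}\operatorname{adj}(Q)\,\ell .
\]
For generic $F$, $\det Q\ne0$ and $c$ has a nonzero $z^2$ coefficient that can be varied independently of everything else, so $D$ has degree exactly $2$ in $(z,t)$.

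\textbf{Main obstacle.} The only subtle point is making sure the reducibility criterion is applied correctly in the affine setting, for two reasons.
\begin{itemize}
\item Singularity of $M$ gives reducibility of the homogenized form; dehomogenizing at $w=1$ preserves a nontrivial factorization provided neither factor is a power of $w$. This holds because the $x,y$-quadratic part $Q\neq0$.
\item We must also exclude the degenerate case that $D$ vanishes identically or is a nonzero constant, which is exactly what the genericity argument above handles.
\end{itemize}
Once these are settled, the zero set of the non-constant polynomial $D$ in $\mathbb{C}^2$ is an affine curve, hence infinite. This contrasts with Property 2 of Theorem~\ref{TheoremGenericPolynomials}, because here the reducible locus has codimension only $1$.
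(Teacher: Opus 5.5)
Your proposal is correct and follows essentially the same route as the paper: reducibility of the fibre conic is encoded by the vanishing of the $3\times 3$ determinant $\Delta(z,t)$, and $\Delta$ is shown to be non-constant for generic $F$ because non-constancy is a Zariski-open condition, witnessed by an explicit example (the paper uses $x^2+y^2+zx+ty$, you use $x^2+y^2+z$). You also spell out two points the paper leaves implicit: the dehomogenization step, which uses that the $(x,y)$-quadratic part is independent of $(z,t)$ and nonzero, and the fact that $\Delta$ generically has degree exactly $2$.
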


\begin{proof}

Since $F\in\mathbb{C}[x,y,z,t]$ is of degree $2$, it can be written as
\begin{equation}\label{eq:quad_decomp}
F(x,y,z,t)
=
Q(x,y)
+ z\,L_1(x,y)
+ t\,L_2(x,y)
+ R(z,t),
\end{equation}
where $Q(x,y)$ is quadratic in $(x,y)$, $L_1,L_2$ are linear forms in $(x,y)$,
and $R(z,t)$ is a quadratic polynomial in $(z,t)$.

Now fix $(z,t)\in\mathbb{C}^2$. The fiber
\[
F_{z,t}(x,y):=F(x,y,z,t)
\]
is a quadratic polynomial in $\mathbb{C}[x,y]$, hence can be written as
\begin{equation}\label{eq:fiber_coeffs}
F_{z,t}(x,y)
=
a(z,t)x^2+b(z,t)xy+c(z,t)y^2
+d(z,t)x+e(z,t)y+f(z,t),
\end{equation}
where each coefficient $a,b,c,d,e,f$ depends polynomially on $(z,t)$.

A quadratic polynomial in $\mathbb{C}[x,y]$ is reducible if and only if its
homogenization
\[
\widetilde F(x,y,u)
=
ax^2+bxy+cy^2+dxu+eyu+fu^2
\]
factors into linear forms in $\mathbb{C}[x,y,u]$.
Equivalently, if
\[
\widetilde F(x,y,u)
=
(x,y,u)\,M\,(x,y,u)^{\mathsf T},
\qquad
M=
\begin{pmatrix}
a & b/2 & d/2\\
b/2 & c & e/2\\
d/2 & e/2 & f
\end{pmatrix},
\]
then $\widetilde F$ is reducible if and only if $\det M=0$.
Thus $F_{z,t}$ is reducible in $\mathbb{C}[x,y]$ if and only if
\begin{equation}\label{eq:discriminant}
\Delta(z,t)
:=
\det
\begin{pmatrix}
a(z,t) & b(z,t)/2 & d(z,t)/2\\
b(z,t)/2 & c(z,t) & e(z,t)/2\\
d(z,t)/2 & e(z,t)/2 & f(z,t)
\end{pmatrix}
=0.
\end{equation}

The quantity $\Delta(z,t)$ is a polynomial function of $(z,t)$. If it is not a non-zero constant, then there are infinitely many $(z,t)$ for which $\Delta(z,t)=0$, and hence $F_{z,t}(x,y)$ is reducible. 

Now, the condition $\Delta(z,t)\equiv 0$ (or even $\Delta(z,t)\equiv \text{constant}$)
imposes polynomial equations on the coefficients of $F$, and hence defines a
Zariski closed subset of the space of quadratic polynomials.
This subset is proper. For example,
\[
F(x,y,z,t)=x^2+y^2+zx+ty
\]
yields $\Delta(z,t)=-(z^2+t^2)$, which is not constant.

Therefore, for generic $F$, the polynomial $\Delta(z,t)$ is non-constant. It then follows from the above arguments that for a generic $F$, there are infinitely many $(z,t)$ for which $F_{z,t}(x,y)$ is reducible. 

\end{proof}

Instead of the above negative result, we can still obtain the following.

\begin{proposition}\label{Proposition1}
A generic polynomial $F(x,y,z,t)$ of degree $d \geq 2$ is non-Cartesian. Moreover, for any polynomial $G$, the set
\[
\{a \in \mathbb{C} : F + aG \text{ is Cartesian}\}
\]
is finite.
\end{proposition}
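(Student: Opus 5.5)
The plan is to show that the set $\mathcal C_d$ of Cartesian polynomials of degree at most $d$ is contained in a \emph{proper} Zariski closed subset $Z_d\subsetneq\mathcal P_{4;d}$. Both assertions then follow. First, a generic $F$ lies outside $Z_d$ and is therefore non-Cartesian. Second, fix $F\notin Z_d$ and any $G$. The set $\{a: F+aG\in\mathcal C_d\}$ is the preimage of the constructible set $\mathcal C_d$ under the affine line $a\mapsto F+aG$ (taking $d\ge\deg G$), so it is a constructible subset of $\mathbb C$, i.e.\ finite or cofinite. If it were cofinite, the whole line would lie in $\overline{\mathcal C_d}\subset Z_d$; in particular $F=F+0\cdot G$ would, a contradiction. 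Constructibility of $\mathcal C_d$ is taken from Section~2 (Lemma~\ref{DegreeBoundLemma} together with the remark in Subsection~2.3). However, I want the dimension count below to avoid the degree bounds for $F_1,F_2$, since the paper flags that issue as delicate.

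\textbf{Step 1 (a necessary condition).} If $F=GF_1+HF_2$ with $G(x,y)$, $H(z,t)$ irreducible, then $F$ vanishes on the surface $S_{G,H}=C_G\times C_H$, where $C_G=\{G=0\}\subset\mathbb C^2$ and $C_H=\{H=0\}$. Moreover, as in Lemma~\ref{DegreeBoundLemma}A(ii), we may take $\deg G=a$ and $\deg H=b$ with $1\le a,b\le d$. Indeed, restricting $F$ to a generic fibre over a point of $C_H$ shows that $G$ divides a nonzero polynomial of degree $\le d$ (this is exactly as in the proof of Theorem~\ref{TheoremSufficientConditions}), and symmetrically for $H$. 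So I define the incidence variety
\[
\mathcal J_{a,b}=\{(F,[G],[H]) : F|_{S_{G,H}}\equiv 0\}\subset \mathcal P_{4;d}\times\mathbb P(\mathcal P_{2;a})\times\mathbb P(\mathcal P_{2;b}).
\]
Let $Z_d$ be the union over $a,b$ of the closures of the projections of the $\mathcal J_{a,b}$ to $\mathcal P_{4;d}$. Then $\mathcal C_d\subset Z_d$.

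\textbf{Step 2 (dimension count).} The fibre of $\mathcal J_{a,b}$ over $([G],[H])$ is a linear subspace of $\mathcal P_{4;d}$. Its codimension equals $r_{a,b}$, the dimension of the space of restrictions of degree-$\le d$ polynomials to $C_G\times C_H$. So it suffices to prove
\[
r_{a,b} > \Big(\tbinom{a+2}{2}-1\Big)+\Big(\tbinom{b+2}{2}-1\Big)\qquad (1\le a,b\le d,\ d\ge2).
\]
To bound $r_{a,b}$ from below, I will use that restrictions of polynomials of degree $\le k$ to an irreducible plane curve of degree $a$ form a space of dimension $h_a(k)=\binom{k+2}{2}-\binom{k-a+2}{2}$. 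This holds because the degree-$\le k$ part of the ideal $(G)$ in two variables is exactly $G\cdot\mathcal P_{2;k-a}$, since $\deg(Gp)=\deg G+\deg p$. Products $p(x,y)q(z,t)$ with $\deg p+\deg q\le d$ restrict to functions on the product of curves. Choosing filtration-adapted bases on each curve, the resulting functions are linearly independent, because $\mathbb C[C_G\times C_H]=\mathbb C[C_G]\otimes\mathbb C[C_H]$. This gives
\[
r_{a,b}\ \ge\ \sum_{i+j\le d}\big(h_a(i)-h_a(i-1)\big)\big(h_b(j)-h_b(j-1)\big).
\]
For example, when $a=b=d$ this is $\binom{d+4}{4}-2$, which exceeds $2\binom{d+2}{2}-2$ for every $d\ge2$. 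For $d=2$, $a=b=1$ it gives $6>4$.

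\textbf{Main obstacle.} The main difficulty is verifying this inequality uniformly in $(a,b,d)$. I would first check the extreme cases $a=b=1$ and $a=b=d$. Then I would argue that, for fixed $d$, the right-hand side grows more slowly in $a$ and $b$ than the lower bound for $r_{a,b}$ does. If the general comparison proves messy, a fallback for the genericity statement alone is to exhibit one explicit non-Cartesian polynomial of each degree $d$ and use semicontinuity; for $d\ge3$ this is already supplied by Theorems~\ref{TheoremSufficientConditions} and~\ref{TheoremGenericPolynomials}. This fallback still requires $\overline{\mathcal C_d}\ne\mathcal P_{4;d}$, however, which is exactly what the dimension count delivers. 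For that reason I regard Step~2 as the essential point, especially for $d=2$, where Lemma~\ref{LemmaDegree2Polynomials} rules out the route via Theorem~\ref{TheoremSufficientConditions}.
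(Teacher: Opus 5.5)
Your argument is correct in substance, and it takes a genuinely different route from the paper.

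\textbf{Comparison with the paper.} The paper handles $d\ge3$ through the sufficient criterion of Theorem~\ref{TheoremSufficientConditions} (irreducible generic fibres, no proportional fibres), verified generically in Theorem~\ref{TheoremGenericPolynomials}. Because of Lemma~\ref{LemmaDegree2Polynomials}, it treats $d=2$ separately, via the normal form $(x-z)^2+(y-t)^2+R(z,t)$ and an explicit description of when that form is Cartesian. It does not spell out an argument for the second assertion. You instead bound the Cartesian locus directly by an incidence-variety dimension count that uses only the easy implication ``Cartesian $\Rightarrow$ $F$ vanishes on some $C_G\times C_H$''. This count is uniform in $d\ge2$. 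It needs neither Lemma~\ref{RadicalIdealLemma} nor the degree bounds of Lemma~\ref{DegreeBoundLemma}. It also produces the closed set $Z_d$ that the second assertion actually requires. Moreover, it gives $\operatorname{codim}\overline{\mathcal C_d}\ge d-1$ for every $d\ge2$ (see below), which is what Claim~1 in the proof of Theorem~\ref{Theorem2} asks for. In exchange, the paper's route gives criteria that can be checked on individual polynomials, and an exact description of the Cartesian quadrics.

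\textbf{Closing your main obstacle.} The inequality you flag closes immediately. Since $h_a(i)-h_a(i-1)=\min(i+1,a)$, your bound is in fact an equality, $r_{a,b}=\sum_{i+j\le d}\min(i+1,a)\min(j+1,b)$. Separating the row $j=0$ and the column $i=0$ gives
\[
r_{a,b}-\Big[\tbinom{a+2}{2}+\tbinom{b+2}{2}-2\Big]=a(d-a)+b(d-b)-1+\sum_{i,j\ge1,\ i+j\le d}\min(i+1,a)\min(j+1,b).
\]
The last sum contains the terms $(i,1)$ for $1\le i\le d-1$, so it is at least $d-1$.
\begin{itemize}
\item If $(a,b)\ne(d,d)$, then $a(d-a)+b(d-b)\ge1$, so the difference is at least $d-1$.
\item If $a=b=d\ge2$, those terms equal $2(i+1)$ and sum to $d^2+d-2$, so the difference is at least $d^2+d-3\ge d-1$.
\end{itemize}
Either way the difference is at least $d-1\ge1$.

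Two small repairs are needed in the set-up.
\begin{itemize}
\item Define $\mathcal J_{a,b}$ only over the locus of irreducible $G,H$ of exact degrees $a,b$; the set $\mathcal J_{a,b}$ is then constructible by Chevalley. Over all of $\mathbb P(\mathcal P_{2;a})$, a constant $G$ gives $S_{G,H}=\emptyset$, and the projection is all of $\mathcal P_{4;d}$.
\item In Step~1, treat the case $H\mid F$ (or $G\mid F$) separately, by replacing $G$ with $x$. Your generic-fibre argument needs $F\not\equiv0$ on $\mathbb C^2\times C_H$.
\end{itemize}

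\textbf{The second assertion needs a degree restriction.} Your phrase ``taking $d\ge\deg G$'' glosses over this. $F$ is chosen generic in $\mathcal P_{4;d}$ before $G$, so you cannot enlarge $d$ to $D=\deg G$. For $D>d$ one has $F\in\overline{\mathcal C_D}$, by the paper's own example $(1+x/n)F\to F$ from Section~2. In fact the assertion is false for arbitrary $G$: with $G=xF$,
\[
F+aG=(1+ax)F\in(1+ax,\,z),
\]
which is Cartesian for every $a\ne0$. For $\deg G\le d$ your argument is right. You do not even need constructibility of $\mathcal C_d$, and hence nothing from Section~2. The set $\{a: F+aG\in\mathcal C_d\}$ is contained in $\{a: F+aG\in Z_d\}$, which is a Zariski closed subset of $\mathbb C$ not containing $0$, hence finite.
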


\begin{proof}
For the case $d \geq 3$, the statement follows directly from Theorems \ref{TheoremSufficientConditions} and \ref{TheoremGenericPolynomials}.

\medskip

For $d=2$, Lemma \ref{LemmaDegree2Polynomials} shows that the above argument does not apply. We therefore present a separate argument for this special case.

\medskip

We first reduce to a normal form. It suffices to consider polynomials of the form
\[
F(x,y,z,t) = (x - z)^2 + (y - t)^2 + R(z,t),
\]
where $R(z,t)$ is a non-zero polynomial of degree at most $2$.

\medskip

Indeed, a general quadratic polynomial can be written as
\[
F(x,y,z,t)
= a_{11}x^2 + a_{12}xy + a_{22}y^2
+ a_{13}xz + a_{14}xt + a_{23}yz + a_{24}yt
+ R(z,t),
\]
where $R(z,t)$ has degree at most $2$.

\medskip

We note that the property of being Cartesian is preserved under polynomial automorphisms of $\mathbb{C}^2_{x,y}$ and $\mathbb{C}^2_{z,t}$.

\medskip

First, applying a linear automorphism of $\mathbb{C}^2_{x,y}$ (using the generic assumption on $F$), we may diagonalize the quadratic part in $(x,y)$ and reduce $F$ to the form
\[
x^2 + y^2 + a_{13}xz + a_{14}xt + a_{23}yz + a_{24}yt + R(z,t).
\]

\medskip

Next, applying a linear automorphism of $\mathbb{C}^2_{z,t}$, namely
\[
Z = -\tfrac{1}{2}(a_{13}z + a_{14}t), \qquad
T = -\tfrac{1}{2}(a_{23}z + a_{24}t),
\]
we reduce $F$ to the form
\[
(x - Z)^2 + (y - T)^2 + R(Z,T),
\]
as claimed.

\medskip

Hence, from now on, we assume that
\[
F(x,y,z,t) = (x - z)^2 + (y - t)^2 + R(z,t),
\]
where $R(z,t)$ has degree at most $2$ and is non-zero.

\medskip

We now prove the following stronger statement.

\medskip

\noindent\textbf{Claim.}
$F$ is Cartesian if and only if there exists a constant $c \in \mathbb{C}$ such that $R(z,t)$ is divisible by $z + it - c$ or $z - it - c$.

\medskip

We prove the claim in several steps.

\medskip

\noindent\textbf{Step 1.}
Let
\[
Q(x,y) = (x - p)^2 + (y - q)^2 + r,
\]
where $p,q,r \in \mathbb{C}$. Then $Q(x,y)$ is reducible if and only if $r = 0$.

\medskip

Indeed (see also Lemma \ref{LemmaDegree2Polynomials}), $Q(x,y)$ is irreducible if and only if its homogenization
\[
Q(x,y,u) = (x - pu)^2 + (y - qu)^2 + r u^2
\]
is non-degenerate. Since non-degeneracy is invariant under linear changes of variables, applying the change
\[
X = x - pu, \quad Y = y - qu,
\]
we obtain
\[
Q(X,Y,u) = X^2 + Y^2 + r u^2,
\]
which is non-degenerate if and only if $r \neq 0$. This proves the claim.

\medskip

\noindent\textbf{Step 2.}
Suppose that $F$ is Cartesian, so that
\[
F = G(x,y)F_1(x,y,z,t) + H(z,t)F_2(x,y,z,t),
\]
where $G$ and $H$ are non-constant irreducible polynomials.

\medskip

We claim that $R(z,t)=0$ whenever $H(z,t)=0$.

\medskip

Indeed, if $R(z,t)$ does not vanish identically on the set $\{H=0\}$, then there are only finitely many points $(z_0,t_0)$ such that
\[
H(z_0,t_0) = R(z_0,t_0) = 0.
\]
By Step 1, there are then only finitely many $(z_0,t_0)$ such that $H(z_0,t_0)=0$ and
\[
Q_{z_0,t_0}(x,y) = (x - z_0)^2 + (y - t_0)^2 + R(z_0,t_0)
\]
is reducible.

\medskip

Since Property (1) holds whenever $R \neq 0$ (by choosing $(z_0,t_0)$ with $R(z_0,t_0)\neq 0$), and Property (2) always holds (see the proof of Theorem \ref{TheoremSufficientConditions}), we may apply Theorem \ref{TheoremSufficientConditions} to conclude that $F$ is non-Cartesian, a contradiction. This proves the claim.

\medskip

\noindent\textbf{Step 3.}
Assume $H(z_0,t_0)=0$. Then $R(z_0,t_0)=0$, and hence
\[
(x - z_0)^2 + (y - t_0)^2
= (x + iy - (z_0 + it_0))(x - iy - (z_0 - it_0))
\]
divides $G(x,y)$.

\medskip

Since $G$ is independent of $(z,t)$ and $H$ has infinitely many zeros, this forces $G$ to have degree $1$. Thus, for each such $(z_0,t_0)$, $G$ must be a scalar multiple of either
\[
x + iy - (z_0 + it_0)
\quad \text{or} \quad
x - iy - (z_0 - it_0).
\]

\medskip

Without loss of generality, assume the first case. Since $G$ is fixed, we conclude that $z_0 + it_0$ must be constant on $\{H=0\}$. Thus there exists $c \in \mathbb{C}$ such that
\[
z_0 + it_0 = c
\quad \text{whenever } H(z_0,t_0)=0.
\]

\medskip

It follows that $H(z,t) = z + it - c$. Since $R(z,t)$ has degree at most $2$, is non-constant, and vanishes on $\{H=0\}$, we conclude that $R$ is divisible by $z + it - c$, i.e.
\[
R(z,t) = (z + it - c)(az + bt + d)
\]
for some constants $a,b,d \in \mathbb{C}$.

\medskip

Since the space of polynomials of degree $2$ in $(z,t)$ has dimension $6$, this condition defines a proper algebraic subset. Hence, for a generic choice of $R(z,t)$, the corresponding $F$ is non-Cartesian.

\medskip

\noindent\textbf{Step 4.}
Finally, we show that this condition is also sufficient.

\medskip

Assume that $R(z,t)$ is divisible by $z + it - c$. Consider the sets
\[
P = \{(x,y) \in \mathbb{C}^2 : x + iy - c = 0\}, \qquad
Q = \{(z,t) \in \mathbb{C}^2 : z + it - c = 0\}.
\]
Then one checks directly that $F=0$ on $P \times Q$. By the characterization of Cartesian polynomials (see \cite{MojarradPhamValculescuZeeuw}), this implies that $F$ is Cartesian.

\medskip

This completes the proof.
\end{proof}

\section{Some generalisations to higher dimensions} 

\begin{theorem} Assume that $n_1\geq n_2\geq \ldots \geq n_k\geq 2$.

1) If $n_1\geq k$ and $d\geq 2$, then a generic polynomial of degree $d$ in $\mathbb{C}^{n_1}\times \ldots \times \mathbb{C}^{n_k}$ is non-Cartesian.

2) If $d$ is large enough (it is sufficient to choose $d\geq \max \{2k-3,7\}$), then a generic polynomial of degree $d$ in $\mathbb{C}^{n_1}\times \ldots \times \mathbb{C}^{n_k}$ is non-Cartesian. 

\label{TheoremHigherDimensions}\end{theorem}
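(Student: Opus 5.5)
The plan is a dimension count. Let $N=n_1+\cdots+n_k$ and let $\mathcal P_{N;d}$ be the space of polynomials of degree at most $d$ on $X$. By Remark 1.4 we may take each $G_j$ irreducible. The first task is a degree bound: if $F\in(G_1,\dots,G_k)$, then we may assume $\deg G_j\le d$. We would obtain this from the higher-block analogue of Lemma~\ref{RadicalIdealLemma}, namely that $(G_1,\dots,G_k)$ is radical, combined with the effective Nullstellensatz exactly as in Lemma~\ref{DegreeBoundLemma}. The cleanest route to degree-one reductions is different, however: the variety $V=\{G_1=\cdots=G_k=0\}=Z_1\times\cdots\times Z_k$ is a product of hypersurfaces, and $F$ is Cartesian iff $F$ vanishes on some such $V$. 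Since containing a point of each $Z_j$ only gets harder as the $Z_j$ get bigger, we are led to count, for each choice of hypersurfaces $Z_j=\{G_j=0\}\subset\mathbb C^{n_j}$ with $\deg G_j=e_j\le d$, the conditions that ``$F|_V\equiv0$'' imposes on $F$. Cartesian polynomials of degree $\le d$ then form the image of the incidence variety
\[
\mathcal J=\{(F,G_1,\dots,G_k): F|_{Z_1\times\cdots\times Z_k}\equiv 0\},
\]
and we will show that $\dim\mathcal J<\dim\mathcal P_{N;d}$.

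For part 1) we compute $\dim \mathcal J$ fibrewise over $(G_1,\dots,G_k)$. The parameter space of the $G_j$ has dimension $\sum_j\bigl(\binom{e_j+n_j}{n_j}-1\bigr)$. The fibre is the kernel of restriction $\mathcal P_{N;d}\to\mathbb C[V]_{\le d}$, and its codimension is the Hilbert function $h_V(d)$ of $V$ in degree $\le d$. Since $V$ is a product, $\mathbb C[V]=\bigotimes_j\mathbb C[Z_j]$. A lower bound for $h_V(d)$ comes from choosing a point in each $Z_j$, or more efficiently a line or linear subspace inside $V$. The crudest useful bound, valid for every choice of the $Z_j$, is
\[
h_V(d)\ \ge\ h_{Z_1}(d)\ \ge\ \binom{d+n_1}{n_1}-\binom{d-e_1+n_1}{n_1},
\]
because $V\supset Z_1\times\{\text{pt}\}\times\cdots$. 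We need $h_V(d)>\sum_j\bigl(\binom{e_j+n_j}{n_j}-1\bigr)$. The worst case should be $e_j=1$ for all $j$ (hyperplanes), where the parameter count is $\sum_j n_j$. In that case $V$ is an affine space of dimension $N-k$ and $h_V(d)=\binom{d+N-k}{N-k}$. For $d\ge2$ this exceeds $\sum_j n_j=N$ as soon as $N-k\ge1$, which already holds since $n_j\ge2$. The inequality $n_1\ge k$ enters when comparing larger $e_j$. There we use $h_V(d)\ge h_{Z_1\times L}(d)$, with $L$ a linear subspace of dimension $\sum_{j\ge2}(n_j-1)$ lying in $Z_2\times\cdots\times Z_k$ after passing to lines in each $Z_j$. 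We would reduce each $Z_j$ for $j\ge2$ to a line it contains or, failing that, to a curve, and bound $h$ from below by that of $Z_1\times(\text{curves})$. The condition $n_1\ge k$ is used to guarantee that $Z_1$ alone contributes enough: roughly, $h_{Z_1}(d)$ grows like $d^{n_1-1}$ times a constant $\ge e_1$, which beats the $\binom{e_1+n_1}{n_1}$ parameters of $G_1$. The extra $k-1$ curve factors supply at least $k$ additional degrees of freedom, absorbing the $\sum_{j\ge2}\binom{e_j+n_j}{n_j}$ parameters when these are small relative to $d$.

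For part 2) we use the same incidence count with $e_j\le d$ arbitrary, but bound $h_V(d)$ via a generic point product. Take general points $p_j\in Z_j$ and lines through them, and note $h_V(d)\ge h_{Z_1}(d)\,h_{Z_2}(1)\cdots$. Then $h_V(d)$ is at least of order $\binom{d+n_1-1}{n_1-1}\cdot 2^{k-1}$ while, after replacing $G_j$ by its restriction to a generic plane, only $O(kd^2)$ parameters matter. Once $d\ge\max\{2k-3,7\}$ this comparison holds, and the thresholds come from checking the small-$n$ binomial inequality $\binom{d+2}{2}\cdot(\cdots)>\sum_j\binom{e_j+2}{2}$ directly.

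The main obstacle is making the lower bound on $h_V(d)$ uniform over all degree patterns $(e_1,\dots,e_k)$ with $e_j\le d$. When some $e_j$ is close to $d$, the parameter count $\binom{e_j+n_j}{n_j}$ is nearly the full dimension of $\mathcal P_{n_j;d}$. In that regime I would first handle it by the alternative argument of Theorem~\ref{TheoremSufficientConditions}: restrict to generic fibres over the other blocks and use irreducibility together with non-proportionality of generic fibres, as in Theorem~\ref{TheoremGenericPolynomials}. This shows that $G_1$ must equal a generic fibre, which is impossible for a fixed $G_1$.
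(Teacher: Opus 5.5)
Your route is genuinely different from the paper's. The paper generalizes Theorems~\ref{TheoremSufficientConditions} and~\ref{TheoremGenericPolynomials} fibrewise: it proves irreducibility of $x_1\mapsto F(x_1,p)$ for points $p$ of $\{G_2=\cdots=G_k=0\}$, using a codimension count for reducible polynomials in $\mathcal P_{n_1;d}$ (this is where $n_1\ge k$ or $d$ large enters), and adds non-proportionality of fibres. Your incidence-variety count can work, but the one inequality it rests on,
\[
h_V(d)>\sum_{j=1}^k\Bigl(\binom{e_j+n_j}{n_j}-1\Bigr)\qquad\text{for every pattern }1\le e_1,\dots,e_k\le d,
\]
is never established, and the bounds you propose do not give it.

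\textbf{Part 1.} For your bounds the hard patterns are $e_j$ near $d$, not $e_j=1$. At $e_1=d$ your bound $h_V(d)\ge h_{Z_1}(d)=\binom{d+n_1}{n_1}-1$ exactly equals the parameter count of $G_1$ alone, so the claim that $h_{Z_1}(d)$ ``beats'' it is false there. The refinement by lines or curves in the $Z_j$ supplying ``$k$ additional degrees of freedom'', and the place where $n_1\ge k$ is used, are only asserted.

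\textbf{Part 2.} The inequality $h_V(d)\ge h_{Z_1}(d)\,h_{Z_2}(1)\cdots$ miscounts degrees, since those products reach degree $d+k-1$. More seriously, a lower bound on $h_V(d)$ that ignores the pattern cannot win. For $n_1=\cdots=n_k=2$, $k\ge3$ (a case only part 2 covers), your bound is $(d+1)2^{k-1}$, linear in $d$, while the parameter count at $e_j=d$ is about $kd^2/2$. So the comparison fails exactly in the large-$d$ regime.

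\textbf{The fallback.} Falling back on Theorem~\ref{TheoremSufficientConditions} needs $F(\cdot,p)$ irreducible for some $p\in Z_2\times\cdots\times Z_k$ whenever $e_1<d$; only for $e_1=d$ does divisibility alone force proportionality. That irreducibility is precisely what the paper's hypotheses are there to guarantee, and it is not automatic: for $d=2$, $n_1=n_2=2$, Lemma~\ref{LemmaDegree2Polynomials} gives a whole curve of $p$ with reducible fibre. Your proposal does not supply it.

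\textbf{The degree bound.} Radicality plus the effective Nullstellensatz bound the cofactors, not $\deg G_j$. The reduction to $\deg G_j\le d$ comes instead from $F(\cdot,p)=G_1H_1(\cdot,p)$ for $p\in Z_2\times\cdots\times Z_k$. If $\deg G_1>d$ this forces $F(\cdot,p)\equiv0$, hence $F\in(G_2,\dots,G_k)$ by radicality, and $G_1$ may be replaced by a linear form.

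The missing idea is that the fibre codimension does not depend on the particular $G_j$ and can be computed exactly. Define $\mathcal J$ by $F\in(G_1,\dots,G_k)$ with $\deg G_j=e_j$. The top-degree forms of the $G_j$ are nonzero and lie in disjoint blocks of variables, so they form a regular sequence. They therefore generate the ideal of top-degree forms of $(G_1,\dots,G_k)$, and
\[
\operatorname{codim}_{\mathcal P_{N;d}}\bigl((G_1,\dots,G_k)\cap\mathcal P_{N;d}\bigr)=\sum_{a_1+\cdots+a_k\le d}\ \prod_{j=1}^k g_j(a_j),\qquad g_j(a)=\binom{a+n_j-1}{n_j-1}-\binom{a-e_j+n_j-1}{n_j-1},
\]
with the second binomial read as $0$ for $a<e_j$. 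Now split the sum:
\begin{enumerate}
\item The tuples with at most one nonzero entry contribute $\sum_j\bigl(\binom{d+n_j}{n_j}-\binom{d-e_j+n_j}{n_j}\bigr)-(k-1)$, which is at least $\sum_j\bigl(\binom{e_j+n_j}{n_j}-1\bigr)-(k-1)$.
\item The tuples with $a_i=a_j=1$ for some $i<j$ (available since $d\ge2$) add $\sum_{i<j}g_i(1)g_j(1)\ge\binom k2$.
\end{enumerate}
This gives the strict inequality for $k\ge3$. For $k=2$, the only case with $g_1(1)g_2(1)=1$ is $n_1=n_2=2$, $e_1=e_2=1<d$, where the first sum already has slack. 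Hence $\dim\mathcal J<\dim\mathcal P_{N;d}$ for every pattern, for all $d\ge2$ and all $n_j\ge2$. This would even remove the hypotheses $n_1\ge k$ and $d\ge\max\{2k-3,7\}$. Without this computation, or some other pattern-by-pattern comparison, your proposal does not prove either part.
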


\begin{proof}

1) In fact, we know that the set of reducible polynomials in $\mathcal{P}_{d;n_1}$ has codimension at least $n_1$, while the codimension of a set $\{G_2(z_2)=\ldots =G_k(z_{k})=0\}$ is precisely $k-1$ (if $G_2,\ldots ,G_{k}$ are non-constant). Therefore, as in Theorem \ref{TheoremGenericPolynomials}, we can show that for a generic polynomial $F(x_1,\ldots ,x_k)$, and for a set of non-constant polynomials $G_2(z_2), \ldots, G_k(z_{k})$ then there are infinitely many  $(x_2,\ldots ,x_k)\in \{G_2(z_2)=\ldots =G_k(z_{k})=0\}$ such that $x_1\mapsto F(x_1,x_1,\ldots ,x_k)$ is irreducible. This corresponds to Property 2 in the statement of Theorem  \ref{TheoremGenericPolynomials}. 

The generalisation of Property 1 in the statement of  Theorem  \ref{TheoremGenericPolynomials} can be more easily proven.

Concerning the generalisation of Property 3 in the statement of Theorem \ref{TheoremGenericPolynomials}: adapting the proof of Theorem \ref{TheoremGenericPolynomials}, we need to consider $2$ cases. If $n_2+n_3+\ldots +n_k\geq n_1$, then by looking at the homogeneous part of degree $d-1$ in $x_1$, and reduce to showing that a linear map $A: \mathbb{C}^{n_2}\times \ldots \mathbb{C}^{n_k}\rightarrow \mathbb{C}^{n_1} $ has rank $n_1$. On the other hand, if $n_2+n_3+\ldots +n_k=n< n_1$, then we write $x_1=(y_1,z_1)$ where $y_1\in \mathbb{C}^n $ and $z_1\in \mathbb{C}^{n_1-n}$, and look at the homogeneous part of degree $d-1$ in $y_1$ and use the same argument like in the first case.

After having this generalisation of Theorem  \ref{TheoremGenericPolynomials}, the proof of Theorem \ref{TheoremSufficientConditions} can be also generalized to yield the conclusion.

2) Even if $n_1<k$, if $d$ is large enough then we can again show that the set of reducible polynomials in $\mathcal{P}_{d;n_1}$ has codimension at least $k$. Then arguments as in 1) yield the conclusions. For the sufficiency of $d\geq \max \{2k-3,7\}$, please see Step 1 in the proof of Theorem \ref{Theorem2}.  

\end{proof}

\begin{definition}
A generic $\mathbb{C}^k\subset \mathbb{C}^n$ is defined to be a generic complex subspace of $\mathbb{C}^n$ of dimension $k$. In other words, it is the image of a generic linear map $A :\mathbb{C}^k\rightarrow \mathbb{C}^n$. Another way to say is that it is a generic point of the Grassmanian manifold $Gr_k(\mathbb{C}^n)$.  
\end{definition}

Here is an extension for the estimate in higher dimensions.  

\begin{theorem} 
Let $d\geq 2$ (for $k\geq 3$, we require  $d\geq \max \{2k-3,7\}$). 
Then there is a constant $C_d>0$ depending only on $d$ such that the following holds. 
Let $F$ be a generic polynomial on $\mathbb{C}^{N_1}\times \ldots \times \mathbb{C}^{N_k}$ of degree $d$. 

For $j=1,\ldots ,k$, let $P_j\subset \mathbb{C}^{N_j}$ be a codimension $2$ subvariety. 
Let $W$ be the union of components of $P_1\times \ldots \times P_k$ which belong to $\{F=0\}$. 
Then 
$$\deg (W)\leq C_d \deg (P_1)^{2/3}\deg (P_2)^{2/3} \deg (P_3)\ldots \deg (P_k).$$

\label{Theorem2}
\end{theorem}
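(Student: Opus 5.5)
The plan is to reduce to the two-block incidence bound of \cite{MojarradPhamValculescuZeeuw} for non-Cartesian polynomials on $\mathbb{C}^2\times\mathbb{C}^2$, now in its degree/variety form, by fibering over the last $k-2$ factors and slicing the first two. For $k=2$ the statement is the variety version of the $|P|^{2/3}|Q|^{2/3}$ bound. Take generic planes $\Pi_1\cong\mathbb{C}^2\subset\mathbb{C}^{N_1}$ and $\Pi_2\cong\mathbb{C}^2\subset\mathbb{C}^{N_2}$ in the sense of the preceding Definition. By Bezout and genericity, $P_j\cap\Pi_j$ is a finite set of exactly $\deg(P_j)$ points. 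The restriction $F|_{\Pi_1\times\Pi_2}$ is again a generic polynomial of degree $d$ on $\mathbb{C}^2\times\mathbb{C}^2$, so by Proposition~\ref{Proposition1} it is non-Cartesian. The degree of $W$ is then controlled by counting the points of $W\cap(\Pi_1\times\Pi_2)$. Each component of $W$ is a product of components of the $P_j$, so its degree is the product of their degrees, and this is detected by the point count after slicing. The incidence bound then gives $\deg W\lesssim_d \deg(P_1)^{2/3}\deg(P_2)^{2/3}+\deg P_1+\deg P_2$. The linear terms are dominated because we may assume the nondegenerate regime, or else absorb them as in the cited paper.

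For $k\ge 3$, I would proceed in the following order.

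\textbf{Step 1.} Show that for $d\ge\max\{2k-3,7\}$ the reducible polynomials in $\mathcal{P}_{2;d}$, and also those failing Properties 2 and 3 of Theorem~\ref{TheoremGenericPolynomials}, have codimension at least $k$. For reducibility this is the estimate $\binom{d_1+2}{2}+\binom{d_2+2}{2}-1\le\binom{d+2}{2}-k$, which reduces to $d_1d_2\ge k$. This holds when $d\ge 2k-3$ as long as $\min(d_1,d_2)\ge 2$. The case $d_1=1$ gives codimension $d$, which is at least $k$. This step also supplies the sufficiency claim in Theorem~\ref{TheoremHigherDimensions}(2).

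\textbf{Step 2.} Run the equidimensional evaluation-map argument of Theorem~\ref{TheoremGenericPolynomials}, now over the parameter space $\Pi_3\times\cdots\times\Pi_k$ of the remaining factors, where each $\Pi_j$ is a generic plane. The conclusion is that for generic $F$ the bad fibre set has dimension at most $2(k-2)-k$. In particular, for all but a lower-dimensional set of points $(x_3,\dots,x_k)$, the two-block polynomial $F(\cdot,\cdot,x_3,\dots,x_k)$ satisfies the hypotheses of Theorem~\ref{TheoremSufficientConditions}, and is therefore non-Cartesian with uniform constant $C_d$.

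\textbf{Step 3.} Use the fact that for each point $p=(p_3,\dots,p_k)$ of $(P_3\cap\Pi_3)\times\cdots\times(P_k\cap\Pi_k)$ — there are $\prod_{j\ge3}\deg P_j$ such points — the fibre count is at most $C_d\deg(P_1)^{2/3}\deg(P_2)^{2/3}$. Summing over these points gives the bound.

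The main obstacle is making the fibre selection in Step 2 genuinely avoid the bad set. The points $p$ lie on the specific subvariety $\prod_j(P_j\cap\Pi_j)$, which is not generic, so a codimension count on the bad locus in parameter space does not by itself exclude it. I would first handle this by choosing the slicing planes $\Pi_j$ generically after $F$ is fixed. The bad locus $B\subset\prod_{j\ge3}\mathbb{C}^{N_j}$ has codimension at least $k$ by Step 1, while $\prod_{j\ge3}P_j$ has codimension $2(k-2)$. Genericity of $F$ relative to the $P_j$, obtained by translating $F$ within its Zariski-open class, should then force $B\cap\prod_{j\ge 3} P_j$ to have codimension large enough that the generic transversal slices miss it entirely. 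If this fails, I would fall back on using the uniform bound only at good points and bounding the contribution of the bad points by the trivial estimate $d\cdot\deg P_1$, which is dominated by the main term.
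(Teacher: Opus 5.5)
Your architecture (slice by generic $2$-planes, fibre over the last $k-2$ factors, use an equidimensional evaluation map, sum the two-block bound over fibres) is the paper's. But the step you flag as the main obstacle is a genuine gap, and neither of your remedies works.

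\emph{Why the remedies fail.} Genericity of $F$ relative to the $P_j$ is not available. $F$ is fixed first, and the bound must hold for every choice of $P_j$; otherwise the case $N_j=2$ would be vacuous, since a generic $F$ misses the finitely many points of $P_1\times\cdots\times P_k$. When $N_3=\cdots=N_k=2$ there is no slicing freedom at all, so one may simply put any point $b$ of your bad locus into $P_3\times\cdots\times P_k$. Nothing in your argument prevents the fibre over such a $b$ from being Cartesian.

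Your fallback bound $d\cdot\deg P_1$ for a bad fibre is false. If $F(\cdot,\cdot,b)\in(G(x_1),H(x_2))$, it vanishes on all of $(P_1\cap Z(G))\times(P_2\cap Z(H))$. So one bad fibre can carry $|P_1||P_2|$ incidences, which is not dominated by $|P_1|^{2/3}|P_2|^{2/3}\prod_{j\ge 3}|P_j|$ (take $|P_j|=1$ for $j\ge 3$). Even for a single $x_1$, the curve $\{F(x_1,\cdot,b)=0\}$ can contain arbitrarily many points of $P_2$.

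\emph{Your codimension target is also off.} Codimension $\ge k$ inside a parameter space of dimension $2(k-2)$ leaves a bad set of dimension up to $k-4$, not a finite one. The paper instead shows $\operatorname{codim}\mathcal{C}_d\ge d-1$ in $\mathcal{P}_{2,2;d}$ and uses $d-1\ge 2(k-2)$ (exactly the role of $d\ge 2k-3$) to get a \emph{finite} exceptional set $A$. It then sums over $P_3\times\cdots\times P_k\setminus A$, though it too never bounds the contribution of the points of $A$.

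\emph{The missing idea is to make the exceptional set empty}, so that the $P_j$ become irrelevant. Only fibres that are genuinely Cartesian matter: a non-Cartesian fibre obeys the two-block bound with a constant depending only on $d$, whether or not it satisfies the hypotheses of Theorem~\ref{TheoremSufficientConditions}. For each fixed $b$, the map $F\mapsto F(\cdot,\cdot,b)$ is linear and surjective onto $\mathcal{P}_{2,2;d}$. Hence if $\operatorname{codim}\mathcal{C}_d>2(k-2)$ \emph{strictly}, the incidence set $\{(F,b): F(\cdot,\cdot,b)\in\mathcal{C}_d\}$ has dimension $<\dim\mathcal{P}_{2,\ldots,2;d}$. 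Its projection is then a proper constructible set, and a generic $F$ has no Cartesian fibre at all.

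A direct count supplies this codimension:
\begin{enumerate}
\item A Cartesian $F$ of degree $\le d$ lies in $(G,H)$ with $a=\deg G$ and $b=\deg H$ both $\le d$.
\item For a graded order, $G$ and $H$ have coprime leading monomials, so $\{G,H\}$ is a Gr\"obner basis. The degree-$\le d$ part of $(G,H)$ therefore has codimension equal to the number of monomials of degree $\le d$ divisible by neither leading monomial; this is $\binom{d+2}{2}$ when $a=b=1$.
\item Subtracting the $\binom{a+2}{2}+\binom{b+2}{2}-2$ parameters of $(G,H)$ leaves codimension at least $\binom{d+2}{2}-4$, which exceeds $2(k-2)$ in your range.
\end{enumerate}

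\emph{The linear terms cannot be absorbed.} With $k=2$, $N_1=N_2=2$, $P_1=\{p\}$ and $P_2$ any $n$ points on the curve $\{F(p,\cdot)=0\}$, one gets $\deg W=n\gg n^{2/3}$. What your argument actually proves is the bound with $\deg(P_1)^{2/3}\deg(P_2)^{2/3}$ replaced by $\deg(P_1)^{2/3}\deg(P_2)^{2/3}+\deg P_1+\deg P_2$. The same holds for the paper's argument, which drops these terms when quoting the two-block result.
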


\begin{proof}

The proof proceeds in several steps. 
We begin by explaining the overall strategy in more detail. 
We will first restrict $F$ to a generic $\mathbb{C}^2\times \ldots \times \mathbb{C}^2$ 
inside $\mathbb{C}^{N_1}\times \ldots \times \mathbb{C}^{N_k}$, and then further restrict 
to the first $2$ factors $\mathbb{C}^2\times \mathbb{C}^2$. 
The goal is to reduce the problem to a setting where known results on Cartesian polynomials apply. 
We will show that the restriction is non-Cartesian, and hence can apply the results from the Cartesian polynomial paper. 

For the purpose of clearly describing the genericity condition in the theorem, 
we will in fact proceed in the reverse order, first analyzing the two-variable situation, 
and then lifting the result back to the higher-dimensional setting. 

\medskip

Let $\mathcal{C}_d\subset \mathcal{P}_{2,2;d}$ be the closure of the set of Cartesian polynomials. 
By Proposition \ref{Proposition1}, $\mathcal{C}_d$ is of codimension at least $1$. 
However, for the purpose of this proof, we will need the following stronger statement, 
which ensures that the bad set is sufficiently small.

\medskip

\noindent
\textbf{Claim 1:} For $d\geq 7$, the codimension of $\mathcal{C}_d$ is at least $d-1$. 

\medskip

\noindent
\emph{Proof of Claim 1:} 
To prove this claim, it suffices to check that the set of polynomials in 
$\mathcal{P}_{2,2;d}$ which do not satisfy either Property $1$, $2$, or $3$ 
in Theorem \ref{TheoremSufficientConditions} has codimension at least $d-1$. 
We consider each property in turn.

\medskip

\noindent
\emph{Property 1:} 
The set of polynomials which belong to $\mathbb{C}[x,y,z,t]$ has dimension $\binom{4+d}{4}$, 
while the set of polynomials which belong to $\mathbb{C}[z,t]$ has dimension $\binom{2+d}{2}$. 
Hence the codimension is clearly at least $d-1$. 
The claim concerning irreducibility in this case follows directly from the analysis of Property 2.

\medskip

\noindent
\emph{Property 2:} 
By the proof of part 2 of Theorem \ref{TheoremGenericPolynomials}, 
the set of polynomials which do not satisfy Property 2 
(i.e. those $F(x,y,z,t)$ for which there are infinitely many $(z_0,t_0)$ 
so that the function $(x,y)\mapsto F(x,y,z_0,t_0)$ is reducible)  
has codimension at least 
\[
\min_{d_1,d_2\geq 2,\, d_1+d_2=d} d_1d_2\geq d-1.
\]

\medskip

\noindent
\emph{Property 3:} 
Here we need to show that the set of polynomials $F(x,y,z,t)$ 
for which there exist $z_1,t_1,z_2,t_2,\lambda \in \mathbb{C}$ 
such that $F(x,y,z_1,t_1)-\lambda F(x,y,z_2,t_2)$ is identically zero 
(as a function of $x,y$) has codimension at least $d-1$. 
To this end, we consider two cases.

\medskip

\noindent
\emph{Case 1: $\lambda \neq 1$.} 
This implies that the homogeneous part of degree $d$ in $x,y$ 
of a bad polynomial $F$ must be identically $0$. 
The set of such bad polynomials therefore satisfies at least $d+1$ independent linear conditions, 
and hence has codimension at least $d+1$.

\medskip

\noindent
\emph{Case 2: $\lambda =1$.} 
In this case, if we write 
$$F(x,y,z,t)=\sum_{i+j\leq d}F_{i,j}(z,t)x^iy^j,$$
where $F_{i,j}(z,t)$ is a polynomial of degree $\leq d-(i+j)$, 
then we have 
$$F_{i,j}(z_1,t_1)=F_{i,j}(z_2,t_2)$$
for all $(i,j)$. 

We note that there are $d(d-1)/2$ such pairs $(i,j)$ with $i+j\leq d-1$, 
for which a generic $F_{i,j}(z,t)$ has degree at least $1$. 
Choose $m$ to be the largest integer such that $m\leq d(d-1)/6$, 
and partition the set of indices $(i,j)$ into at least $m$ disjoint triples 
$(i_1,j_1),(i_2,j_2),(i_3,j_3)$. 

Using that the set of triples $(F_1,F_2,F_3)\in \mathcal{P}_{2;d_1}\times \mathcal{P}_{2;d_2}\times \mathcal{P}_{2;d_3}$ 
(with $d_1,d_2,d_3\geq 1$) having a common pair  $(z_1,t_1)$ and $(z_2,t_2)$ for which $F_j(z_1,t_1)=F_j(z_2,t_2)$ for all $j=1,2,3$ has codimension at least $1$, which can be seen by assuming first $(z_1,t_1)=(0,0)$ and reducing to that  $\widehat{F_j}(z,t)=F_j(z,t)-F_j(0,0)$ having a common root $(z_2,t_2)\not= (0,0)$, 
we conclude that the set of bad polynomials $F$ in Case 2 has codimension at least $m$, 
which is at least $d-1$ under our assumption.

\medskip

(Q.E.D.)

\medskip

We note that if $k=2$ then Step 1 below is not needed. 

\medskip

\noindent
\textbf{Step 1:} 
Let $k\geq 3$ and $d\geq \max \{2k-3,7\}$. 
Let $G$ be a generic polynomial on $(\mathbb{C}^2)^k$. 
Then there is a finite set $A\subset (\mathbb{C}^2)^{k-2}$ 
(the last $k-2$ factors), where $A$ depends on $G$, 
such that if $(z_3^{(0)},\ldots , z_k^{(0)})\notin A$ 
then the function $(z_1,z_2)\mapsto F(z_1,z_2,z_3^{(0)},\ldots , z_k^{(0)})$ 
is non-Cartesian. 

\medskip

\noindent
\emph{Proof of Step 1:} 
We consider the evaluation map 
\[
\psi : \mathcal{P}_{2,\ldots ,2;d}\times (\mathbb{C}^2)^{k-2}\rightarrow \mathcal{P}_{2,2;d},
\]
given by $(F,z_3,\ldots ,z_k)\mapsto$ the function 
$(z_1,z_2)\mapsto F(z_1,z_2,z_3,\ldots ,z_k)$. 

As in the proof of part 2 of Theorem \ref{TheoremGenericPolynomials}, 
this map $\psi$ is equi-dimensional, 
meaning that the preimage of any point in $\mathcal{P}_{2,2;d}$ is non-empty 
and has constant dimension. 

By Claim 1, the bad set $\mathcal{C}_d\subset \mathcal{P}_{2,2;d}$ 
has codimension at least $d-1\geq 2(k-2)$. 
Hence its preimage 
\[
\mathcal{C}_{d;k}=\psi^{-1}(\mathcal{C}_d)
\]
also has codimension at least $d-1\geq 2(k-2)$. 
Note that $2(k-2)$ is precisely the dimension of $(\mathbb{C}^2)^{k-2}$. 

Therefore, for a generic $F\in \mathcal{P}_{2,\ldots ,2;d}$, 
the set 
\[
(\{F\}\times (\mathbb{C}^2)^{k-2})\cap \mathcal{C}_{d;k}
\]
is at most finite. 

\medskip

Since this intersection is defined by algebraic conditions whose degrees are bounded in terms of $d$, 
standard arguments imply that its cardinality is bounded by a constant depending only on $d$. 

\medskip

This concludes the proof of Step 1. 

(Q.E.D.)

\medskip

\noindent
\textbf{Step 2:} 
Let $d\geq 2$ if $k=2$, and let $d\geq \max \{2k-3,7\}$ if $k\geq 3$. 
There is a constant $C_d>0$ depending only on $d$ such that the following holds. 
Let $F$ be a generic polynomial on $\mathbb{C}^{N_j=2}\times \ldots \times \mathbb{C}^{N_k=2}$ of degree $d$. 

For $j=1,\ldots ,k$, let $P_j\subset \mathbb{C}^{2}$ be a finite set. 
Let $W$ be the intersection of $P_1\times \ldots \times P_k$ and $\{F=0\}$. 

Then 
$$\sharp W\leq C_d  (\sharp P_1)^{2/3} (\sharp P_2)^{2/3}  (\sharp P_3)\ldots  (\sharp P_k).$$

\medskip

\noindent
\emph{Proof of Step 2:} 
If $k=2$, then this is simply the result from the Cartesian polynomials paper. 

Now we consider $k\geq 3$. Let $F$ be a generic polynomial which satisfies the conclusion of Step 1. 
Let $A$ be the associated exceptional finite set. 

For each $(z_3^{(0)},\ldots , z_k^{(0)})\in P_3\times \ldots \times P_k\setminus A$, 
the polynomial 
\[
(z_1,z_2)\mapsto F_{z_3^{(0)},\ldots , z_k^{(0)}}(z_1,z_2)
=F(z_1,z_2,z_3^{(0)},\ldots , z_k^{(0)})
\]
is non-Cartesian. Therefore, by the Cartesian polynomials result,  
\[
\sharp (\{F_{z_3^{(0)},\ldots , z_k^{(0)}}=0\})\cap (P_1\times P_2)
\leq C_d  (\sharp P_1)^{2/3} (\sharp P_2)^{2/3}.
\]

Taking the sum over all $(z_3^{(0)},\ldots , z_k^{(0)})$ 
we obtain the conclusion of Step 2. 

\medskip

(Q.E.D.)

\medskip

\noindent
\textbf{Step 3:} 

Let $d\geq 2$ if $k=2$, and let $d\geq \max \{2k-3,7\}$ if $k\geq 3$. 
For a generic polynomial $F$ on $\mathbb{C}^{N_1}\times \ldots \times \mathbb{C}^{N_k}$ of degree $d$, 
the restriction of $F$ to a generic $\mathbb{C}^{2}\times \ldots \times \mathbb{C}^{2}$ 
satisfies the conclusions of Step 1. 

\medskip

\noindent
\emph{Proof of Step 3:} 
As in the above arguments, the restriction of $F$ to such a subspace can be represented 
by the following algebraic map $\phi$. 

Let 
\[
\phi : \mathcal{P}_{N_1,\ldots ,N_k;d}\times 
(Gr_2(\mathbb{C}^{N_1})\times \ldots \times Gr_2(\mathbb{C}^{N_k}))
\rightarrow \mathcal{P}_{2,\ldots ,2;d}
\]
be the evaluation map.

Let $\mathcal{C}_{d;k}$ be the bad set from Step 1, 
and let $\mathcal{B}_{d;k}$ be its projection. 
Then the restriction is good provided the image avoids $\mathcal{B}_{d;k}$. 

Since $\mathcal{B}_{d;k}$ is a proper subvariety, 
it suffices to show that $\phi$ is equi-dimensional. 

We now verify this. 
For each fixed $(A_1,\ldots ,A_k)$, the induced map $\phi_{A_1,\ldots ,A_k}$ is linear and surjective. 
Hence all fibers have the same dimension $=$ $\dim ( \mathcal{P}_{N_1,\ldots ,N_k;d})-\dim \mathcal{P}_{2,\ldots ,2;d}$, which is independent of $(A_1,\ldots ,A_k)$,  and therefore $\phi$ is equi-dimensional.

(Q.E.D.)

\medskip

Now we complete the proof of Theorem \ref{Theorem2}. 

We let $F$ be a generic polynomial on $\mathbb{C}^{N_1}\times \ldots \times \mathbb{C}^{N_k}$ 
so that the conclusions of Step 3 hold. 

For $j=1,\ldots ,k$, let $P_j\subset \mathbb{C}^{N_j}$ be codimension $2$ subvarieties, 
and let $W$ be as defined above. 

For a generic $\mathbb{C}^{2}\times \ldots \times \mathbb{C}^{2}$, 
let $G$ be the restriction of $F$. 
Then $G$ satisfies the conclusions of Step 1. 

Moreover, by B\'ezout's theorem, 
\[
(P_1\times \ldots P_k)\cap (\mathbb{C}^2)^k
=\widehat{P}_1\times \ldots \times \widehat{P}_k,
\]
where each $\widehat{P}_j\subset \mathbb{C}^2$ is a finite set 
whose cardinality equals $\deg (P_j)$. 

Similarly, $W\cap (\mathbb{C}^2)^k$ is finite 
and has cardinality equal to $\deg (W)$. 
Moreover,
\[
W\cap (\mathbb{C}^2)^k
=\{G=0\}\cap (\widehat{P}_1\times \ldots \times \widehat{P}_k).
\]

Therefore, applying Step 2 to $G$ and the finite sets $\widehat{P}_j$, 
we obtain the desired bound. This completes the proof.

\end{proof}

In the next section we present  some examples which show that the above theorems are sharp. Here is a remark concerning the proof of Theorem \ref{Theorem2}. We note that Step 1 in the proof of Theorem \ref{Theorem2} does not hold if the polynomial is not generic.  We construct now a counterexample on $\mathbb{C}^2\times \mathbb{C}^2 \times \mathbb{C}^2$. 

\medskip

Let
\[
F(x_1,x_2,y_1,y_2,z_1,z_2)
=
x_1y_1 + y_2z_1 + z_2x_2 + 1.
\]
Fix $z_1 = 0$, and let $z_2 \neq 0$. Then
\[
F(x,y,z') = x_1y_1 + z_2 x_2 + 1.
\]
This can be written as
\[
F(x,y,z')
=
(z_2 x_2 + 1)\cdot 1
+
y_1 \cdot x_1,
\]
which is of the form
\[
G(x)F_1(x,y) + H(y)F_2(x,y),
\]
and hence is Cartesian in the $(x,y)$ variables.

\medskip

On the other hand, one can verify (by using Theorem \ref{TheoremSufficientConditions}) that the original polynomial $F$ is non-Cartesian on 
\[
\mathbb{C}^2 \times \mathbb{C}^2 \times \mathbb{C}^2.
\]

\section{Examples for sharpness of intersection estimates}

Here we present many examples which illustrate the sharpness of the estimates in the main theorems of this paper, as well as the subtlety when extending the known estimates from $\mathbb{C}^2\times \mathbb{C}^2$ to higher dimensions. For simplicity, we present here examples on $ \mathbb{C}^{n_1}\times \mathbb{C}^{n_2}$ and $\mathbb{C}^{n_1}\times \mathbb{C}^{n_2}\times \mathbb{C}^{n_3}$ only, but the arguments can be adapted to other cases.  




\subsection{Examples for sharpness of estimates in $\mathbb{C}^2\times \mathbb{C}^2$} In the work of Mojarrad, Pham, Valculescu and Zeeuw, some examples illustrating the sharpness of their theorem \cite{MojarradPhamValculescuZeeuw} were given. Here we present an example, arising in the proof of Proposition \ref{Proposition1}, which will be utilised later. 

We construct explicit finite sets $P\subset\mathbb C^2_{x,y}$ and 
$Q\subset\mathbb C^2_{a,b}$ with $|P|\sim|Q|\sim n$ such that
\[
|Z(f)\cap(P\times Q)|\sim n^{4/3}, 
\]

where $f(x,y,a,b)=(x-a)^2+(y-b)^2+H(a,b)$ and $H(a,b)=1-a^2-b^2$. From the proof of Proposition \ref{Proposition1}, it follows that $f$ is non-Cartesian. 

We have
\begin{align*}
f(x,y,a,b)
&=(x-a)^2+(y-b)^2+1-a^2-b^2\\
&=x^2-2ax+a^2+y^2-2by+b^2+1-a^2-b^2\\
&=x^2+y^2+1-2ax-2by.
\end{align*}

Therefore $f(x,y,a,b)=0$ is equivalent to
\begin{equation}\label{eq:main_linear}
2ax+2by = x^2+y^2+1,
\end{equation}
or equivalently,
\begin{equation}\label{eq:linear_relation}
ax+by=\frac{x^2+y^2+1}{2}.
\end{equation}

Note that for each fixed $(x,y)\in\mathbb C^2$ with $y\neq 0$.  
Solving \eqref{eq:linear_relation} for $b$, we obtain
\begin{align*}
by &= \frac{x^2+y^2+1}{2} - ax,\\
b &= -\frac{x}{y}a + \frac{x^2+y^2+1}{2y}.
\end{align*}

Thus for each $(x,y)$ with $y\neq 0$, the equation
$f(x,y,a,b)=0$ defines a line in the $(a,b)$-plane:
\begin{equation}\label{eq:line_equation}
b = -\frac{x}{y}\,a + \frac{x^2+y^2+1}{2y}.
\end{equation}

Now given an integer parameter $s\ge1$ and define
\[
A=\{1,2,\dots,s\},
\qquad
B=\{1,2,\dots,2s^2\}.
\]
Let
\[
Q:=A\times B \subset \mathbb C^2_{a,b}.
\]
Then
\[
|Q|=|A||B|=2s^3.
\]

Define a family of lines
\[
\mathcal L
=
\{\, b=ma+t:\ m\in\{1,\dots,s\},\ t\in\{1,\dots,s^2\}\,\}.
\]
Hence
\[
|\mathcal L|=s\cdot s^2 = s^3.
\]

For each line $b=ma+t$ and each $a\in A$,
\[
b = ma+t \in \{1,\dots,2s^2\},
\]
so every line of $\mathcal L$ contains exactly $s$ points of $Q$.

Therefore the total number of point--line incidences is
\begin{equation}\label{eq:incidence_count}
I(Q,\mathcal L)
=
|\mathcal L|\cdot s
=
s^3\cdot s
=
s^4.
\end{equation}

Therefore for fixed $m\in\{1,\dots,s\}$ and $t\in\{1,\dots,s^2\}$.  
We construct $(x,y)$ such that the line in \eqref{eq:line_equation} becomes
\[
b=ma+t.
\]

First, the slope condition says:
\begin{equation}\label{eq:slope_condition}
-\frac{x}{y}=m
\quad\Longleftrightarrow\quad
x=-my.
\end{equation}

Substitute $x=-my$ into the intercept term:
\begin{align*}
\frac{x^2+y^2+1}{2y}
&=
\frac{m^2y^2+y^2+1}{2y}\\
&=
\frac{(m^2+1)y^2+1}{2y}.
\end{align*}

To obtain intercept $t$, we require
\[
\frac{(m^2+1)y^2+1}{2y}=t,
\]
which is equivalent to the quadratic equation
\begin{equation}\label{eq:quadratic_y}
(m^2+1)y^2 - 2ty + 1 = 0.
\end{equation}

Since we are working over $\mathbb C$, equation \eqref{eq:quadratic_y} always has a solution $y\in\mathbb C$.

For any such solution $y$, define
\[
x:=-my.
\]

Then the corresponding line in \eqref{eq:line_equation} is precisely
\[
b=ma+t.
\]

\medskip

Define
\[
P:=\{(x_{m,t},y_{m,t}) : 1\le m\le s,\ 1\le t\le s^2\}.
\]
Then
\[
|P|=s^3,
\]
and the incidences between $P$ and $Q$ via $f=0$ coincide exactly with the point--line incidences counted in \eqref{eq:incidence_count}.

Hence
\[
|Z(f)\cap(P\times Q)| = s^4.
\]

Since $|P|\sim |Q|\sim s^3$, writing $n=s^3$ gives
\[
|Z(f)\cap(P\times Q)| \sim n^{4/3}.
\]

\begin{remark}
The special case 
\[
H(a,b) \equiv -1
\]
deserves separate mention. In this case the polynomial becomes
\[
f(x,y,a,b) = (x-a)^2 + (y-b)^2 - 1,
\]
and the equation $f(x,y,a,b)=0$ is equivalent to
\[
(x-a)^2 + (y-b)^2 = 1,
\]
that is, $(x,y)$ lies on the unit circle centered at $(a,b)$.

Thus, for finite sets $P,Q \subset \mathbb C^2$ (or $\mathbb R^2$), the quantity
\[
|Z(f) \cap (P \times Q)|
\]
counts the number of pairs $(p,q) \in P \times Q$ such that the Euclidean distance between $p$ and $q$ equals $1$. In particular, when $P=Q \subset \mathbb R^2$, this reduces to the classical Erd\H{o}s unit distance problem, which asks for the maximum number of unit distances determined by $n$ points in the plane which is still an open problem.

Therefore, the constant case $H \equiv -1$ lies at the heart of a central and notoriously difficult problem in combinatorial geometry. Our analysis of the general family
\[
f(x,y,a,b) = (x-a)^2 + (y-b)^2 + H(a,b)
\]
may be viewed as a natural deformation of the unit distance setting.
\end{remark}

\subsection{Failure of naive intersection estimates  for non-Cartesian polynomials in higher dimensions}

The polynomial $F(x,y,z,a,b,c)=z-ax-by-c$  satisfies Properties (1), (2), and (3) in Theorem \ref{TheoremSufficientConditions}, and hence is non-Cartesian by that theorem. 

However in six variables, being non-Cartesian is not sufficient to get a nontrivial upper bound for the naive intersecton estimates. Consider the above polynomial as defined on $\mathbb{C}^3\times \mathbb{C}^3$. We show that there exist finite sets $P,Q\subset \mathbb C^3$ with
$|P|=|Q|=n$ such that
\[
|Z(F)\cap (P\times Q)| \sim n^2.
\]

\medskip
\noindent

First define
\[
P := \{\, p_i=(i,0,0) : i=1,2,\dots,n \,\}
\subset \mathbb C^3_{x,y,z},
\]
and
\[
Q := \{\, q_j=(0,j,0) : j=1,2,\dots,n \,\}
\subset \mathbb C^3_{a,b,c}.
\]

Observe that all points in $P$ are distinct, since their first
coordinates $i$ are distinct. Similarly, all points in $Q$
are distinct, since their second coordinates $j$ are distinct.
Therefore
\[
|P|=n,
\qquad
|Q|=n.
\]

\medskip
\noindent

Let $p_i=(i,0,0)\in P$ and $q_j=(0,j,0)\in Q$.
Substituting these coordinates into $F$, we obtain
\begin{align*}
F(p_i,q_j)
&= F(i,0,0,\,0,j,0) \\
&= z - ax - by - c \\
&= 0 - (0)\cdot i - j\cdot 0 - 0 \\
&= 0.
\end{align*}

Thus for every pair $(i,j)$ with
$1\le i\le n$ and $1\le j\le n$,
we have
\[
F(p_i,q_j)=0.
\]
In other words,
\[
P\times Q \subset Z(F).
\]

\medskip
\noindent

Since $Z(F)\cap (P\times Q)$ is a subset of $P\times Q$
and we have already shown that $P\times Q\subset Z(F)$,
it follows that
\[
Z(F)\cap (P\times Q) = P\times Q.
\]

Therefore,
\[
|Z(F)\cap (P\times Q)|
=
|P\times Q|
=
|P|\,|Q|
=
n\cdot n
=
n^2.
\]

\begin{remark}[Higher-dimensional phenomena]
The intersection behavior in higher dimensions is substantially more subtle 
than in the $\mathbb C^2 \times \mathbb C^2$ setting.

In the $\mathbb C^2 \times \mathbb C^2$ case, once $f$ is non-Cartesian, the zero set 
$Z(f)$ cannot contain a large product subvariety of the form 
$V \times W$ with $\dim V = \dim W = 1$. 
As a consequence, the problem can be reduced to a point--curve 
(or point--line) incidence configuration, and one obtains the sharp 
$n^{4/3}$ upper bound.

In contrast, in the $\mathbb C^3 \times \mathbb C^3$ setting, even if $f$ is 
non-Cartesian, the hypersurface $Z(f)$ may still contain 
large positive-dimensional subvarieties which are not of pure product type. 
These subvarieties can support dense incidences and may lead to 
quadratic behavior for suitably chosen sets $P$ and $Q$, as illustrated 
by the preceding examples.

Therefore, in higher dimensions, the non-Cartesian condition alone is 
insufficient to guarantee a nontrivial upper bound such as $O(n^{3/2})$. 
To obtain meaningful incidence estimates in $\mathbb C^3 \times \mathbb C^3$, 
one must impose additional structural assumptions on $f$ 
that rule out the presence of large subvarieties inside $Z(f)$.
\end{remark}













\subsection{Examples for sharpness of estimates in higher dimensions}

\subsubsection{Sharpness estimates via lifting}
We start with a simple but useful observation showing that
non-Cartesianity is stable under a trivial extension of variables.

\begin{lemma}[Lifting preserves non-Cartesianity]
\label{lem:lifting_non_cartesian}
Let
\[
F(x,y)\in \mathbb C[x_1,x_2,y_1,y_2]
\]
be a polynomial that is non-Cartesian in the two-block sense. Define
\[
f(x,y,z):=F(x,y),
\qquad
(x,y,z)\in (\mathbb C^2)^3.
\]
Then \(f\) is non-Cartesian in the three-block sense.
\end{lemma}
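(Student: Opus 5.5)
The plan is to argue by contradiction, specializing the third block of variables at a zero of its generator. Suppose $f(x,y,z)=F(x,y)$ were Cartesian in the three-block sense on $(\mathbb C^2)^3$. Then there are non-constant polynomials $G_1(x)$, $G_2(y)$, $G_3(z)$, each depending only on its own block, and arbitrary $H_1,H_2,H_3\in\mathbb C[x,y,z]$ with
\[
F(x,y)=G_1(x)H_1(x,y,z)+G_2(y)H_2(x,y,z)+G_3(z)H_3(x,y,z).
\]
The aim is to kill the third summand by substituting a zero of $G_3$, leaving a two-block Cartesian representation of $F$.

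First I will check that $G_3$ has a zero $z^{(0)}=(z_1^{(0)},z_2^{(0)})\in\mathbb C^2$. This is the only point needing any justification. Since $G_3$ is non-constant, it depends nontrivially on, say, $z_1$. Write $G_3$ as a polynomial in $z_1$ whose coefficients are polynomials in $z_2$, and take its top coefficient $c(z_2)$ in $z_1$, which is not identically zero. I will fix $z_2^{(0)}$ with $c(z_2^{(0)})\neq 0$ if $\deg_{z_1}G_3\ge 1$. Then $G_3(z_1,z_2^{(0)})$ is a non-constant univariate polynomial, and because $\mathbb C$ is algebraically closed it has a root $z_1^{(0)}$. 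Alternatively, the weak Nullstellensatz applied to the proper ideal $(G_3)$ gives the same conclusion.

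Next I will substitute $z=z^{(0)}$ into the identity. The left side does not depend on $z$, and $G_1$, $G_2$ do not involve $z$, so we obtain, in $\mathbb C[x_1,x_2,y_1,y_2]$,
\[
F(x,y)=G_1(x)\,H_1(x,y,z^{(0)})+G_2(y)\,H_2(x,y,z^{(0)}).
\]
Here $G_1(x)$ and $G_2(y)$ are still non-constant, since specialization does not touch them. The polynomials $H_j(x,y,z^{(0)})$ are arbitrary elements of $\mathbb C[x,y]$, which the two-block definition allows.

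This is exactly a two-block Cartesian representation of $F$, contradicting the hypothesis, so $f$ is non-Cartesian in the three-block sense. The same argument extends verbatim to lifting by any number of extra blocks, by specializing each extra block at a zero of its $G_j$.
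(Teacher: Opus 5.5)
Your proposal is correct and follows essentially the same route as the paper: specialize the third block at a zero $z^{(0)}$ of $G_3$, which leaves a two-block Cartesian representation $F = G_1(x)H_1(x,y,z^{(0)}) + G_2(y)H_2(x,y,z^{(0)})$ and contradicts the hypothesis. Your explicit argument that the non-constant $G_3$ has a zero in $\mathbb{C}^2$ is a detail the paper simply asserts.
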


\begin{proof}
Assume, for contradiction, that \(f\) is Cartesian in the three-block sense.
Then there exist non-constant polynomials
\[
G_1(x)\in\mathbb C[x_1,x_2],\quad
G_2(y)\in\mathbb C[y_1,y_2],\quad
G_3(z)\in\mathbb C[z_1,z_2],
\]
and polynomials \(H_1,H_2,H_3\in\mathbb C[x,y,z]\) such that
\begin{equation}\label{eq:3block_cart}
f(x,y,z)
=
G_1(x)H_1(x,y,z)
+
G_2(y)H_2(x,y,z)
+
G_3(z)H_3(x,y,z).
\end{equation}
Since \(f(x,y,z)=F(x,y)\) is independent of \(z\), \eqref{eq:3block_cart}
implies
\begin{equation}\label{eq:F_in_ideal}
F(x,y)
\in
\bigl(G_1(x),\,G_2(y),\,G_3(z)\bigr)
\subset \mathbb C[x,y,z].
\end{equation}

Because \(G_3\) is non-constant, there exists \(z_0\in\mathbb C^2\) such that
\(G_3(z_0)=0\). Evaluating \eqref{eq:F_in_ideal} at \(z=z_0\), we obtain
\[
F(x,y)
=
G_1(x)H_1(x,y,z_0)
+
G_2(y)H_2(x,y,z_0),
\]
which shows that
\[
F(x,y)\in\bigl(G_1(x),\,G_2(y)\bigr)\subset\mathbb C[x,y].
\]
Thus \(F\) is Cartesian in the two-block sense, contradicting the assumption.
\end{proof}

\medskip

We now apply this lifting principle to transfer sharp incidence lower bounds
from the two-block setting to the three-block setting.

Let
\[
f:\ (\mathbb C^2)^3 \to \mathbb C
\]
be a polynomial, and denote its zero set by
\[
Z(f):=\{(x,y,z)\in(\mathbb C^2)^3:\ f(x,y,z)=0\}.
\]
For finite sets
\[
P,Q,R\subset\mathbb C^2,
\]
we consider the quantity
\[
|Z(f)\cap(P\times Q\times R)|.
\]

\begin{proposition}[Sharp lower bound via lifting]
\label{prop:3block_lifting_lower}
Let \(F(x,y)\in\mathbb C[x_1,x_2,y_1,y_2]\) be a non-Cartesian polynomial.
Assume that there exist finite sets \(P,Q\subset\mathbb C^2\) with
\(|P|=|Q|=n\) such that
\[
|Z(F)\cap(P\times Q)|\sim n^{4/3}.
\]
Define
\[
f(x,y,z):=F(x,y),
\qquad (x,y,z)\in(\mathbb C^2)^3.
\]
Then \(f\) is non-Cartesian in the three-block sense, and for any finite set
\(R\subset\mathbb C^2\) with \(|R|=n\),
\[
|Z(f)\cap(P\times Q\times R)|
=
|Z(F)\cap(P\times Q)|\cdot |R|
\sim n^{7/3}.
\]

Moreover, for every finite sets $P_1,Q_1,R_1\subset \mathbb{C}^2$ with $|P_1|=|Q_1|=|R_1|=n$ we have
\[
|Z(f)\cap(P_1\times Q_1\times R_1)|
=
|Z(F)\cap(P_1\times Q_1)|\cdot |R_1|
\lesssim n^{7/3}.
\]
\end{proposition}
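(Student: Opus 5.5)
The plan has three parts: non-Cartesianity, an exact product formula for the zero count, and then the two size bounds.

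\emph{Non-Cartesianity.} This is exactly Lemma~\ref{lem:lifting_non_cartesian} applied to $F$, so nothing new is needed.

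\emph{Counting identity.} Since $f(x,y,z)=F(x,y)$ does not depend on $z$, a triple $(p,q,r)$ lies in $Z(f)$ if and only if $(p,q)\in Z(F)$, whatever $r$ is. So for arbitrary finite $P_1,Q_1,R_1\subset\mathbb C^2$,
\[
Z(f)\cap(P_1\times Q_1\times R_1)=\bigl(Z(F)\cap(P_1\times Q_1)\bigr)\times R_1 ,
\]
and taking cardinalities gives
\[
|Z(f)\cap(P_1\times Q_1\times R_1)|=|Z(F)\cap(P_1\times Q_1)|\cdot|R_1|
\]
with no assumption beyond finiteness.

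\emph{Lower bound.} Apply the identity with the given $P,Q$, for which $|Z(F)\cap(P\times Q)|\sim n^{4/3}$, and any $R$ with $|R|=n$. This yields the count $\sim n^{4/3}\cdot n=n^{7/3}$.

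\emph{Upper bound.} By the identity it suffices to show $|Z(F)\cap(P_1\times Q_1)|\lesssim n^{4/3}$ whenever $|P_1|=|Q_1|=n$. For this I would cite the non-Cartesian incidence theorem of \cite{MojarradPhamValculescuZeeuw} recalled in the introduction:
\[
|Z(F)\cap(P_1\times Q_1)|\lesssim |P_1|^{2/3}|Q_1|^{2/3}+|P_1|+|Q_1| .
\]
With $|P_1|=|Q_1|=n$ this is $O(n^{4/3})$, and multiplying by $|R_1|=n$ gives $\lesssim n^{7/3}$.

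\emph{Main obstacle.} The only point needing care is the hypotheses of the cited bound. In \cite{MojarradPhamValculescuZeeuw} the estimate is stated for non-Cartesian $F$, and the implied constant depends on $\deg F$; that is acceptable because $F$ is fixed. If that theorem additionally requires $F$ to be irreducible, or imposes some other mild condition, I would handle it by splitting $F$ into irreducible factors. At least one factor is non-Cartesian, since the ideal $(G_1,G_2)$ is prime for irreducible $G_1,G_2$ (Lemma~\ref{RadicalIdealLemma}). I would then bound each factor's contribution separately. Apart from this, the proof is the formal identity above.
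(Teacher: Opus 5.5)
Your argument is the paper's own: $Z(f)=Z(F)\times\mathbb{C}^2$ gives the product formula, the lower bound is substitution, and non-Cartesianity is Lemma~\ref{lem:lifting_non_cartesian}. For the upper bound, which the paper only says ``can be proven similarly'', you make explicit the needed non-Cartesian estimate of \cite{MojarradPhamValculescuZeeuw}. One correction to your contingency remark: you would need that \emph{every} irreducible factor of a non-Cartesian $F$ is non-Cartesian. This holds trivially, since $F_1\in(G,H)$ forces $F_1F_2\in(G,H)$. Primeness of $(G,H)$ is not what you need: Lemma~\ref{RadicalIdealLemma} only asserts radicality, and primeness would give the converse implication. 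Moreover, ``at least one factor'' would not let you bound every factor's contribution.
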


\begin{proof}
Since \(f(x,y,z)=F(x,y)\) is independent of \(z\), we have
\[
Z(f)=Z(F)\times \mathbb C^2.
\]
Therefore,
\[
Z(f)\cap(P\times Q\times R)
=
\bigl(Z(F)\cap(P\times Q)\bigr)\times R,
\]
and hence
\[
|Z(f)\cap(P\times Q\times R)|
=
|Z(F)\cap(P\times Q)|\cdot |R|.
\]
Substituting \(|Z(F)\cap(P\times Q)|\sim n^{4/3}\) and \(|R|=n\) yields
the desired bound. The claim for $P_1,Q_1,R_1$ can be proven similarly. The non-Cartesianity of \(f\) follows from
Lemma~\ref{lem:lifting_non_cartesian}.
\end{proof}

\subsubsection{A truly higher dimensional example on $\mathbb{C}^2\times \mathbb{C}^2\times \mathbb{C}^2$} The examples provided by Proposition \ref{prop:3block_lifting_lower} are of lower dimensional nature. In this section we present a truly higher dimensional example, also in arbitrarily large degrees. 

\begin{proposition}[A high-degree example with all fibres non-Cartesian]
Let
\[
F\bigl((x_1,x_2),(y_1,y_2),(z_1,z_2)\bigr)
=
x_2^{\,d}+x_1^2+y_1x_1+y_2+z_1,
\]
where $d\ge 3$ is an odd integer. Then for every fixed
\[
z=(z_1,z_2)\in \mathbb{C}^2,
\]
the polynomial
\[
F_z(x,y):=F(x,y,z)
=
x_2^{\,d}+x_1^2+y_1x_1+y_2+z_1
\]
is non-Cartesian as a polynomial on
\[
\mathbb{C}^2_x\times \mathbb{C}^2_y.
\]
In particular, the set of exceptional parameters $z\in \mathbb{C}^2$ for which
$F_z$ is Cartesian is empty. 

As a consequence:  

i) For every $P_1,P_2,P_3\subset \mathbb{C}^2$ finite sets and  $|P_1|=|P_2|=|P_3|$, let $W_1=\{F=0\}\cap (P_1\times P_2\times P_3)$ then 
$$|W_1|\lesssim n^{7/3}.$$

ii) There are $P_1,P_2,P_3\subset \mathbb{C}^2$ finite sets with  $|P_1|=|P_2|=|P_3|$, so that if $W_1=\{F=0\}\cap (P_1\times P_2\times P_3)$, then  
$$ |W_1|\sim n^{7/3}.$$

\end{proposition}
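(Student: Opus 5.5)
The plan is to apply Theorem~\ref{TheoremSufficientConditions} directly to each fibre $F_z$. The block $\mathbb{C}^2_x$ plays the role of $(x,y)$ and the block $\mathbb{C}^2_y$ plays the role of $(z,t)$. Throughout, write $c=z_1$, so that $F_z=x_2^d+x_1^2+y_1x_1+y_2+c$.

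\textbf{Condition (1).} $F_z$ is linear in $y_2$ with coefficient $1$, so it is irreducible. It also depends on $x_2$, so $F_z\notin\mathbb{C}[y_1,y_2]$.

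\textbf{Condition (2).} Fix $(y_1,y_2)$ and complete the square in $x_1$. The fibre becomes
\[
u^2+x_2^d+r,\qquad u=x_1+\tfrac{y_1}{2},\quad r=y_2+c-\tfrac{y_1^2}{4}.
\]
This polynomial is irreducible in $\mathbb{C}[u,x_2]$. Indeed, it is monic quadratic in $u$, so it is reducible only if $-(x_2^d+r)$ is a square in $\mathbb{C}[x_2]$. That is impossible because $d$ is odd. Hence the set of $y$ with reducible fibre is empty, in particular finite.

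\textbf{Condition (3).} Suppose $F_z(x,y)=\lambda F_z(x,y')$ as polynomials in $x$. Comparing the coefficients of $x_2^d$ gives $\lambda=1$. Comparing the coefficients of $x_1$ then gives $y_1=y_1'$, and comparing constant terms gives $y_2=y_2'$. So each such set is a single point.

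Theorem~\ref{TheoremSufficientConditions} therefore shows that $F_z$ is non-Cartesian for every $z$. The set of exceptional parameters is empty.

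\textbf{Consequence (i).} For each $z\in P_3$, the fibre $F_z$ is non-Cartesian of degree $d$. The result of \cite{MojarradPhamValculescuZeeuw} then gives
\[
|Z(F_z)\cap(P_1\times P_2)|\lesssim n^{4/3}.
\]
The implied constant depends only on $d$, hence is uniform in $z$. Summing over the $n$ points of $P_3$ gives $|W_1|\lesssim n^{7/3}$, exactly as in Step~2 of the proof of Theorem~\ref{Theorem2}. The one point to be careful about is this uniformity of the constant across all $z$. It holds because the fibres form a degree-bounded family and, by the above, there is no exceptional $z$ at all.

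\textbf{Consequence (ii).} I would mimic the construction of Section~5.1. For a fixed $x$ and $z$, the equation $F=0$ reads
\[
y_2=-x_1y_1-(x_2^d+x_1^2+z_1),
\]
which is a line in the $y$-plane with slope $-x_1$ and intercept $-(x_2^d+x_1^2+z_1)$.
\begin{enumerate}
\item Take $P_2=\{1,\dots,s\}\times\{1,\dots,2s^2\}$.
\item Take the family of lines $y_2=my_1+t$ with $m\in\{1,\dots,s\}$ and $t\in\{1,\dots,s^2\}$. As computed in Section~5.1, each of these $s^3$ lines contains exactly $s$ points of $P_2$.
\item Let $P_3=\{(0,j):1\le j\le n\}$, so that $z_1=0$ for every point of $P_3$.
\item For each pair $(m,t)$, put $x_1=-m$ and choose $x_2\in\mathbb{C}$ with $x_2^d=-t-m^2$. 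This is always solvable over $\mathbb{C}$, and it makes the line attached to $x$ equal to $y_2=my_1+t$. Let $P_1$ be the resulting $s^3$ points, which are distinct since distinct $(m,t)$ give distinct lines.
\end{enumerate}
With $n\sim s^3$, each $z\in P_3$ contributes $s^4\sim n^{4/3}$ incidences. Since $z_2$ does not enter $F$, the contribution is the same for every $z\in P_3$. The total is therefore $|W_1|\sim n^{7/3}$, and this matches the upper bound in (i).
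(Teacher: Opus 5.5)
Your proposal is correct, and its skeleton matches the paper's. Both proofs verify hypotheses (1)--(3) of Theorem~\ref{TheoremSufficientConditions} for every fibre $F_z$, sum the two-block $n^{4/3}$ bound over $P_3$ for (i), and use the same slope/intercept construction for (ii). In (ii) the paper allows an arbitrary first coordinate $c_0$ in $P_3$ and proves the points of $P_1$ distinct by taking $d$-th powers; you take $c_0=0$ and argue via the attached lines, which is equivalent.

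Where you genuinely differ is in the two irreducibility checks. The paper treats $F_z$ as the binomial $x_2^d+a$ over $\mathbb{C}(x_1,y_1,y_2)$, and each $y$-fibre as $x_2^d+b$ over $\mathbb{C}(x_1)$. It then invokes the binomial (Capelli) irreducibility criterion, checking that the quadratics $a,b$ in $x_1$ are not $p$-th powers for odd primes $p\mid d$, and passes back to the polynomial ring. You instead note that $F_z$ is linear in $y_2$ with unit coefficient. For the fibres, you complete the square in $x_1$ and use that $-(x_2^d+r)$ cannot be a square in $\mathbb{C}[x_2]$ because its degree is odd.

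Your route is more elementary, and it pinpoints where oddness of $d$ enters. It is not needed at all for condition (1), and it is needed only for condition (2). There it is genuinely necessary: for even $d$, every fibre over the parabola $y_2+c=y_1^2/4$ factors as $(u+ix_2^{d/2})(u-ix_2^{d/2})$, so hypothesis (2) fails. The paper's binomial argument has the advantage of handling (1) and (2) uniformly. It also adapts to binomial-type families $x_2^d+a$ with more general $a$, where completing the square is unavailable.
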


\begin{proof}
Fix
\[
z=(z_1,z_2)\in \mathbb{C}^2,
\]
and write
\[
c:=z_1\in \mathbb{C}.
\]
Set
\[
H_c(x,y):=x_2^{\,d}+x_1^2+y_1x_1+y_2+c.
\]
Thus
\[
F_z(x,y)=H_c(x,y).
\]
We will show that $H_c$ is non-Cartesian for every $c\in \mathbb{C}$ by verifying
the hypotheses of Theorem~4.1.

Recall that in the notation of Theorem~4.1, the first block is
\[
x=(x_1,x_2)\in \mathbb{C}^2,
\]
and the second block is
\[
y=(y_1,y_2)\in \mathbb{C}^2.
\]

\medskip

\noindent
\textbf{Step 1. $H_c$ is irreducible and $H_c\notin \mathbb{C}[y_1,y_2]$.}

The second assertion is immediate: the polynomial $H_c$ depends nontrivially on
$x_1$ and $x_2$, and therefore does not belong to $\mathbb{C}[y_1,y_2]$.

It remains to prove irreducibility of $H_c$ in
\[
\mathbb{C}[x_1,x_2,y_1,y_2].
\]

We regard $H_c$ as a polynomial in the single variable $x_2$ over the field
\[
K:=\mathbb{C}(x_1,y_1,y_2).
\]
Then
\[
H_c=x_2^{\,d}+a,
\qquad
a:=x_1^2+y_1x_1+y_2+c\in K.
\]
Thus $H_c$ is a polynomial of the form
\[
T^d+a\in K[T].
\]

We claim that $T^d+a$ is irreducible over $K$. Since $d$ is odd, every prime divisor
$p$ of $d$ is odd. By the standard irreducibility criterion for binomials over a field
of characteristic $0$, it suffices to check that $a$ is not a $p$-th power in $K$
for any prime $p\mid d$.

Assume for contradiction that for some prime $p\mid d$ there exists
\[
r(x_1,y_1,y_2)\in K
\]
such that
\[
a=r^p.
\]
Write
\[
r=\frac{u}{v},
\]
where $u,v\in \mathbb{C}[x_1,y_1,y_2]$ are coprime and $v\neq 0$. Then
\[
a=\frac{u^p}{v^p},
\]
so
\[
a\,v^p=u^p.
\]
Since $a=x_1^2+y_1x_1+y_2+c$ is a polynomial and $\gcd(u,v)=1$, it follows that
$v^p$ divides $u^p$ in the UFD $\mathbb{C}[x_1,y_1,y_2]$, hence $v$ divides $u$.
By coprimality, $v$ must be a nonzero constant. Therefore $r\in \mathbb{C}[x_1,y_1,y_2]$,
and so $a$ is a $p$-th power in the polynomial ring.

Now view both sides as polynomials in the variable $x_1$ with coefficients in
\[
\mathbb{C}[y_1,y_2].
\]
The polynomial
\[
a=x_1^2+y_1x_1+y_2+c
\]
has degree $2$ in $x_1$. If
\[
a=r^p,
\]
then the degree in $x_1$ of the right-hand side is divisible by $p$.
Since $p$ is odd and $2$ is not divisible by $p$, this is impossible.

Therefore $a$ is not a $p$-th power in $K$ for any prime divisor $p$ of $d$.
Hence $T^d+a$ is irreducible over $K$, and so $H_c$ is irreducible in
\[
\mathbb{C}[x_1,x_2,y_1,y_2].
\]

This proves condition~(1) in Theorem~4.1.

\medskip

\noindent
\textbf{Step 2. For every $(y_1,y_2)\in \mathbb{C}^2$, the specialized polynomial in
$x$ is irreducible.}

Fix arbitrary
\[
(\alpha,\beta)\in \mathbb{C}^2.
\]
Then the specialization of $H_c$ at $(y_1,y_2)=(\alpha,\beta)$ is
\[
H_c(x,\alpha,\beta)
=
x_2^{\,d}+x_1^2+\alpha x_1+\beta+c.
\]
We must show that this polynomial is irreducible in
\[
\mathbb{C}[x_1,x_2].
\]

Again, regard it as a polynomial in the single variable $x_2$ over the field
\[
L:=\mathbb{C}(x_1).
\]
Then
\[
H_c(x,\alpha,\beta)=x_2^{\,d}+b,
\qquad
b:=x_1^2+\alpha x_1+\beta+c\in L.
\]

As in Step~1, it suffices to show that $b$ is not a $p$-th power in $L$ for any prime
$p\mid d$. Suppose
\[
b=s^p
\]
for some $s\in L$. Writing $s=u/v$ with coprime polynomials
\[
u,v\in \mathbb{C}[x_1],
\]
the same coprimality argument as before shows that in fact $v$ must be constant,
hence $s\in \mathbb{C}[x_1]$ and $b$ is a $p$-th power in $\mathbb{C}[x_1]$.

But
\[
b=x_1^2+\alpha x_1+\beta+c
\]
has degree $2$ in $x_1$, while a $p$-th power in $\mathbb{C}[x_1]$ has degree divisible
by $p$. Since $p$ is odd, this is impossible.

Therefore $b$ is not a $p$-th power in $L$ for any prime $p\mid d$, and hence
\[
x_2^{\,d}+b
\]
is irreducible over $L$, and therefore irreducible in $\mathbb{C}[x_1,x_2]$.

We conclude that for every $(\alpha,\beta)\in \mathbb{C}^2$, the specialized polynomial
\[
x\longmapsto H_c(x,\alpha,\beta)
\]
is irreducible in $\mathbb{C}[x_1,x_2]$.

Hence the set
\[
\left\{(\alpha,\beta)\in \mathbb{C}^2:
H_c(x,\alpha,\beta)\text{ is reducible in }\mathbb{C}[x_1,x_2]\right\}
\]
is empty, and in particular finite.

Thus condition~(2) of Theorem~4.1 holds.

\medskip

\noindent
\textbf{Step 3. There are no nontrivial scalar coincidences among the fibres in the
$y$-variables.}

Fix
\[
(\alpha,\beta)\in \mathbb{C}^2.
\]
Suppose there exist
\[
(\alpha',\beta')\in \mathbb{C}^2
\qquad\text{and}\qquad
\lambda\in \mathbb{C}
\]
such that
\[
H_c(x,\alpha,\beta)=\lambda\, H_c(x,\alpha',\beta')
\]
as polynomials in $(x_1,x_2)$.

Expanding both sides, this means
\[
x_2^{\,d}+x_1^2+\alpha x_1+\beta+c
=
\lambda\bigl(x_2^{\,d}+x_1^2+\alpha' x_1+\beta'+c\bigr).
\]
Now compare coefficients.

From the coefficient of $x_2^{\,d}$ we obtain
\[
1=\lambda.
\]
From the coefficient of $x_1^2$ we again obtain
\[
1=\lambda,
\]
which is consistent with the previous identity. Since $\lambda=1$, comparing the
coefficient of $x_1$ yields
\[
\alpha=\alpha',
\]
and comparing the constant term yields
\[
\beta+c=\beta'+c,
\]
hence
\[
\beta=\beta'.
\]

Therefore the only pair $(\alpha',\beta')$ for which
\[
H_c(x,\alpha,\beta)=\lambda\,H_c(x,\alpha',\beta')
\]
can hold is the trivial one
\[
(\alpha',\beta')=(\alpha,\beta),
\qquad \lambda=1.
\]

It follows that for every $(\alpha,\beta)\in \mathbb{C}^2$, the set
\[
\left\{
(\alpha',\beta')\in \mathbb{C}^2:\ \exists \lambda\in \mathbb{C}\text{ such that }
H_c(x,\alpha,\beta)=\lambda H_c(x,\alpha',\beta')
\right\}
\]
consists of exactly one point, and in particular is finite.

Thus condition~(3) of Theorem~4.1 holds.

\medskip

We have verified all three hypotheses of Theorem~4.1 for the polynomial
\[
H_c(x,y)=x_2^{\,d}+x_1^2+y_1x_1+y_2+c.
\]
Therefore Theorem~4.1 implies that $H_c$ is non-Cartesian on
\[
\mathbb{C}^2_x\times \mathbb{C}^2_y,
\]
for arbitrary $c$. 

\noindent
\textbf{Step 4. Proofs of consequences.}  

i) Let $P_1,P_2,P_3\subset \mathbb{C}^2$ be finite sets with $|P_1|=|P_2|=|P_3|=n$. For each $z\in P_3$, by the above  the polynomial $(x,y)\mapsto H(x,y,z)$ is non-Cartesian and has degree $d$. Hence, by the intersection estimates on $\mathbb{C}^2\times \mathbb{C}^2$, we have
$$|\{(x,y): H(x,y,z)=0\}\cap (P_1\times P_2)|\lesssim n^{4/3},$$
where the constant is independent of $z$. Summing up over $P_3$, we obtain
$$|\{(x,y,z): H(x,y,z)=0\}\cap (P_1\times P_2\times P_3)|\lesssim n^{7/3},$$
as wanted. 

ii) Let \(s\geq 1\) be a large integer, and set
\[
n:=s^3.
\]

We first define the set \(P_3\). Fix an arbitrary constant \(c_0\in\mathbb C\), and let
\[
P_3
:=
\{(c_0,r):1\leq r\leq s^3\}
\subset \mathbb C^2.
\]
Thus
\[
|P_3|=s^3=n.
\]

Next, define
\[
A:=\{1,2,\dots,s\},
\qquad
B:=\{1,2,\dots,2s^2\},
\]
and set
\[
P_2:=A\times B
\subset \mathbb C^2.
\]
Hence
\[
|P_2|
=
|A||B|
=
s\cdot 2s^2
=
2s^3
\sim n.
\]

We now construct the set \(P_1\).
For every pair
\[
1\leq m\leq s,
\qquad
1\leq b\leq s^2,
\]
choose a complex number \(u_{m,b}\in\mathbb C\) satisfying
\[
u_{m,b}^d
=
-m^2-b-c_0.
\]

Define
\[
p_{m,b}
:=
(-m,u_{m,b})
\in \mathbb C^2,
\]
and let
\[
P_1
:=
\{p_{m,b}:1\leq m\leq s,\ 1\leq b\leq s^2\}.
\]

We claim that all points \(p_{m,b}\) are distinct.
Indeed, if
\[
(-m,u_{m,b})
=
(-m',u_{m',b'}),
\]
then necessarily \(m=m'\). Consequently,
\[
u_{m,b}=u_{m,b'}.
\]
Taking \(d\)-th powers yields
\[
-m^2-b-c_0
=
-m^2-b'-c_0,
\]
which implies \(b=b'\).
Therefore all points are distinct, and hence
\[
|P_1|
=
s\cdot s^2
=
s^3
=
n.
\]

We now estimate the number of incidences.

Fix
\[
p_{m,b}=(-m,u_{m,b})\in P_1,
\qquad
z=(c_0,r)\in P_3.
\]
For every \(a\in A\), define
\[
y(a):=(a,ma+b)\in\mathbb C^2.
\]

We first verify that \(y(a)\in P_2\).
Since
\[
1\leq a\leq s,
\qquad
1\leq m\leq s,
\qquad
1\leq b\leq s^2,
\]
we have
\[
1
\leq
ma+b
\leq
s^2+s^2
=
2s^2.
\]
Thus
\[
y(a)\in A\times B=P_2.
\]

Next, we compute
\[
F\bigl(p_{m,b},y(a),z\bigr).
\]
Substituting
\[
x_1=-m,
\qquad
x_2=u_{m,b},
\qquad
y_1=a,
\qquad
y_2=ma+b,
\qquad
z_1=c_0,
\]
we obtain
\begin{align*}
F\bigl(p_{m,b},y(a),z\bigr)
&=
u_{m,b}^d
+
(-m)^2
+
a(-m)
+
(ma+b)
+
c_0 \\
&=
u_{m,b}^d
+
m^2
-am
+ma
+b
+
c_0 \\
&=
u_{m,b}^d+m^2+b+c_0.
\end{align*}
By the defining relation
\[
u_{m,b}^d=-m^2-b-c_0,
\]
we conclude that
\[
F\bigl(p_{m,b},y(a),z\bigr)=0.
\]

Therefore, for every choice of
\[
(m,b)\in\{1,\dots,s\}\times\{1,\dots,s^2\},
\]
and every \(a\in A\), we obtain a point
\[
\bigl(p_{m,b},y(a),z\bigr)
\in
Z(F)\cap(P_1\times P_2\times P_3).
\]

For each fixed \(z\in P_3\), there are
\[
s\cdot s^2=s^3
\]
choices of \((m,b)\), and for each such pair there are \(s\) choices of \(a\).
Hence
\[
|Z(F_z)\cap(P_1\times P_2)|
\geq
s^3\cdot s
=
s^4.
\]

Since
\[
|P_3|=s^3,
\]
we obtain
\[
|Z(F)\cap(P_1\times P_2\times P_3)|
\geq
s^4\cdot s^3
=
s^7.
\]

Finally, since \(n=s^3\), we have
\[
s^7
=
(s^3)^{7/3}
=
n^{7/3}.
\]
Therefore,
\[
|Z(F)\cap(P_1\times P_2\times P_3)|
\gtrsim
n^{7/3},
\]
as claimed.

\end{proof}

\begin{remark}
The above proposition is stronger than the fibre property used in Step~1 of the proof
of Theorem \ref{Theorem2}. There one only needs that, outside a finite exceptional set in the later
variables, the restriction to the first two blocks is non-Cartesian. In the present
example, there is in fact no exceptional set at all.
\end{remark}

\begin{remark}
The same proof works more generally for
\[
F\bigl((x_1,x_2),(y_1,y_2),(z_1,z_2)\bigr)
=
x_2^{\,d}+x_1^2+y_1x_1+y_2+\Phi(z_1,z_2),
\]
where $d\ge 3$ is odd and $\Phi\in \mathbb{C}[z_1,z_2]$ is arbitrary. For a fixed
$z\in \mathbb{C}^2$, the fiber is then
\[
x_2^{\,d}+x_1^2+y_1x_1+y_2+\Phi(z),
\]
which has exactly the same form as above with the constant $c$ replaced by $\Phi(z)$.
Hence every fiber is again non-Cartesian.
\end{remark}

\begin{remark}
The oddness of $d$ is used in the irreducibility argument. More precisely, if $p$ is a
prime divisor of $d$, then $p$ is odd, and the polynomial
\[
x_1^2+y_1x_1+y_2+c
\]
cannot be a $p$-th power in the relevant rational function field, since its degree in
$x_1$ is equal to $2$. This is the key reason why the binomial irreducibility criterion
applies so cleanly in this setting.
\end{remark}

\subsubsection{A sharpness example for Theorem~5.3 on $\mathbb{C}^{n_1}\times \mathbb{C}^{n_2}$} As usual, we use $Gr_2(\mathbb{C}^N)$ to denote the Grasmannian manifold of complex vector subspaces of $\mathbb{C}^N$ of dimension $2$.

\begin{proposition}
Let $n_1,n_2\ge 2$, and assume for simplicity that $n_1\le n_2$. Write
\[
x=(x_1,\dots,x_{n_1})\in \mathbb{C}^{n_1},\qquad
y=(y_1,\dots,y_{n_2})\in \mathbb{C}^{n_2}.
\]
Define
\[
F(x,y):=x_1^2+\cdots+x_{n_1}^2+1-2(x_1y_1+\cdots+x_{n_1}y_{n_1}).
\]
Then there exists a nonempty Zariski open subset
\[
\Omega \subset
Gr_2(\mathbb C^{n_1})
\times
Gr_2(\mathbb C^{n_2})
\]
such that for every $(U,V)\in \Omega$, the restriction
\[
F|_{U\times V}
\]
is linearly equivalent to the polynomial
\[
f(x,y,a,b)=x^2+y^2+1-2ax-2by.
\]
In particular, for every $(U,V)\in \Omega$, the sharp incidence exponent $4/3$ holds for
$F|_{U\times V}$ in the sense that there exist finite sets
\[
P\subset U,\qquad Q\subset V,\qquad |P|\sim |Q|\sim n,
\]
such that
\[
\bigl| Z(F|_{U\times V})\cap (P\times Q)\bigr| \sim n^{4/3}.
\]

As a consequence:  

i) For every $P_1\subset \mathbb{C}^{n_1}$ and $Q_1\subset \mathbb{C}^{n_2}$ effective algebraic cycles of pure codimension $2$ and $\deg (P_1)=\deg (Q_1)=n$, let $W_1$ be the union of components of $P_1\times Q_1$ which belong to the set $\{F=0\}$, then 
$$\deg (W_1)\lesssim n^{4/3}.$$

ii) There are $P_1\subset \mathbb{C}^{n_1}$ and $Q_1\subset \mathbb{C}^{n_2}$ effective algebraic cycles of pure codimension $2$ and $\deg (P_1)=\deg (Q_1)=n$, such that if $W_1$ is the union of components of $P_1\times Q_1$ which belong to the set $\{F=0\}$, then 
$$\deg (W_1)\sim n^{4/3}.$$

\label{PropositionHigherDimensionalExampleGenericSliceGood}\end{proposition}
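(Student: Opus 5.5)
Fix $(U,V)\in Gr_2(\mathbb C^{n_1})\times Gr_2(\mathbb C^{n_2})$, choose bases $U=\mathrm{span}(u_1,u_2)$ and $V=\mathrm{span}(v_1,v_2)$, and write points as $x=su_1+tu_2$, $y=\alpha v_1+\beta v_2$. The plan is to compute the restriction explicitly and normalise it by linear changes of the two blocks separately; these preserve the Cartesian or non-Cartesian property and all incidence counts. Let $q(x)=x_1^2+\cdots+x_{n_1}^2$ and let $\pi:\mathbb C^{n_2}\to\mathbb C^{n_1}$ be the projection onto the first $n_1$ coordinates. Then
\[
F(x,y)=q(x)+1-2\,B(x,\pi y),
\]
where $B$ is the standard symmetric bilinear form on $\mathbb C^{n_1}$. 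On $U\times V$ this reads $Q_U(s,t)+1-2\,L(s,t;\alpha,\beta)$. Here $Q_U$ is the Gram form of $q$ on $U$, and $L$ is bilinear with $2\times 2$ matrix $M_{ij}=B(u_i,\pi v_j)$.

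\textbf{Defining $\Omega$.} We require two conditions: (a) $q|_U$ is non-degenerate, i.e. $\det(B(u_i,u_j))\ne 0$; and (b) $M$ is invertible. Both are nonvanishing conditions on Pl\"ucker-type polynomial expressions, so they cut out a Zariski open set. Nonemptiness is checked on one example. Take $U=V'=\mathrm{span}(e_1,e_2)$, with $V$ the span of $e_1,e_2\in\mathbb C^{n_2}$; this uses $n_1\le n_2$. Then $M=I$ and $Q_U=s^2+t^2$.

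\textbf{Normalising to $f$.} Over $\mathbb C$, (a) lets us pick a basis of $U$ with $Q_U=s^2+t^2$, that is, an orthonormal basis for $B$. The only genuinely nontrivial point is making sure this orthonormalisation exists; over $\mathbb C$, non-degeneracy suffices. Then, by (b), substituting $(\alpha,\beta)\mapsto M^{-1}(\alpha,\beta)$ in the $V$-block turns $L$ into $s\alpha+t\beta$. The restriction becomes exactly $f(x,y,a,b)=x^2+y^2+1-2ax-2by$, with $(x,y)=(s,t)$ and $(a,b)=(\alpha,\beta)$.

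\textbf{The sharp lower bound on the slice.} The construction of Section~5.1 provides $P,Q\subset\mathbb C^2$ with $|P|\sim|Q|\sim n$ and $|Z(f)\cap(P\times Q)|\sim n^{4/3}$. Pulling these sets back through the linear isomorphisms gives the sets in $U$ and $V$. By Proposition~\ref{Proposition1} with $R=1-a^2-b^2$, the polynomial $f$ is non-Cartesian. Indeed, $R$ is irreducible, so it is not divisible by any $z\pm it-c$.

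\textbf{Consequence i).} This should follow from Theorem~\ref{Theorem2} with $k=2$, applied to $F$. One subtlety is that $F$ is a fixed polynomial, not a generic one. What the proof of Theorem~\ref{Theorem2} actually uses is only that a generic slice $F|_{U\times V}$ is non-Cartesian, and we have just shown this on $\Omega$. So we rerun its final paragraph. Intersect with a generic $(U,V)\in\Omega$. By B\'ezout, $P_1\cap U$ and $Q_1\cap V$ are finite sets of size $n$, and $W_1\cap(U\times V)$ has $\deg W_1$ points. Applying the $\mathbb C^2\times\mathbb C^2$ bound of \cite{MojarradPhamValculescuZeeuw} to $f$ then gives $\deg W_1\lesssim n^{4/3}$.

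\textbf{Consequence ii).} We lift the planar example. Take $P_1=\bigcup_{p\in P}\bigl(p+U^{\perp_c}\bigr)$, a union of $n$ codimension-$2$ affine planes, where $U^{\perp_c}$ is a fixed complement. Similarly, take $Q_1$ to be a union of translates of a complement of $V$, in each case using a complement along which $F$ is constant on the relevant incidences. The clean choice is $U=\mathrm{span}(e_1,e_2)$ and $V=\mathrm{span}(e_1,e_2)$ with complements spanned by the remaining coordinates. Here a problem arises: $F$ also depends on $x_3,\dots$ and $y_3,\dots,y_{n_1}$, so full planes are not contained in $\{F=0\}$.

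I expect this step to be the main obstacle. What I would try first is to choose the cycles so that they meet $\{F=0\}$ properly and count degrees via the generic slice. That is, define $W_1$ through slicing, so that $\deg W_1$ equals the number of slice incidences $\sim n^{4/3}$. Failing that, I would restrict to linear subvarieties on which the extra terms cancel, for instance by fixing the extra coordinates by $x_j=y_j$ for $j\geq 3$. The bookkeeping of which components lie in $\{F=0\}$ is the delicate part.
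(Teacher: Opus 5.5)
Your treatment of $\Omega$, the normal form, the pulled-back $n^{4/3}$ configuration, and consequence (i) is essentially the paper's Steps 1--7.

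The genuine gap is consequence (ii). You never produce $P_1,Q_1$, and neither suggested repair is admissible. $W_1$ is fixed by the statement as the union of the components of $P_1\times Q_1$ lying in $\{F=0\}$, so it cannot be redefined through slicing. A relation such as $x_j=y_j$ couples the two factors, so it cannot be imposed on a product $P_1\times Q_1$.

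The obstruction you noticed is real: (ii) fails for every $n_1\ge 3$. With your $\pi:\mathbb C^{n_2}\to\mathbb C^{n_1}$, write $F(x,y)=x\cdot x+1-2\,x\cdot\pi(y)$ for the standard nondegenerate bilinear form. Let $A\subset\mathbb C^{n_1}$ and $B\subset\mathbb C^{n_2}$ be irreducible of codimension $2$ with $A\times B\subset Z(F)$.
\begin{itemize}
\item Subtracting $F(x,y)=0$ from $F(x,y')=0$ gives $x\cdot(\pi(y)-\pi(y'))=0$ for all $x\in A$ and $y,y'\in B$. Hence $\pi(B)$ lies in a translate of $S^\perp$, where $S$ is the linear span of $A$.
\item The fibres of $\pi$ have dimension $n_2-n_1$, so $\dim\overline{\pi(B)}\ge n_1-2$. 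Therefore $\dim S\le 2$.
\item Since $F(0,y)=1$, we have $0\notin A$. So $A\subsetneq S$, and $n_1-2=\dim A\le 1$.
\end{itemize}
Consequently, for $n_1\ge 4$ one has $W_1=\emptyset$ for every choice of cycles.

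For $n_1=3$ the same computation forces $B=\pi^{-1}(c+S^\perp)$, where the relation $x\cdot x+1=2\,x\cdot c$ on $A$ determines $c$ modulo $S^\perp$. So each component of $P_1$ pairs with at most one component of $Q_1$, and that component is an affine subspace. Hence $\deg W_1\le\deg P_1=n$: (i) holds trivially, but (ii) cannot.

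Only $n_1=2$ survives, and there (ii) is immediate. Take $P_1=P_0$ and $Q_1=\bigcup_{r\in Q_0}\pi^{-1}(r)$. Then $\{p\}\times\pi^{-1}(r)\subset Z(F)$ if and only if $f(p,r)=0$, so $\deg W_1=|Z(f)\cap(P_0\times Q_0)|\sim n^{4/3}$.

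The paper's Step 8 does not get around this. It sweeps $T_U^{-1}(P_0)$ over $U\in Gr_2(\mathbb C^{n_1})$ and projects. That family is finite over the Grassmannian, so it has dimension $2(n_1-2)$ rather than $n_1-2$. Nothing supports the claims that the projection is a codimension-$2$ cycle of degree $n$, and by the computation above no choice of cycles could work.
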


\begin{proof}
We divide the proof into several steps. 

\medskip

\noindent
\textbf{Step 1. Writing $F$ in the form $Q(x)+1-2B(x,y)$.}

Define
\[
Q(x):=x_1^2+\cdots+x_{n_1}^2,
\]
and
\[
B(x,y):=x_1y_1+\cdots+x_{n_1}y_{n_1}.
\]
Then
\[
F(x,y)=Q(x)+1-2B(x,y).
\]
The quadratic form $Q$ is nondegenerate on $\mathbb{C}^{n_1}$, and the bilinear form
$B$ has rank $n_1$.

Let
\[
U\subset \mathbb{C}^{n_1},\qquad V\subset \mathbb{C}^{n_2}
\]
be $2$-dimensional linear subspaces. Then the restriction of $F$ to $U\times V$ is
\[
F|_{U\times V}(u,v)=Q|_U(u)+1-2\,B|_{U\times V}(u,v).
\]
Thus the restriction is always a polynomial consisting of a quadratic form in the
$U$-variables, plus a constant term, plus a bilinear coupling term between the
$U$-variables and the $V$-variables.

\medskip

\noindent
\textbf{Step 2. A generic $2$-plane $U$ sees a nondegenerate quadratic form.}

We claim that there exists a nonempty Zariski open subset
\[
\Omega_Q\subset Gr_2(\mathbb{C}^{n_1})
\]
such that for every $U\in \Omega_Q$, the restriction $Q|_U$ is nondegenerate.

Indeed, fix $U\in Gr_2(\mathbb{C}^{n_1})$, and choose a basis $u_1,u_2$ of $U$.
The restriction $Q|_U$ is represented by the symmetric Gram matrix
\[
G_Q(U)=
\begin{pmatrix}
Q(u_1,u_1) & Q(u_1,u_2)\\
Q(u_2,u_1) & Q(u_2,u_2)
\end{pmatrix},
\]
where $Q(\cdot,\cdot)$ denotes the symmetric bilinear form associated to $Q$.
The condition that $Q|_U$ be degenerate is exactly
\[
\det G_Q(U)=0.
\]
This is an algebraic condition on $U$; equivalently, it defines a Zariski closed subset
of the Grassmannian.

It remains to show that this closed subset is proper. But this is immediate, since
for example on the coordinate plane
\[
U_0:=\mbox{ The vector subspace spanned by }(e_1,e_2)\subset \mathbb{C}^{n_1},
\]
one has
\[
Q|_{U_0}=u_1^2+u_2^2,
\]
which is nondegenerate. Hence the degeneracy locus is a proper Zariski closed subset,
and therefore its complement $\Omega_Q$ is a nonempty Zariski open subset.

\medskip

\noindent
\textbf{Step 3. For a generic pair $(U,V)$, the restricted bilinear form has rank $2$.}

We claim that there exists a nonempty Zariski open subset
\[
\Omega_B\subset Gr_2(\mathbb{C}^{n_1})\times Gr_2(\mathbb{C}^{n_2})
\]
such that for every $(U,V)\in \Omega_B$, the restriction
\[
B|_{U\times V}
\]
has rank $2$.

To see this, let $U\subset \mathbb{C}^{n_1}$ and $V\subset \mathbb{C}^{n_2}$ be
$2$-planes, and choose bases $u_1,u_2$ of $U$ and $v_1,v_2$ of $V$. The bilinear form
$B|_{U\times V}$ is represented by the $2\times 2$ matrix
\[
M_B(U,V)=\bigl(B(u_i,v_j)\bigr)_{1\le i,j\le 2}.
\]
The condition that $B|_{U\times V}$ have rank $<2$ is equivalent to
\[
\det M_B(U,V)=0.
\]
Again, this is an algebraic condition on $(U,V)$, hence defines a Zariski closed subset
of the product Grassmannian.

To prove that this locus is proper, it is enough to produce one pair $(U,V)$ for which
the determinant is nonzero. Take
\[
U_0:=\mbox{ The vector subspace spanned by }(e_1,e_2)\subset \mathbb{C}^{n_1},\qquad
V_0:=\mbox{ The vector subspace spanned by }(f_1,f_2)\subset \mathbb{C}^{n_2},
\]
where $e_i$ and $f_j$ are the standard basis vectors in the $x$-space and $y$-space,
respectively. Then
\[
B|_{U_0\times V_0}(u,v)=u_1v_1+u_2v_2,
\]
so the corresponding matrix is the identity matrix
\[
\begin{pmatrix}
1&0\\
0&1
\end{pmatrix},
\]
whose determinant is $1\neq 0$. Thus the bad locus is proper, and its complement
$\Omega_B$ is a nonempty Zariski open subset.

\medskip

\noindent
\textbf{Step 4. The generic set on which both good properties hold.}

Set
\[
\Omega:=\bigl(\Omega_Q\times Gr_2(\mathbb{C}^{n_2})\bigr)\cap \Omega_B.
\]
Since $\Omega_Q$ and $\Omega_B$ are nonempty Zariski open subsets, so is $\Omega$.
Therefore, for every $(U,V)\in \Omega$, the following two properties hold simultaneously:

\begin{enumerate}
\item the quadratic form $Q|_U$ is nondegenerate;
\item the bilinear form $B|_{U\times V}$ has rank $2$.
\end{enumerate}

\medskip

\noindent
\textbf{Step 5. Normal form of the restriction.}

Fix $(U,V)\in \Omega$. We now show that $F|_{U\times V}$ is linearly equivalent to
\[
x^2+y^2+1-2ax-2by.
\]

Choose linear coordinates $(u_1,u_2)$ on $U$ and $(v_1,v_2)$ on $V$. Since $Q|_U$ is
nondegenerate, there exists an invertible linear change of coordinates on $U$ such that,
in the new coordinates (which we continue to denote by $(u_1,u_2)$),
\[
Q|_U(u)=u_1^2+u_2^2.
\]
Under this change of coordinates, the bilinear form $B|_{U\times V}$ is still a bilinear
form of rank $2$, and hence can be written as
\[
B|_{U\times V}(u,v)=u^T M v
\]
for some invertible $2\times 2$ matrix $M$.

Now perform a linear change of coordinates on $V$ by setting
\[
w:=Mv.
\]
Since $M$ is invertible, this is an invertible linear coordinate change on $V$.
In the coordinates $w=(w_1,w_2)$, we have
\[
u^TMv=u^Tw=u_1w_1+u_2w_2.
\]
Therefore, in the coordinates $(u_1,u_2)$ on $U$ and $(w_1,w_2)$ on $V$,
\[
F|_{U\times V}(u,w)
=
u_1^2+u_2^2+1-2(u_1w_1+u_2w_2).
\]
Renaming the variables by
\[
x:=u_1,\qquad y:=u_2,\qquad a:=w_1,\qquad b:=w_2,
\]
we obtain
\[
F|_{U\times V}\sim x^2+y^2+1-2ax-2by.
\]
This proves the asserted linear equivalence.

\medskip

\noindent
\textbf{Step 6. Sharpness of the exponent $4/3$ for the generic restrictions.}

Let
\[
f(x,y,a,b):=x^2+y^2+1-2ax-2by.
\]
By the known sharp example in the $\mathbb{C}^2\times \mathbb{C}^2$ setting, there
exist finite sets
\[
P_0,Q_0\subset \mathbb{C}^2,\qquad |P_0|\sim |Q_0|\sim n,
\]
such that
\[
|Z(f)\cap (P_0\times Q_0)|\sim n^{4/3}.
\]

Now fix $(U,V)\in \Omega$. By Step~5, there exist invertible linear maps
\[
T_U:U\to \mathbb{C}^2,\qquad T_V:V\to \mathbb{C}^2,
\]
such that
\[
F|_{U\times V}(u,v)=0
\quad\Longleftrightarrow\quad
f\bigl(T_U(u),T_V(v)\bigr)=0.
\]
Define
\[
P:=T_U^{-1}(P_0)\subset U,\qquad Q:=T_V^{-1}(Q_0)\subset V.
\]
Then
\[
|P|=|P_0|\sim n,\qquad |Q|=|Q_0|\sim n,
\]
and the bijectivity of $T_U$ and $T_V$ implies
\[
\bigl|Z(F|_{U\times V})\cap (P\times Q)\bigr|
=
|Z(f)\cap (P_0\times Q_0)|
\sim n^{4/3}.
\]
This proves that the exponent $4/3$ is sharp for every restriction corresponding to a
pair $(U,V)\in \Omega$.

\medskip

\noindent
\textbf{Step 7. Proof of consequence (i).} 

We argue as in the proof of Theorem \ref{Theorem2}. For every $P_1\subset \mathbb{C}^{n_1}$ and $Q_1\subset \mathbb{C}^{n_2}$ effective algebraic cycles of pure codimension $2$ and $\deg (P_1)=\deg (Q_1)=n$, let $W_1$ be the union of components of $P_1\times Q_1$ which belong to the set $\{F=0\}$, then we want to show that 
$$\deg (W_1)\lesssim n^{4/3}.$$

We choose a generic $U\times V$ in $Gr_2(\mathbb{C}^{n_1})\times Gr_2(\mathbb{C}^{n_2})$. Then as seen in previous steps, $F_{U\times V}$ is a non-Cartesian polynomial of degree $2$. 

By B\'ezout's theorem, $P=P_1|_U$ and $Q=Q_1|_V$ are finite sets, with $|P_1|=|Q_1|=n$. Similarly, $W=W_1|_{U\times V}$ is a finite set with $|W|=\deg (W_1)$. 

Since the constant in the intersection estimate for non-Cartesian polynomials on $\mathbb{C}^2\times \mathbb{C}^2$: 
$$|W|\lesssim n^{4/3}$$
depends only on the degree of the polynomial, the estimate we obtain here is independent of the choice of $U$ and $V$. 

\medskip

\noindent
\textbf{Step 8. Proof of consequence (ii).}  

Here we show the existence of $P_1,Q_1$ which realise the upper bound for $|W_1|$ in (i). To this end, we choose $P_0,Q_0$ as in Step 6, and for each $U\in Gr_2(\mathbb{C}^{n_1})$ and $V\in Gr_2(\mathbb{C}^{n_2})$ define $P_U=T_U^{-1}(P_0)$ and $Q_U=T_V^{-1}(Q_0)$ again as in Step 6. It is clear that we can arrange so that the map $(U,V)\in \Omega \mapsto (T_U,T_V)$ is smooth, and $\Omega $ is a product of Zariski open dense sets in $Gr_2(\mathbb{C}^{n_1)})$ and $Gr_2(\mathbb{C}^{n_2)}$. We can represent this as follows: 

Let $E=\{(U,V,u,v)\in Gr_2(\mathbb{C}^{n_1})\times Gr_2(\mathbb{C}^{n_2})\times \mathbb{C}^{n_1}\times \mathbb{C}^{n_2}: u\in U, v\in V\}$ (the universal bundle), and $\widehat{\Omega}=\{(U,V,u,v)\in \Omega \times \mathbb{C}^{n_1}\times \mathbb{C}^{n_2}: u\in U, v\in V\}$, then we have a smooth algebraic map $\mathcal{T}: \widehat{\Omega}\rightarrow \mathbb{C}^2$ such that $\mathcal{T}(U,V,u,v)=(T_Uu,T_Vv)$.

We let $\mathcal{R}=$ the closure of $\mathcal{T}^{-1}(P_0\times Q_0)$ in $E$. By definition: $\mathcal{R}\cap \widehat{\Omega}=\{(U,V,u,v): (U,V)\in \Omega,  u\in P_U, v\in Q_V\}$. It is easy to check that $\mathcal{R}=\mathcal{P}\times \mathcal{Q}$, for $\mathcal{P}\subset Gr_2(\mathbb{C}^{n_1})\times \mathbb{C}^{n_1}$ and $\mathcal{Q}=Gr_2(\mathbb{C}^{n_2})\times \mathbb{C}^{n_2}$, both having pure codimension $2$ and  
$$\deg (\mathcal{P})=|P_0|=n, \deg (\mathcal{Q})=|Q_0|=n.$$

Let $P_1\times Q_1$ be the image of $\mathcal{P}\times \mathcal{Q}$ under the natural projection $\pi :(Gr_2(\mathbb{C}^{n_1})\times \mathbb{C}^{n_1})\times (Gr_2(\mathbb{C}^{n_2})\times \mathbb{C}^{n_2})\rightarrow (\mathbb{C}^{n_1}\times \mathbb{C}^{n_2})$. Since $\pi$ is flat, it follows that $P_1$ and $Q_1$ are both of codimension $2$, and $\deg (P_1)=\deg (\mathcal{P})=|P_0|=n$, $\deg (Q_1)=\deg (\mathcal{Q})=|Q_0|=n$. Also, it is easy to check that for $(U,V)\in \Omega$, we have $P_1|_{U}=P_U$ and $Q_1|_V=Q_V$. 

Choose generic such $U,V$. Since $P_U\times Q_U$ realises the sharp intersection estimate for $F|_{U\times V}$, by B\'ezout's theorem we obtain that $P_1\times Q_1$ realises the sharp intersection estimate for $F$. 

The proof is complete.
\end{proof}

\begin{remark}
The statement above is a \emph{generic restriction} result. It does \emph{not} claim that
for every pair of $2$-planes $(U,V)$, the restriction $F|_{U\times V}$ is linearly
equivalent to the standard sharp model. For special choices of $U$ or $(U,V)$, the
restricted quadratic form may become degenerate, or the restricted bilinear form may drop
rank. The proposition asserts only that these bad phenomena occur on a proper algebraic
subset of
\[
Gr_2(\mathbb{C}^{n_1})\times Gr_2(\mathbb{C}^{n_2}).
\]
\end{remark}

\begin{remark}
If $n_2<n_1$, one may instead define
\[
F(x,y)=x_1^2+\cdots+x_{n_1}^2+1-2(x_1y_1+\cdots+x_{n_2}y_{n_2}),
\]
or, more invariantly, write
\[
F(x,y)=Q(x)+1-2B(x,y),
\]
where $Q$ is any nondegenerate quadratic form on $\mathbb{C}^{n_1}$ and $B$ is any
bilinear form on $\mathbb{C}^{n_1}\times \mathbb{C}^{n_2}$ having rank at least $2$.
The same proof shows that for a Zariski generic pair $(U,V)$ of $2$-planes, the
restriction $F|_{U\times V}$ is linearly equivalent to
\[
x^2+y^2+1-2ax-2by.
\]
\end{remark}

\subsubsection{An example on $\mathbb{C}^{n_1}\times \mathbb{C}^{n_2}$, of lower dimensional nature} In the spirit of Proposition \ref{prop:3block_lifting_lower}, one may ask whether a result similar to Proposition \ref{PropositionHigherDimensionalExampleGenericSliceGood} can be achieved, with a polynomial $F$ of a lower dimensional nature. Here we present one such example, whose proof is similar to that of  Proposition \ref{PropositionHigherDimensionalExampleGenericSliceGood} and hence is omitted. 

\begin{proposition}
Let $n_1,n_2\ge 2$. Let
\[
\pi:\mathbb{C}^{n_1}\to \mathbb{C}^2,
\qquad
\sigma:\mathbb{C}^{n_2}\to \mathbb{C}^2
\]
be surjective linear maps. Let
\[
f_0(u,v,a,b)=u^2+v^2+1-2au-2bv
\]
be the standard polynomial on $\mathbb{C}^2\times \mathbb{C}^2$.

Define
\[
F(x,y):=f_0\bigl(\pi(x),\sigma(y)\bigr).
\]

Then there exists a nonempty Zariski open subset
\[
\Omega \subset
Gr_2(\mathbb C^{n_1})
\times
Gr_2(\mathbb C^{n_2})
\]
such that for every $(U,V)\in \Omega$,  the sharp incidence exponent $4/3$ holds for
$F|_{U\times V}$ in the sense that there exist finite sets
\[
P\subset U,\qquad Q\subset V,\qquad |P|\sim |Q|\sim n,
\]
such that
\[
\bigl| Z(F|_{U\times V})\cap (P\times Q)\bigr| \sim n^{4/3}.
\]

As a consequence:  

i) For every $P_1\subset \mathbb{C}^{n_1}$ and $Q_1\subset \mathbb{C}^{n_2}$ effective algebraic cycles of pure codimension $2$ and $\deg (P_1)=\deg (Q_1)=n$, let $W_1$ be the union of components of $P_1\times Q_1$ which belong to the set $\{F=0\}$, then 
$$\deg (W_1)\lesssim n^{4/3}.$$

ii) There are $P_1\subset \mathbb{C}^{n_1}$ and $Q_1\subset \mathbb{C}^{n_2}$ effective algebraic cycles of pure codimension $2$ and $\deg (P_1)=\deg (Q_1)=n$, such that if $W_1$ is the union of components of $P_1\times Q_1$ which belong to the set $\{F=0\}$, then 
$$\deg (W_1)\sim n^{4/3}.$$
\end{proposition}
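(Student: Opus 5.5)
The plan is to reduce everything to Proposition \ref{PropositionHigherDimensionalExampleGenericSliceGood}, replacing the quadratic and bilinear forms there by the pulled-back forms. Write
\[
F(x,y)=Q_\pi(x)+1-2B_{\pi,\sigma}(x,y),\qquad Q_\pi(x)=|\pi(x)|^2,\quad B_{\pi,\sigma}(x,y)=\pi(x)\cdot\sigma(y),
\]
where $|w|^2=w_1^2+w_2^2$ and $\cdot$ is the standard symmetric bilinear pairing on $\mathbb{C}^2$. For a $2$-plane $U\subset\mathbb{C}^{n_1}$ the restriction $\pi|_U:U\to\mathbb{C}^2$ is an isomorphism exactly when $U\cap\ker\pi=0$. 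This is a nonempty Zariski open condition on $Gr_2(\mathbb{C}^{n_1})$, since $\ker\pi$ has codimension $2$ and any complement of it works. Likewise, $\sigma|_V$ is an isomorphism on a nonempty Zariski open subset of $Gr_2(\mathbb{C}^{n_2})$. We let $\Omega$ be the product of these two open sets.

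For $(U,V)\in\Omega$ we set $T_U=\pi|_U$ and $T_V=\sigma|_V$. Then $F|_{U\times V}(u,v)=f_0(T_Uu,T_Vv)$ identically, so $F|_{U\times V}$ is linearly equivalent to $f_0$. This step is even more direct than Step 5 of the previous proof, because no diagonalisation is needed. The sharp example for $f_0$ built in the $\mathbb{C}^2\times\mathbb{C}^2$ subsection (with $Q_0=A\times B$ and $P_0$ obtained by solving the quadratic \eqref{eq:quadratic_y}) has $s^4\sim n^{4/3}$ incidences. Transporting it by $T_U^{-1}$ and $T_V^{-1}$ gives the finite sets $P\subset U$ and $Q\subset V$, exactly as in Step 6.

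For consequence (i), we argue as in Step 7 and in the proof of Theorem \ref{Theorem2}. We take $(U,V)\in\Omega$ generic, so that $P_1\cap U$ and $Q_1\cap V$ are finite of cardinality $n$ and $W_1\cap(U\times V)$ has cardinality $\deg W_1$, by B\'ezout and genericity. The restriction $F|_{U\times V}\simeq f_0$ is non-Cartesian by the Claim in the proof of Proposition \ref{Proposition1}: there $R=1-a^2-b^2$, which is irreducible, so it is not divisible by any $a\pm ib-c$. Then the $\mathbb{C}^2\times\mathbb{C}^2$ bound $\lesssim n^{4/3}$ applies, with a constant depending only on the degree $2$.

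For (ii) there is a simpler option than the universal-bundle construction of Step 8, thanks to the lower-dimensional structure. We take $P_1=\pi^{-1}(P_0)$ and $Q_1=\sigma^{-1}(Q_0)$, which are unions of $n$ parallel affine subspaces of codimension $2$, so each has degree $n$. Since $F$ factors through $\pi\times\sigma$, a component $\pi^{-1}(p)\times\sigma^{-1}(q)$ lies in $\{F=0\}$ if and only if $f_0(p,q)=0$. Hence $\deg W_1=|Z(f_0)\cap(P_0\times Q_0)|\sim n^{4/3}$.

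The only points needing care are the B\'ezout/genericity bookkeeping in (i), which is handled as in Theorem \ref{Theorem2}, and the check that the components counted in (ii) are exactly the products of fibres. I expect (i) to be the main obstacle, because the degree of $W_1$ must be compared with a slice count. To do this I would first choose $U$ and $V$ transverse to all components of $P_1$, $Q_1$ and $W_1$, and only then intersect the result with $\Omega$.
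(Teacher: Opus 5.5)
Your proposal is correct in substance. For the main assertion and for (i) it follows the route the paper intends: the paper omits this proof as similar to that of Proposition~\ref{PropositionHigherDimensionalExampleGenericSliceGood}. Your choice $\Omega=\{U\cap\ker\pi=0\}\times\{V\cap\ker\sigma=0\}$, with $T_U=\pi|_U$ and $T_V=\sigma|_V$, simply makes Steps~2--5 of that proof tautological.

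The genuine difference is in (ii). The paper's template builds $\mathcal P,\mathcal Q$ over the Grassmannians via the universal bundle and pushes them down by a projection asserted to be flat. You instead use the factorization $F=f_0\circ(\pi\times\sigma)$ and take $P_1=\pi^{-1}(P_0)$, $Q_1=\sigma^{-1}(Q_0)$. The components of $P_1\times Q_1$ are the codimension-$4$ affine spaces $\pi^{-1}(p)\times\sigma^{-1}(q)$, on which $F\equiv f_0(p,q)$. So you get the exact identity $\deg W_1=|Z(f_0)\cap(P_0\times Q_0)|$, with no B\'ezout, genericity or flatness input.

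This is more elementary and more robust than Step~8. There, $\bigcup_U T_U^{-1}(P_0)$ has codimension $2$ only if the $T_U$ vary coherently with $U$. Your $T_U=\pi|_U$ is exactly such a coherent choice, with union $\pi^{-1}(P_0)$. The paper's route buys uniformity with the genuinely generic example, where no global factorization exists. Yours exploits precisely the lower-dimensional structure this proposition is meant to illustrate. To get $\deg P_1=\deg Q_1$ exactly, pad $P_0$ (which has $s^3$ points, against $2s^3$ in $Q_0$) with $s^3$ further points.

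One point in (i) needs repair. The same issue affects Step~7 of Proposition~\ref{PropositionHigherDimensionalExampleGenericSliceGood} and the end of the proof of Theorem~\ref{Theorem2}. Slicing by $(U,V)\in Gr_2\times Gr_2$, i.e.\ by planes through the origin, does not compute degrees. For $n_1\ge 3$, let $P_1$ be a union of $n$ codimension-$2$ linear subspaces through $0$. Then $P_1\cap U=\{0\}$ for generic $U$, so neither the cardinality $n$ nor the transversality you plan to impose can be achieved inside $\Omega$.

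Slice instead by generic affine planes $(x_0+U)\times(y_0+V)$ with $(U,V)\in\Omega$. This costs nothing here: the restriction is $f_0(\pi x_0+T_Uu,\,\sigma y_0+T_Vv)$, which is still non-Cartesian, since the Cartesian property is invariant under affine automorphisms of each factor.

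Also, $U\times V$ is not a generic $4$-plane, so derive $|W_1\cap((x_0+U)\times(y_0+V))|=\deg W_1$ from the product structure rather than from B\'ezout in $\mathbb{C}^{n_1+n_2}$. Each component of $W_1$ is $A\times B$ with $A,B$ components of $P_1,Q_1$. Its slice is $(A\cap(x_0+U))\times(B\cap(y_0+V))$, of size $\deg A\cdot\deg B=\deg(A\times B)$. Distinct components have disjoint slices for generic planes, because $A\cap A'$ (or $B\cap B'$) has codimension at least $3$.
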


\section{Acknowledgments}

 The first author was supported by the National Science and Technology Council (NSTC) under Grant No.~111-2115-M-002-010-MY5. This paper has been initiated while the second author was visiting National Taiwan University in August 2025. He would like to thank the instution for hospitality and support and also thanks University of Oslo for support. He would also like to thank Keiji Oguiso and his students for some inspiring discussions on this topic.


\begin{thebibliography}{99}

\bibitem{Barakat-Hegermann}
M. Barakat and M. Lange-Hegermann,
\newblock An algorithmic approach to Chevalley's theorem on images of rational morphisms between affine varieties,
\newblock \emph{Mathematics of Computation}, 91(333):451--490, 2021.

\bibitem{Brownawell}
W.~D.~Brownawell,
\newblock Bounds for the degrees in the Nullstellensatz,
\newblock \emph{Annals of Mathematics}, 126(3):577--591, 1987.

\bibitem{BukhTsimerman}
B.~Bukh and J.~Tsimerman,
\newblock Sum-product estimates for rational functions,
\newblock \emph{Proceedings of the London Mathematical Society}, 104(1):1--26, 2012.

\bibitem{CoxLittleOShea}
D.~Cox, J.~Little, and D.~O'Shea,
\newblock \emph{Ideals, Varieties, and Algorithms: An Introduction to Computational Algebraic Geometry and Commutative Algebra},
\newblock Springer, New York, 4th edition, 2015.

\bibitem{Eisenbud}
D.~Eisenbud,
\newblock \emph{Commutative Algebra with a View Toward Algebraic Geometry},
\newblock Springer, New York, 1995.

\bibitem{ElekesRonyai}
G.~Elekes and L.~R\'onyai,
\newblock A combinatorial problem on polynomials and rational functions,
\newblock \emph{Journal of Combinatorial Theory, Series A}, 89(1):1--20, 2000.



\bibitem{GuthKatz}
L.~Guth and N.~H.~Katz,
\newblock On the Erd\H{o}s distinct distances problem in the plane,
\newblock \emph{Annals of Mathematics}, 181(1):155--190, 2015.

\bibitem{Hartshorne}
R.~Hartshorne,
\newblock \emph{Algebraic Geometry},
\newblock Springer, New York, 1977.

\bibitem{KaplanMatousekSharir}
H.~Kaplan, J.~Matou\v{s}ek, and M.~Sharir,
\newblock Simple proofs of classical theorems in discrete geometry via the Guth--Katz polynomial partitioning technique,
\newblock \emph{Discrete \& Computational Geometry}, 48(3):499--517, 2012.

\bibitem{Kollar}
J.~Koll\'ar,
\newblock Sharp effective Nullstellensatz,
\newblock \emph{Journal of the American Mathematical Society}, 1(4):963--975, 1988.

\bibitem{MojarradPhamValculescuZeeuw}
H.~N.~Mojarrad, T.~Pham, C.~Valculescu, and F.~de~Zeeuw,
\newblock Schwartz--Zippel bounds for two-dimensional products,
\newblock \emph{Discrete Analysis}, 2017:20, 1--20, 2017.

\bibitem{PachSharir}
J.~Pach and M.~Sharir,
\newblock On the number of incidences between points and curves,
\newblock \emph{Combinatorics, Probability and Computing}, 7(1):121--127, 1998.

\bibitem{RazSharirZeeuw}
O.~E.~Raz, M.~Sharir, and F.~de~Zeeuw,
\newblock Polynomials vanishing on Cartesian products: The Elekes--Szab\'o theorem revisited,
\newblock \emph{Duke Mathematical Journal}, 165(18):3517--3566, 2016.

\bibitem{SharirSolomon}
M.~Sharir and N.~Solomon,
\newblock Incidences between points and lines in $\mathbb{R}^4$,
\newblock \emph{Discrete \& Computational Geometry}, 57(3):702--756, 2017.

\bibitem{SharirZahl}
M.~Sharir and J.~Zahl,
\newblock Cutting algebraic curves into pseudo-segments and applications,
\newblock \emph{Journal of Combinatorial Theory, Series A}, 150:1--35, 2017.

\bibitem{SolymosiTao}
J.~Solymosi and T.~Tao,
\newblock An incidence theorem in higher dimensions,
\newblock \emph{Discrete \& Computational Geometry}, 48(2):255--280, 2012.

\bibitem{SolymosiZeeuw}
J.~Solymosi and F.~de~Zeeuw,
\newblock Incidence bounds for complex algebraic curves on Cartesian products,
\newblock in \emph{New Trends in Intuitive Geometry},
\newblock Bolyai Society Mathematical Studies, vol.~27, Springer, Berlin, 2018, pp.~385--405.

\bibitem{Sombra}
M.~Sombra,
\newblock A sparse effective nullstellensatz,
\newblock in \emph{Advances in Applied Mathematics}, 22 (2): 271--295, 1999. 

\bibitem{TaoVu}
T.~Tao and V.~Vu,
\newblock \emph{Additive Combinatorics},
\newblock Cambridge University Press, Cambridge, 2006.

\bibitem{Zahl}
J.~Zahl,
\newblock An improved bound on the number of point-surface incidences in three dimensions,
\newblock \emph{Contributions to Discrete Mathematics}, 8(1):100--121, 2013.

\end{thebibliography}
\end{document}